\newtheorem{theorem}{Theorem}[section]
\newtheorem{lemma}[theorem]{Lemma}
\newtheorem{corollary}[theorem]{Corollary}
\theoremstyle{definition}
\newtheorem{remark}[theorem]{Remark}
\newtheorem*{ack}{Acknowledgement}
\newenvironment{romenumerate}{\begin{enumerate}
 }{\end{enumerate}}
\newcommand\noproof{\hfill$\Box$}
\newcommand\Bi{{\mathrm{Bi}}}
\newcommand\eps{\varepsilon}
\newcommand\la{\lambda}
\renewcommand\Pr{{\mathbb P}}
\newcommand\E{{\mathbb E}}
\newcommand\Var{\operatorname{\mathrm{Var}}}
\newcommand\Cov{\operatorname{\mathrm{Cov}}}
\newcommand\dto{\overset{\mathrm{d}}{\to}}
\newcommand\tX{{\widetilde X}}
\newcommand\op{o_{\mathrm{p}}}
\newcommand\Op{O_{\mathrm{p}}}
\newcommand\cF{\mathcal{F}}
\newcommand\cA{\mathcal{A}}
\newcommand\cB{\mathcal{B}}
\newcommand\cC{\mathcal{C}}
\newcommand\cE{\mathcal{E}}
\newcommand\dmax{d_{\mathrm{max}}}
\newcommand\bb[1]{\bigl(#1\bigr)}
\newcommand\vd{{\bf d}}
\newcommand\mm{{\mathrm{m}}}
\newcommand\dd{\mathrm{d}}
\newcommand\pP{\mathcal{P}}
\newcommand\xdot{\dot{x}}
\newcommand\tmax{t_{\mathrm{max}}}
\newcommand\bigabs[1]{\bigl|#1\bigr|}
\newcommand\floor[1]{\lfloor #1 \rfloor}
\newcommand\Bern{\mathrm{Bern}}
\newcommand\bI{{\bf I}}
\newcommand\bW{{\bf W}}
\newcommand\La{\Lambda}
\newcommand\dtwo{c_0}
\newcommand\alphas{{\alpha_0,\alpha_1,\alpha_2}}
\newcommand\R{\mathbb{R}}
\newcommand\newbe{\theta}
\begin{document}
\title{The phase transition in the configuration model}
\author{Oliver Riordan%
\thanks{Mathematical Institute, University of Oxford, 24--29 St Giles', Oxford OX1 3LB, UK and
Department of Mathematical Sciences, University of Memphis, Memphis TN 38152, USA.
E-mail: {\tt riordan@maths.ox.ac.uk}.}}
\date{April 4, 2011}
\maketitle

\begin{abstract}
Let $G=G(\vd)$ be a random graph with a given degree sequence $\vd$, such as a random
$r$-regular graph where $r\ge 3$ is fixed and $n=|G|\to\infty$.
We study the percolation phase transition on such graphs $G$,
i.e., the emergence as $p$ increases of a unique giant component
in the random subgraph $G[p]$
obtained by keeping edges independently with probability $p$.
More generally, we study the emergence
of a giant component in $G(\vd)$ itself as $\vd$ varies.
We show that a single method can be used to prove very
precise results below, inside and above the `scaling window'
of the phase transition,
matching many of the known results for the much simpler model $G(n,p)$.
This method is a natural extension of that used by Bollob\'as
and the author to study $G(n,p)$, itself
based on work of Aldous and of Nachmias and Peres; 
the calculations are significantly more involved in the present setting.
\end{abstract}

\section{Introduction and results}\label{sec_intro}

In 1997, Aldous showed that inside the `scaling window' of the phase
transition, i.e., when $p=(1+\alpha n^{-1/3})/n$ with $\alpha=O(1)$,
the rescaled sizes of the largest components of $G(n,p)$ converge
to a certain distribution related to Brownian motion.
His proof was based on a natural exploration process, introduced
in the context of random graphs by Karp~\cite{Karp}
and considered in a closely related context a little
earlier by Martin-L\"of~\cite{ML86}, but with a twist: after
finishing exploring a component one starts exploring the next component
in such a way that a certain quantity related to the exploration
behaves very much like a random walk with independent increments.

Recently, Nachmias and Peres~\cite{NP_giant} used the same
process (with the same `restarts') to study $G(n,p)$ \emph{outside}
the scaling window, giving a simpler proof of somewhat weaker
forms of known results for this case; in~\cite{BR_walk}
Bollob\'as and the author showed that with a little more work,
much stronger results
could be proved by analyzing the same exploration process in a new way.

Nachmias and Peres~\cite{NP_regular} adapted
their approach to study the phase transition in random $r$-regular
graphs, i.e., the emergence of a giant component in the subgraph
of a random $r$-regular graph obtained by selecting edges
with probability $p$; they showed in particular that the
`window' is when $p=(1+\Theta(n^{-1/3}))/(r-1)$.  Independently,
Janson and Luczak~\cite{JL_new} used a different approach to prove
the supercritical part of this result in a more general context.

In this paper we shall extend and generalize the results mentioned
in the previous paragraph.
Firstly, rather than a random subgraph of a random regular graph,
we study the `configuration model' of Bollob\'as~\cite{BB_config}, 
giving a random (simple or multi-)graph with a given (here bounded) degree sequence;
it is easy to see that random subgraphs of random regular
graphs can be viewed in this way (see Fountoulakis~\cite{F_percd}). Secondly, we prove
much more precise results, obtaining essentially the full
strength of the corresponding results for $G(n,p)$.
In particular, we show that above the window the size
of the giant component is asymptotically normally distributed,
corresponding to the result of Pittel and Wormald~\cite{PWio}
for $G(n,p)$, and prove results equivalent to those of Aldous~\cite{Aldous}
in the critical case. The approach used here is very much that
of Bollob\'as and the author in~\cite{BR_walk}, adapted to the configuration
model.

Throughout we assume that we are `near' the phase transition, in the
range corresponding to $np\to 1$ in $G(n,p)$. Perhaps surprisingly,
the proofs are easier the closer the graphs are to critical.  The
method used here could probably be extended to the more strongly
supercritical case (covered for $G(n,p)$ in~\cite{BR_walk}), but
further away from the phase transition other approaches (based on
studying small components) are likely to be simpler.  We assume here
that the maximum degree remains bounded; this assumption can doubtless
be weakened.

Before turning to the details, let us comment briefly on the history
of this problem.  The existence and size of the giant component in the
configuration model were first studied by Molloy and
Reed~\cite{MolloyReed1,MolloyReed2}, who found
the size of the largest component up to a $o(n)$ error (not only near
the phase transition, of course).  It is
easy to check that a random \emph{subgraph} of a random graph generated
by the configuration model is again an instance of the configuration
model.  Hence, studying the percolation phase transition in the
(random) environment of the configuration model reduces to studying
the transition in the configuration model itself as its parameters are
varied. (This is spelled out in detail by Fountoulakis~\cite{F_percd};
see also Janson~\cite{Janson_cperc}.)
Nevertheless, percolation on random $r$-regular graphs has
received separate attention; for many proof techniques this special case is much easier
to handle. In this special case, the critical point of the phase
transition was established explicitly by Goerdt~\cite{Goerdt},
and Benjamini raised the question
of finding the `window' of the phase transition (see~\cite{Pittel_rrg,NP_regular}).
Results establishing the approximate width (either for this special case
or more generally for the configuration model itself)
were given recently by Kang and Seierstad~\cite{KS},
Pittel~\cite{Pittel_rrg} and Janson and Luczak~\cite{JL_new}, in all
cases with logarithmic gaps in the bounds. The exact width of the window,
and the asymptotic size of the largest component in all ranges, was found
by Nachmias and Peres~\cite{NP_regular}, but only for the case of random
subgraphs of random $r$-regular graphs. As mentioned above, here we not only
extend this result to the configuration model, but greatly improve the precision,
establishing not only the asymptotic size of the largest component
above and below the window, but also the scale and limiting distribution
of its fluctuations.

\medskip
Let $\vd=\vd_n=(d_1^{(n)},\ldots,d_n^{(n)})$ be a \emph{degree sequence}, i.e., a sequence
of non-negative integers with even sum.
For the moment we assume only that all degrees are at most
some constant $\dmax\ge 2$. In the following results the sequence of course depends on
$n$, but often we suppress this in the notation, writing $d_i$ for the degree of vertex $i$,
for example.

Let $G_{\vd}^\mm$ be the \emph{configuration multigraph} with degree sequence $\vd$,
introduced by Bollob\'as~\cite{BB_config},
defined as follows.
Let $S_1,\ldots,S_n$ be disjoint sets with $|S_i|=d_i$; we call
the elements of the $S_i$ \emph{stubs}. Let $\pP$ be a pairing
of $\bigcup S_i$, i.e., a partition of $\bigcup S_i$ into parts of size 2, chosen
uniformly at random from all such pairings. Form the multigraph $G_{\vd}^\mm$ from $\pP$
by replacing each pair $\{s,t\}$ with $s\in S_i$ and $t\in S_j$ by an edge with endvertices $i$ and $j$.

Let us write $L_i(G)$ for the number of vertices in the $i$th largest (when sorted by number of vertices)
component of a graph $G$, noting that the definition is unambiguous even if there
are ties in the component sizes.
Our main aim is to study the distribution of $L_1(G_{\vd}^\mm)$.

When it comes to asymptotic results, we shall always make the following two assumptions.
Firstly, there is a constant $\dmax$ such that all degrees satisfy
\begin{equation}\label{a1}
 d_i^{(n)} \le \dmax.
\end{equation}
Secondly, there is a constant $\dtwo>0$ such that
\begin{equation}\label{a2}
 |\{i:d_i^{(n)} \notin \{0,2\}\}|\ge \dtwo n
\end{equation}
for all large enough $n$.
There are two aspects to this second condition. Firstly, degree-$0$
vertices play no role in the construction, so we could just as 
well rule them out. However, in applications we shall consider
sequences containing some zeros, and to avoid an extra rescaling step
(where $n$ is replaced by the number of non-zero degrees), it
is convenient to allow them.

The situation with degree-2 vertices is somewhat similar: apart from
the possibility of cycle components, the multigraph $G_{\vd}^\mm$ is a
random subdivision of the multigraph $G_{\vd'}^\mm$, where $\vd'$ is
obtained by deleting all degree-2 vertices from $\vd$. For the scaling
behaviour, it is the number of (non-isolated) vertices in $\vd'$ that
matters, not the number in $\vd$. The condition \eqref{a2} ensures
that $n$ is (up to a constant) the correct scaling parameter
in conditions such as $\eps^3n\to\infty$ below.

Given a degree sequence $\vd$, let
\begin{equation}\label{mudef}
 \mu_r =\mu_r(\vd) = n^{-1}\sum_{i=1}^n (d_i)_r
\end{equation}
denote the $r$th \emph{factorial moment} of $\vd$,
where $(x)_r=x(x-1)\cdots(x-r+1)$.
Let
\begin{equation}\label{ladef}
 \la = \la(\vd) = \frac{\sum d_i(d_i-1)}{\sum d_i} = \frac{\mu_2}{\mu_1}.
\end{equation}
As is well known (see, e.g., \cite{MolloyReed1,rg_bp}),
the quantity $\la$ corresponds to the average `branching factor' in $G_{\vd}^\mm$.
Our main interest is the `weakly supercritical' case where $\la\to 1$ from above,
but $(\la-1)n^{1/3}\to\infty$, so we are outside the `scaling window'
of the phase transition. However, we shall also prove results for the critical
and weakly subcritical cases.

We usually write $\la=\la(n)$ as $1+\eps(n)$, and assume throughout that
\begin{equation}\label{a3}
 \la=\frac{\mu_2}{\mu_1}\to 1
\end{equation}
as $n\to\infty$, i.e., that $\eps\to 0$.

Let $\eta=\eta(n)$ denote the random variable obtained by choosing an element
of $\vd$ at random, with each element chosen with probability proportional to its
value. Thus $\la=\E(\eta-1)$ and $\eps=\E(\eta-2)$.
The quantity
\begin{equation}\label{v0def}
 v_0 = \Var(\eta)=\Var(\eta-2)
\end{equation}
will play an important role in our results. An elementary calculation shows that
\[
 v_0 = \frac{\mu_3\mu_1+\mu_2\mu_1-\mu_2^2}{\mu_1^2}.
\]
Under our assumptions we have $\E(\eta-2)\to 0$ and, from \eqref{a2}, $\sup\Pr(\eta=2)<1$,
which implies that $\Var(\eta-2)=\Theta(1)$. Since $\mu_2/\mu_1\to 1$, it follows that
\begin{equation}\label{vo1}
 v_0 \sim \frac{\mu_3}{\mu_1} =\Theta(1)
\end{equation}
as $n\to\infty$.

Writing $p_d=p_d(n)=n^{-1}|\{i:d_i=d\}|$ for the fraction of vertices with degree $d$,
for $\la>1$, let $z$ be the smallest solution in $[0,1]$ to the equation
\begin{equation}\label{prerho}
 z = \sum_d z^{d-1} \frac{d p_d}{\mu_1},
\end{equation}
and set
\begin{equation}\label{rhodef}
 \rho=\rho(\vd)=1- \sum_d z^d p_d.
\end{equation}
This quantity is most naturally seen as the survival probability
of a certain branching process (see \cite{rg_bp}; $z$ is essentially the probability
that when we follow a random edge, we end up in a small component).
What we call $\rho$ here is exactly the 
quantity $\epsilon_{\cal D}$ appearing in the result of Molloy and Reed~\cite{MolloyReed2}.
Let
\begin{equation}\label{rho*}
 \rho^*=\rho^*(\vd) = \sum_d z^d p_d -\frac{\mu_1 z^2}{2} -1 + \frac{\mu_1}{2};
\end{equation}
it will turn out that $\rho^*n$ will give asymptotically the nullity (or excess) of the
giant component. Although we shall not prove this, this is also asymptotically
the number of vertices in the `kernel'.

We shall show later (see Lemma~\ref{traj}\ref{v} and \eqref{rho*sim}) that
\begin{equation}\label{rhosim}
 \rho\sim\frac{2\mu_1^2}{\mu_3}\eps \text{\ \ and\ \ }\rho^*\sim \frac{2\mu_1^3}{3\mu_3^2}\eps^3.
\end{equation}
Molloy and Reed~\cite{MolloyReed2} showed, under different
conditions, that $L_1(G_{\vd}^\mm)=\rho(\vd) n+\op(n)$. 
(Their result allowed much larger degrees, and they did not assume that $\eps\to 0$;
indeed, their result does not `bite' in this case.)
Our main result concerns the fluctuations of $L_1(G_{\vd}^\mm)$ around $\rho(\vd)n$
in the weakly supercritical case.
\begin{theorem}\label{th1}
Let $\dmax\ge 2$ and $\dtwo>0$ be fixed. For each $n$ let $\vd=\vd_n$ be a degree sequence
satisfying \eqref{a1} and \eqref{a2}. Define $\mu_i$, $\la$, $\rho$ and $\rho^*$ as above,
noting that these quantities depend on $n$.
Setting $\eps=\la-1$, suppose that $\eps\to 0$ and $\eps^3 n\to\infty$.
Let $L_1$ and $N_1$ denote the order and nullity of the largest
component of $G_{\vd}^\mm$. Then $L_1'=L_1-\rho n$ and $N_1'=N_1-\rho^* n$
are asymptotically jointly normally distributed with mean 0,
\[
 \Var(L_1')\sim 2\mu_1\eps^{-1}n,\text{\ \ }\Var(N_1')\sim 5\rho^*n\sim \frac{10\mu_1^3}{3\mu_3^2}\eps^3n,
\text{\ and\ }\Cov(L_1',N_1')\sim \frac{2\mu_1^2}{\mu_3}\eps n.
\]
Furthermore,
\begin{equation}\label{12}
 L_2(G_{\vd}^\mm) = \Op\bb{\eps^{-2}\log(\eps ^3n)}.
\end{equation}
\end{theorem}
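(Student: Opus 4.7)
The plan is to use the exploration-process-with-restarts strategy introduced by Aldous and refined in~\cite{BR_walk}, adapted to the pairing $\pP$ defining $G_{\vd}^\mm$. Concretely, I would define a random walk $S(t)$ in which, at each step $t$, one ``active'' stub is paired with a uniformly random other free stub. If the partner belongs to an unexplored vertex $v$ of degree $d_v$, then the number of active stubs changes by $d_v-2$; if the partner is itself active, it changes by $-2$ and a back-edge is recorded. When the active set would hit zero, one restarts from a new vertex chosen with probability proportional to its degree. Component orders are then read off as excursion lengths of the reflected walk $S-\min S$, and excesses (nullities) as the counts of back-edge events inside each excursion. I would track the pair $(S(t),B(t))$, where $B(t)$ is the running total of back-edges, throughout.

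First I would show, via the LLN / trajectory method of~\cite{BR_walk} (this is the content of the preparatory Lemma~\ref{traj} that the theorem statement already cites), that $S(\lfloor tn\rfloor)/n$ converges uniformly to a deterministic function $s(t)$ whose first positive zero $t^*$ satisfies $t^*\to\rho$, with $s'(t^*)$ computable from the size-biased distribution of $\eta$. Combining this with a martingale CLT applied to the martingale part of $S$, one obtains that near $t^*n$ the walk $S(\cdot)-n s(\cdot/n)$ has Gaussian fluctuations of order $\sqrt n$ with diffusion coefficient determined by $v_0$. As in~\cite{BR_walk}, the giant excursion ends essentially at the first return of $S$ to its previous running minimum after time $t^*n$, and a Taylor expansion around the zero crossing converts the Gaussian value of $S$ at $t^*n$ into the Gaussian fluctuation of $L_1'$; plugging $s'(t^*)$ and \eqref{vo1} into this yields $\Var(L_1')\sim 2\mu_1\eps^{-1}n$.

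Next I would treat $N_1$ jointly with $L_1$. Given $\cF_{t-1}$, the probability that step $t$ produces a back-edge is approximately $S(t-1)/(\mu_1 n)$, so writing $B$ as conditional mean plus martingale decomposes $N_1'$ into: a deterministic piece accounting for the mean $\rho^* n$ (producing the second asymptotic in \eqref{rhosim}); a pure martingale term whose quadratic variation is $\sim\rho^* n$, giving $\Var(N_1')\sim 5\rho^* n$ once one combines the martingale CLT with the additional variance picked up from the $L_1'$-coupling (this factor of $5$ will fall out of the explicit covariance calculation, paralleling the $G(n,p)$ computation); and a linear functional of $S$ on $[0,t^*n]$ that couples $B$ to $L_1'$, producing the covariance $\sim (2\mu_1^2/\mu_3)\eps n$. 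The joint martingale CLT then delivers the bivariate normal limit with the stated covariance matrix.

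Finally, for \eqref{12} I would argue, as in~\cite{BR_walk,NP_giant}, that once the giant excursion has ended the walk has drifted far below its running minimum, so any subsequent excursion of length $\ell$ requires the size-biased random walk (with drift $-\Theta(\eps)$ and bounded variance) to remain nonnegative for $\ell$ steps; a Cram\'er-type exponential bound makes this probability $o(1)$ once $\ell=C\eps^{-2}\log(\eps^3n)$ with $C$ large, and a union bound over $O(n)$ starting times finishes the job. The main obstacle will be controlling the ``restart'' corrections: as stubs are consumed the conditional distribution of the degree of the next revealed vertex drifts away from the pure size-biased law, and one needs this drift to be negligible uniformly on $[0,t^*n]$ simultaneously for the drift, quadratic variation, and joint covariance computations. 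This is exactly the place where the argument requires the more careful bookkeeping of~\cite{BR_walk}, rather than the cruder estimates sufficient for \cite{NP_giant}, and is where most of the technical work of the paper will be located.
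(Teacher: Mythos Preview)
Your overall strategy matches the paper's closely: exploration with restarts, comparison to a deterministic trajectory whose first positive zero is $\rho$, a martingale CLT for the fluctuations of the walk, a Taylor expansion at the zero crossing to convert the Gaussian value of the walk into the Gaussian fluctuation of $L_1$, and a joint treatment of the back-edge process to obtain the nullity statement with the stated covariance. Two points are worth flagging.

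\emph{Minor difference in parameterization.} You explore one \emph{stub} per step, so excursion lengths count edges, not vertices; the paper instead reveals one \emph{vertex} per step (with an explicit back-edge check), so that excursion lengths are literally component orders and the back-edge probability involves $A_t$ rather than the walk value. This is only a bookkeeping difference, but it does force you to translate between edge-counts and vertex-counts at the end, and to replace $S(t-1)$ by the number of active stubs in your back-edge rate; the paper handles the analogous issue via $|\tX_t-A_t|=o(\sqrt{\eps n})$.

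\emph{A genuine gap in the $L_2$ argument.} Your plan ``Cram\'er-type exponential bound plus a union bound over $O(n)$ starting times'' does not close. A pure large-deviation bound gives $\Pr(\text{one excursion}\ge\ell)\le e^{-c\eps^2\ell}=(\eps^3 n)^{-cC}$ for $\ell=C\eps^{-2}\log(\eps^3 n)$, and the union over $n$ yields $n(\eps^3 n)^{-cC}$, which need \emph{not} tend to $0$ when $\eps^3 n$ grows slowly (e.g.\ like a power of $\log n$): no fixed $C$ kills the factor $n$. What is missing is the $t^{-3/2}$ polynomial decay of random-walk first-passage probabilities (Spitzer/ballot problem combined with a local limit theorem), which upgrades the single-excursion bound to $\Theta(\eps^{-2})\ell^{-3/2}e^{-c\eps^2\ell}$ and makes the first-moment calculation go through. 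The paper sidesteps this entirely: after the giant excursion it observes that the unexplored part is itself a configuration multigraph whose branching parameter is $1-\eps+o_p(\eps)$, and applies the subcritical Theorem~\ref{th-1} to that residual sequence to obtain \eqref{12}. Either fix --- inserting the $t^{-3/2}$ factor, or invoking the subcritical result on the residual --- works, but the argument as you wrote it does not.
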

Here, as usual, given a sequence $(Z_n)$ of random variables 
and a deterministic function $f(n)$,
$Z_n=\Op(f(n))$ means that $Z_n/f(n)$ is bounded in probability, i.e.,
for any $\delta>0$ there exists $C$ such that $\Pr(|Z_n|\le Cf(n))\ge 1-\delta$
for all (large enough) $n$. We say that an event (formally
a sequence of events) holds \emph{with high probability} or \emph{whp}
if its probability tends to $1$ as $n\to\infty$. We write $Z_n=\op(f(n))$ if $Z_n/f(n)$ converges to $0$
in probability, i.e., if for any $\delta>0$ we have $|Z_n|\le \delta f(n)$ whp.

We assume a bounded maximum degree for simplicity. The proof extends
to the case where the maximum degree grows reasonably slowly; we have
not investigated this further.

The asymptotic correlation coefficient $\Cov(L_1',N_1')/\sqrt{\Var(L_1')\Var(N_1')}$
given by Theorem~\ref{th1}
is simply $\sqrt{3/5}$. Although this case is not covered by our result,
if we take the degree distribution to be Poisson as in $G(n,p)$ then
when $p\sim 1/n$ we have $\mu_i\sim 1$
for all $i$, and the variance and covariance formulae above are consistent
with those given by Pittel and Wormald~\cite[Note 4]{PWio} for $G(n,p)$.

The proof of Theorem~\ref{th1} will show that for each $d$ the number $L_1(d)$
of degree-$d$ vertices in the largest component satisfies
\begin{equation}\label{Ld}
 L_1(d) = n_d(1-z^d) +\Op(\sqrt{n/\eps}),
\end{equation}
where $n_d=np_d$
is the total number of degree-$d$ vertices and $z$ is defined by \eqref{prerho}.
The parameter $z$ corresponds to $\xi$ in \cite{JL_new},
so \eqref{Ld} refines the results there. Our method would allow us to establish
joint normality of these numbers with variances and covariances of order $n/\eps$,
but we shall not give the details.

We next consider the subcritical case.
Writing, as before,
$p_d=p_d(n)$ for the proportion of vertices with degree $d$, let $q_d=dp_d/\mu_1=\Pr(\eta=d)$
be the corresponding size-biased distribution.
Let $a_n$ be the (unique -- see Section~\ref{sec_below}) solution to
$\sum (d-2) q_d e^{a_n(d-2)}=0$, and define
\begin{equation}\label{dldef}
 \delta_n = -\log\left(\sum_d q_d e^{a_n(d-2)}\right).
\end{equation}
\begin{theorem}\label{th-1}
Let $\dmax\ge 2$ and $\dtwo>0$ be fixed. For each $n$ let $\vd=\vd_n$ be a degree sequence
satisfying \eqref{a1} and \eqref{a2}. Define $\mu_i$ and $\la$
as above, noting that these quantities depend on $n$.
Setting $\eps=1-\la$, suppose that $\eps\to 0$ and $\eps^3 n\to\infty$.
Then for all $x \in \R$ we have 
\begin{equation}\label{L1-+}
 \Pr\left( L_1(G_{\vd}^\mm) \le \delta_n^{-1}(\log\La -\frac{5}{2}\log\log\La +x)\right) = \exp\bb{-ce^{-x}} +o(1),
\end{equation}
where $\La=\eps^3 n$, $c=c(\vd)=\Theta(1)$ is given by \eqref{cform}, and $\delta_n$, defined in \eqref{dldef}, satisfies
\begin{equation}\label{dlsim}
 \delta_n \sim \frac{\eps^2}{2v_0}\sim \frac{\eps^2 \mu_1}{2\mu_3}.
\end{equation}
In particular,
\begin{equation}\label{L1-}
 L_1(G_{\vd}^\mm) = \delta_n^{-1}(\log\La -\frac{5}{2}\log\log\La +\Op(1)),
\end{equation}
and $L_1(G_{\vd}^\mm) = (2\mu_3/\mu_1+\op(1))\eps^{-2}\log \La$.
\end{theorem}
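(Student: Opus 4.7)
The plan is to analyse the standard exploration of $G_{\vd}^\mm$, in which stubs are paired one at a time, and to read off component sizes as excursions of the walk $S_t$ that tracks the number of active (revealed but unpaired) stubs. When an active stub is matched to one belonging to a previously unseen vertex of degree $d$, the frontier gains $d-1$ new stubs and loses the one just paired, so $S_t-S_{t-1}=d-2$; the distribution of $d$ is essentially the size-biased $q_d=dp_d/\mu_1$ of $\eta$. Components correspond to successive excursions of $S_t$ above its running minimum, their durations encoding component sizes up to $O(1)$ error. Since $\E(\eta-2)=\la-1=-\eps$, the walk has negative drift throughout the subcritical regime.

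The main probabilistic input is a sharp tail estimate for the size $|\cC_v|$ of the component containing a uniformly chosen vertex $v$, of the form
\[
 \Pr\bb{|\cC_v|\ge k} \sim \frac{C(\vd)}{\delta_n\,k^{3/2}}\, e^{-\delta_n k},
\]
uniformly for $k=\Theta(\delta_n^{-1}\log\La)$. Three ingredients drive this. First, because all components of interest have size $o(n)$, only an $o(1)$ fraction of stubs is used before any such excursion closes, so the increments of $S_t$ can be coupled with i.i.d.\ copies of $\eta-2$ to within a negligible error. Second, one performs a Cram\'er tilt by $a_n$: under the tilted law with weights proportional to $e^{a_n(d-2)}$, the step $\eta-2$ is centred (this is the defining equation for $a_n$) and has variance tending to $v_0$, while the partition function contributes exactly $e^{-\delta_n}$ per step. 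Taylor-expanding $\sum q_d e^{a_n(d-2)}$ about $a_n=0$ already yields \eqref{dlsim}. Third, the hitting-time probability is handled by the cycle-lemma identity $\Pr(\text{walk from }1\text{ first hits }0\text{ at time }k)=k^{-1}\Pr(S_k=-1)$ combined with a local CLT for the tilted walk, giving $\Pr(S_k=-1)\sim(2\pi k v_0)^{-1/2}e^{-\delta_n k}$ and hence the displayed tail with an explicit constant $C(\vd)$.

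Converting the per-vertex tail into a count of large components costs an extra factor $k^{-1}$, because conditionally on $|\cC_v|\ge k$ the average size of $\cC_v$ is $k+O(\delta_n^{-1})=k(1+o(1))$; hence the expected number $N_k$ of components of size $\ge k$ satisfies $\E N_k \sim nC(\vd)/(\delta_n k^{5/2})\cdot e^{-\delta_n k}$. Setting $\E N_k=ce^{-x}$, using the identity $\log n=\log\La+3\log\eps^{-1}=\log\La+\tfrac{3}{2}\log\delta_n^{-1}+O(1)$, and iterating once to produce $\log k=\log\log\La+\log\delta_n^{-1}+O(1)$, the logarithms in $\log\delta_n^{-1}$ cancel exactly via $\tfrac{3}{2}+1-\tfrac{5}{2}=0$, and one obtains $k=\delta_n^{-1}(\log\La-\tfrac{5}{2}\log\log\La+x+O(1))$, with the $O(1)$ absorbed into the constant $c(\vd)$ from \eqref{cform}. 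Poisson convergence of $N_k$ is then established by a standard Chen--Stein or factorial-second-moment argument: distinct large excursions are asymptotically independent because, conditional on the first few revealed components, the residual pairing is again a configuration multigraph on a slightly perturbed degree sequence that still satisfies \eqref{a1} and \eqref{a2} with essentially the same $\eps$, $\delta_n$ and $C(\vd)$, so the tail estimate can be reapplied.

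The principal obstacle is executing the local CLT step with an $o(1)$ relative error uniformly for $k=\Theta(\delta_n^{-1}\log\La)$ while the step distribution itself drifts (through $a_n\to 0$) and $k\to\infty$. Closely related is showing that the coupling between the configuration-model exploration and the i.i.d.\ tilted walk accumulates errors small compared with the very small main term $k^{-3/2}e^{-\delta_n k}$ across the whole range; and, for the Poisson step, that the factorial second moment of $N_k$ matches $(\E N_k)^2$ up to $(1+o(1))$. None of these requires new ideas beyond those used for Theorem~\ref{th1}, but the precision demanded by the $-\tfrac{5}{2}\log\log\La$ correction makes the bookkeeping in each step genuinely delicate.
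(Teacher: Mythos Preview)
Your proposal is correct and follows essentially the same route as the paper: stochastic sandwiching of the exploration increments by i.i.d.\ copies of (a small perturbation of) $\eta-2$, Cram\'er tilting to centre the step distribution, Spitzer's lemma to turn the hitting-time probability into a point probability, a local CLT for the tilted walk, and then factorial moments to get Poisson convergence of the count of large components. The one technical wrinkle you have not made explicit is that the local CLT requires a non-lattice condition (in the paper's language, $t\,b_1(Z_n)\to\infty$), which can fail for the raw distribution of $\eta-2$ under assumptions \eqref{a1}--\eqref{a2} alone; the paper handles this by an additional $\gamma$-perturbation of the dominating distributions $Z^\pm$, shifting a tiny amount of mass between consecutive integers so that $b_1\ge\gamma$ while keeping the coupling error negligible.
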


The bound \eqref{12} in Theorem~\ref{th1} is proved by applying
Theorem~\ref{th-1} to what remains of the supercritical graph
after deleting the largest component, so in Theorem~\ref{th1}
we in fact obtain bounds of the type \eqref{L1-+},\eqref{L1-}
but with $c$, $\delta_n$ and $\La$ defined for the `dual' distribution
with $n_dz^d$ vertices of each degree $d$; see~\eqref{Ld}
and the remark at the start of Subsection~\ref{ss_ext}.

Theorem~\ref{th-1} is the equivalent of (the corrected form of, see~\cite{rg_bp}) \L uczak's
extension~\cite{Luczak_near} of Bollob\'as's result~\cite{BB_evol}
for $G(n,p)$ in the subcritical case.

Finally, in the critical case, we obtain an analogue of the results of Aldous~\cite{Aldous}
for $G(n,p)$. The statement requires a few definitions, analogous to those in~\cite{Aldous}.

Let $(W(s))_{0\le s<\infty}$ be a standard Brownian motion.
Given real numbers $\alpha_0$, $\alpha_1$ and $\alpha_2$ with $\alpha_0>0$, let
\begin{equation}\label{Wdef}
 W_\alphas(s) = \alpha_0^{1/2} W(s) + \alpha_1 s -\frac{\alpha_2 s^2}{2}
\end{equation}
be a rescaled Brownian motion with drift $\alpha_1-\alpha_2 s$ at time $s$, and
set
\begin{equation}\label{Bdef}
 B(s) = B_\alphas(s) = W_\alphas(s) - \min_{0\le s'\le s}W_\alphas(s').
\end{equation}
Also, define a process $N(s)$ of `marks' so that, given $B(s)$, $N(s)$
is a Poisson process with intensity $\beta B(s)\dd s$,
where $\beta>0$ is constant. (For the formal details, see~\cite{Aldous}.)
Finally, order the excursions $\gamma_j$ of $B(s)$, i.e., the maximum intervals on which $B(s)$ is strictly
positive, in decreasing order of their lengths $|\gamma_j|$. Writing $N(\gamma_j)$ for the number
of marks in $\gamma_j$, this defines a joint distribution 
\begin{equation}\label{dist}
 \bb{ |\gamma_j|,N(\gamma_j) }_{j\ge 1}
\end{equation}
that depends on the parameters $\alpha_0$, $\alpha_1$, $\alpha_2$ and $\beta$.
\begin{theorem}\label{th0}
Suppose that $\vd=\vd_n$ satisfies assumptions \eqref{a1} and \eqref{a2} above.
Suppose also that $n^{1/3}(\la-1)$ converges to some $\alpha_1\in \R$,
that $\mu_3/\mu_1\to\alpha_0$, that $\mu_3/\mu_1^2\to \alpha_2$, and that $1/\mu_1\to\beta$.
Ordering the components by number of vertices, for any fixed $r$
the sequence $(n^{-2/3}|\cC_j|,n(\cC_j))_{j=1}^r$ converges in distribution
to the first $r$ terms of the sequence $(|\gamma_j|,N(\gamma_j))$ defined above,
where $n(\cC_j)$ is the nullity of $\cC_j$.
\end{theorem}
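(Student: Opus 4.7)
The strategy is to encode the component structure of $G_{\vd}^\mm$ via a stub-by-stub exploration and analyse the resulting random walk, closely following Aldous~\cite{Aldous}. Reveal pairs of $\pP$ sequentially in pair-time $\tau$: maintain the pool of \emph{active} stubs (those at discovered vertices that have not yet been paired); at step $\tau+1$ pick an active stub if one exists, else a uniformly random still-unpaired stub (equivalently, a vertex chosen with probability proportional to its residual degree), and reveal its partner. Let $X_\tau$ be the number of active stubs after the $\tau$-th pair. Its increment equals $d-2$ if the partner lies at a fresh vertex of degree $d$, and $-2$ if it lies at an already-active stub -- a \emph{back-edge}, contributing $1$ to the nullity of the component under exploration. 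Subtracting the degree jumps at each restart gives a walk $S_\tau$ whose increments have the same law at every step; excursions of $X_\tau$ above $\min_{\tau'\le\tau}S_{\tau'}$ are in bijection with components of $G_{\vd}^\mm$, with pair-length $|\cC|-1+n(\cC)$ and back-edge count $n(\cC)$. Since in the critical window $n(\cC_j)=\Op(1)$, $n^{-2/3}|\cC_j|$ and the rescaled excursion length of $S_\tau$ have the same scaling limit.

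The core analytic step is to show that $\tilde S(s):=n^{-1/3}S_{\lfloor sn^{2/3}\rfloor}$ converges in the Skorokhod topology to $W_\alphas(s)$ defined in \eqref{Wdef}. Let $V_\tau(d)$ be the number of undiscovered degree-$d$ vertices, so $\sum_d dV_\tau(d)+X_\tau=\mu_1 n-2\tau$. A Freedman-type martingale argument based on the bounded increments from~\eqref{a1} yields the fluid-limit estimate
\[
V_\tau(d)=n p_d\bigl(1-\tfrac{2\tau}{\mu_1 n}\bigr)^{d/2}+\Op(n^{1/3})
\]
uniformly for $\tau\le Cn^{2/3}$. Substituting into the exact conditional drift
\[
\E(\Delta S_\tau\mid\cF_{\tau-1})=\frac{\sum_d d(d-2)V_\tau(d)-2(X_\tau-1)}{\mu_1 n-2\tau-1}
\]
and expanding to first order in $\tau/n$, with the identity $\sum_d d(d-2)^2 p_d=\mu_3-\mu_2+\mu_1=\mu_3+O(\eps)$ entering the linear coefficient, gives
\[
\E(\Delta S_\tau\mid\cF_{\tau-1})=\eps-\frac{\mu_3}{\mu_1^2}\cdot\frac{\tau}{n}+\op(n^{-1/3}).
\]
At $\tau=sn^{2/3}$ this reads $n^{-1/3}(\alpha_1-\alpha_2 s)+\op(n^{-1/3})$, so the accumulated drift of $\tilde S(s)$ matches that of $W_\alphas(s)$; the conditional variance per step converges to $\mu_3/\mu_1=\alpha_0$. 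A martingale functional CLT now delivers $\tilde S\dto W_\alphas$, and the almost-sure continuity of the excursion-decomposition map at such paths (Aldous~\cite{Aldous}) combined with the continuous mapping theorem yields $n^{-2/3}|\cC_j|\dto|\gamma_j|$ jointly for $j\le r$.

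For the joint convergence with nullities, conditional on $\cF_{\tau-1}$ a back-edge appears at step $\tau+1$ with probability $(X_\tau-1)/(\mu_1 n-2\tau-1)=\mu_1^{-1}n^{-2/3}B_\alphas(s)(1+\op(1))$ when $\tau=sn^{2/3}$, since $X_\tau-\min_{\tau'\le\tau}S_{\tau'}=n^{1/3}(B_\alphas(s)+\op(1))$. Accumulating these Bernoulli trials over $n^{2/3}\,\dd s$ pair-steps per unit of $s$, the back-edge point process jointly with $\tilde S$ converges in distribution to a Cox process of intensity $\beta B_\alphas(s)\,\dd s$ with $\beta=1/\mu_1$. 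The joint continuity of excursion-length and mark-count functionals at the Brownian limit then delivers $(n^{-2/3}|\cC_j|,n(\cC_j))\dto(|\gamma_j|,N(\gamma_j))$ for $j\le r$.

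The main technical obstacle is the uniform fluid-limit bound for $V_\tau(d)$: the error must be controlled tightly enough ($\Op(n^{1/3})$ suffices) throughout $\tau\le Cn^{2/3}$ so that, after summation over $\Theta(n^{2/3})$ steps and rescaling by $n^{1/3}$, the deterministic second-order drift produces the $-\alpha_2 s^2/2$ parabolic term while the fluctuation error vanishes. This is a delicate but standard depletion-of-points martingale argument, essentially of the type carried out in~\cite{BR_walk}. The remaining routine points -- that the total restart correction in $S_\tau$ is $\op(n^{1/3})$ in pair-time $O(n^{2/3})$, and that the excursions-to-components bijection is stable under the scaling limit -- are handled by direct estimates.
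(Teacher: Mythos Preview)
Your outline follows essentially the same route as the paper: encode the exploration as a walk, show via a martingale functional CLT that the rescaled walk converges to $W_\alphas$, read off component sizes as excursion lengths of the reflected process, and obtain nullities from the back-edge intensity $\beta B_\alphas(s)$. The paper works in vertex-time with the bookkeeping $X_t=A_t-2C_t$ of \eqref{Xdef}, which has the advantage that the increment law \eqref{Xdiff} is the \emph{same} at restart and non-restart steps without any correction; the drift is then read off from the pre-computed trajectory $x(\tau)$ of Lemma~\ref{traj}, with the concentration of the $U_{d,t}$ supplied by Theorem~\ref{left} rather than a direct Freedman argument. Your pair-time variant with direct drift expansion is a minor reparametrisation of this.

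There is, however, one genuine gap. Functional convergence of $\tilde S$ on $[0,C]$ for each fixed $C$, combined with continuity of the excursion map, controls the largest components \emph{explored in the first $Cn^{2/3}$ steps}; it does not by itself show that these are the $r$ largest components of the whole graph. One must rule out components of order $\ge\delta n^{2/3}$ appearing later in the exploration, and the Aldous machinery does not provide this for free in this model. The paper addresses this explicitly: after time $\omega n^{2/3}$ with $\omega\to\infty$, the unexplored graph is a subcritical configuration model, and Theorem~\ref{th-1} is invoked to bound its largest component by $o(n^{2/3})$. You need to insert an equivalent step. A lesser point: the number of components started by time $Cn^{2/3}$ is $\Theta_p(n^{1/3})$, not $o_p(n^{1/3})$, so your claim that the total restart correction is $\op(n^{1/3})$ is false as stated; in the paper's accounting the component count enters the error only through $tC_t/n=\Op(1)$ in \eqref{Ei1a}, which is what actually makes the discrepancy negligible after rescaling.
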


Note that in the light of \eqref{vo1}, assuming $\mu_3/\mu_1\to\alpha_0$
is equivalent to assuming $v_0\to\alpha_0$.

We have written Theorem~\ref{th0} with the scaling that arises most naturally
in the proof. From the scaling properties of Brownian motion, one can check
that $B_\alphas(s)$ is equal in distribution (as a process) to
$\alpha' B_{1,\alpha'_1,1}(\alpha'_0 s)$ where
$\alpha'=\alpha_0^{2/3}\alpha_2^{-1/3}$, $\alpha'_0=\alpha_2^{2/3}\alpha_0^{-1/3}$
and $\alpha'_1=\alpha_1\alpha_0^{-1/3}\alpha_2^{-1/3}$.
Hence, if we consider only the component sizes, there is a single-parameter
family of limiting processes, characterized
by $\alpha'_1$, the limiting value of $n^{1/3}(\la-1)\mu_1\mu_3^{-2/3}$.
Noting that the excursion lengths are scaled by $\alpha'_0$, Theorem~\ref{th0}
shows that if $n^{1/3}(\la-1)\mu_1\mu_3^{-2/3}\to\alpha$, then
$(n^{-2/3}\mu_3^{1/3}\mu_1^{-1} L_i)_{i=1}^r$ converges
to the first $r$ sorted excursion lengths of $B_{1,\alpha,1}$. These
excursion lengths are exactly the rescaled component sizes appearing
in Aldous's result for $G(n,1/n+\alpha n^{-4/3})$.

If we also consider the nullities, or mark counts, then there is a two-parameter
family of possible limits.

\subsection{Applications and Extensions}\label{ss_ext}

Let $\vd$ satisfy the assumptions above (in particular,
\eqref{a1}, \eqref{a2}, \eqref{a3} and $\La=\eps^3 n\to\infty$).
Observing at all times the restriction that all degrees are at most
$\dmax$, changing $m$ entries of $\vd$ changes the proportion $p_d$
of degree-$d$ vertices by $O(m/n)$,
and hence changes the quantities $\mu_1$, $\mu_2$ and $\mu_3$,
which are all of order $1$, by $O(m/n)$.
It follows that $\eps=\la-1=\mu_2/\mu_1-1$ changes by an absolute
amount that is $O(m/n)$, so if $m=o(\eps n)$ the relative change in $\eps$
is $O(m/(\eps n))$. It is easy to see, and will follow from the results
later in the paper, that the relative change in $\rho$, $\rho^*$,
$\La=\eps^3n$ or $\delta_n$ is of this same order $O(m/(\eps n))$,
as one would expect from the formulae~\eqref{rhosim} and \eqref{dlsim}.

When $m=o(\sqrt{n/\eps})$, the changes in $\rho n$ and $\rho^*n$ are small compared
to the relevant standard deviations, and
it follows that Theorem~\ref{th1} applies just as well to $G_{\vd'}^\mm$
for any $\vd_n'$ obtained from $\vd_n$ by changing 
$o(\sqrt{n/\eps})$ entries, even when all quantities in the conclusion
of the theorem are calculated for $\vd$ rather than for $\vd'$.
Similarly, if $m=O(\sqrt{n/\eps})$ then the relative
change in $\delta_n$ is $O(1/\sqrt{\La})=o(1/\log\La)$,
and it is not hard to check that Theorem~\ref{th-1} applies
to $G_{\vd'}^\mm$ in this case; for Theorem~\ref{th0},
the corresponding condition is simply $m=o(n^{2/3})$,
so that the limit of $n^{1/3}(\la-1)$ is unchanged.
Note that $m=O(\sqrt{n})$ satisfies the bound on $m$ in all
three cases.

One application of these
observations concerns random subgraphs of random $r$-regular graphs,
or indeed random subgraphs of configuration (multi-)graphs.

Let $\vd$ be the (random) degree sequence of the graph $G_r^\mm[p]$ obtained by starting
with an $r$-regular configuration multigraph, selecting
edges independently with probability $p$, and retaining
the selected edges. Conditional on $\vd$, the distribution
of $G_r^\mm[p]$ is simply that of $G_{\vd}$. Hence,
as noted by Fountoulakis~\cite{F_percd}, one can study
$G_r^\mm[p]$ by studying $G_{\vd}^\mm$.

It is very easy to check that for $0\le d\le r$,
the proportion $p_d$ of degree-$d$ vertices in $\vd$
satisfies 
\[ 
 p_d =p_d^0 +\Op(n^{-1/2}) \text{\ \ \ where\ } p_d^0 = \binom{r}{d} p^d(1-p)^{r-d}.
\]
It follows that the quantities $\mu_i$ defined above
satisfy
\[
 \mu_i = \mu_i^0 + \Op(n^{-1/2}) \text{\ \ \ where\ }\mu_i^0 =p^i(r)_i=p^ir(r-1)\cdots(r-i+1).
\]
Note that $\la^0=\mu_2^0/\mu_1^0=p(r-1)$, and when $\la^0\to 1$, i.e., $p\sim 1/(r-1)$,
then $\mu_1^0\sim r/(r-1)$ and $\mu_3^0\sim r(r-2)/(r-1)^2$.
From the remarks above, even though the actual number of vertices of each degree
is random, $G_{\vd}^\mm$ will satisfy the conclusions of Theorem~\ref{th1}--\ref{th0}
for the idealized sequence with $p_d$ replaced by $p_d^0$.
Hence, defining $\rho^0$ by \eqref{prerho} and \eqref{rhodef} with $p_d$ replaced by $p_d^0$,
Theorems~\ref{th1}--\ref{th0} have the following consequence.
(We omit the nullity result and the analogue of \eqref{L1-+} for simplicity; these also carry over.)

\begin{corollary}\label{crreg}
Let $r\ge 3$ be fixed and let $p=(1+\eps)/(r-1)$ where $\eps=\eps(n)\to 0$.
Let $G$ be the random subgraph of the random $r$-regular configuration multigraph
on $n$ vertices obtained by selecting
edges independently with probability $p$.

If $\eps>0$ and $\eps^3n\to\infty$ then
\[
 \frac{L_1(G) - \rho^0 n}{\sigma\sqrt{n}} \dto N(0,1),
\]
where $\rho^0$ (defined above) satisfies $\rho^0\sim 2\eps r/(r-2)$, and $\sigma^2= 2\eps^{-1}r/(r-1)$.

If $\eps<0$ and $|\eps|^3n\to\infty$ then
\[
 L_2(G) = \delta_n^{-1}(\log\La -\frac{5}{2}\log\log\La +\Op(1)),
\]
where $\La=|\eps|^3 n$ and $\delta_n$, defined by \eqref{dldef} with $p_d^0$ in place
of $p_d$, satisfies
$\delta_n\sim \frac{r-1}{2(r-2)}\eps^2$.

Finally, if $n^{1/3}\eps\to\alpha_1\in\R$ then the sizes and nullities
of the components sorted in decreasing order of size
converge in distribution to the distribution described in \eqref{dist} with $\alpha_0=(r-2)/(r-1)$,
$\alpha_2=(r-2)/r$ and $\beta=(r-1)/r$.\noproof
\end{corollary}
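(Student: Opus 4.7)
The plan is to apply Theorems~\ref{th1}, \ref{th-1} and \ref{th0} to $G_{\vd}^\mm$, where $\vd$ is the (random) degree sequence of $G$, and then average over $\vd$. This reduction is valid because, conditional on $\vd$, the law of $G=G_r^\mm[p]$ coincides with that of the configuration multigraph $G_{\vd}^\mm$, as noted in the discussion preceding the corollary.

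Using the easy concentration estimate $p_d=p_d^0+\Op(n^{-1/2})$ quoted just before the corollary, the random sequence $\vd$ differs from the idealized sequence $\vd^0$ (having $\approx np_d^0$ vertices of each degree $d$) in only $m=\Op(\sqrt{n})$ coordinates. The robustness observations at the start of Subsection~\ref{ss_ext} therefore apply: since $m=O(\sqrt{n})$ lies within the tolerance of each of the three main theorems in their respective regimes, we may apply those theorems to $G_{\vd}^\mm$ with all derived quantities computed for $\vd^0$ rather than for $\vd$. Hypotheses \eqref{a1} (with $\dmax=r$) and \eqref{a2} (with any $\dtwo<\min\{p_d^0:d\notin\{0,2\}\}$, which is positive since $r\ge 3$ and $p$ is bounded away from $0$ and $1$) hold automatically for $\vd^0$, hence whp for $\vd$.

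It then remains to evaluate the conclusions for $\vd^0$. Using $\mu_i^0=p^i(r)_i$ and the limit $p\to 1/(r-1)$, one has $\mu_1^0\sim r/(r-1)$ and $\mu_3^0\sim r(r-2)/(r-1)^2$. The supercritical asymptotic \eqref{rhosim} gives $\rho^0\sim 2(\mu_1^0)^2\eps/\mu_3^0=2r\eps/(r-2)$, while Theorem~\ref{th1} gives $\sigma^2=2\mu_1^0/\eps\sim 2\eps^{-1}r/(r-1)$. The subcritical asymptotic \eqref{dlsim} yields $\delta_n\sim\eps^2\mu_1^0/(2\mu_3^0)=(r-1)\eps^2/(2(r-2))$, and the critical limits $\alpha_0=(r-2)/(r-1)$, $\alpha_2=(r-2)/r$, $\beta=(r-1)/r$ follow directly from $\mu_3^0/\mu_1^0$, $\mu_3^0/(\mu_1^0)^2$ and $1/\mu_1^0$.

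No step here is a genuine obstacle: the concentration of $\vd$ around $\vd^0$ is a one-line Azuma estimate on the $rn/2$ independent edge-retention indicators (each of which alters any degree count by $O(1)$), the robustness statement is already spelled out in Subsection~\ref{ss_ext}, and the final computation is routine substitution. The content of the corollary is really in having the three main theorems available in the configuration-model generality.
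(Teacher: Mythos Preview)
Your proposal is correct and follows exactly the approach the paper itself uses: the corollary is stated with no separate proof because the preceding paragraphs already spell out the reduction (condition on the random degree sequence $\vd$, invoke the robustness remarks with $m=\Op(\sqrt{n})$, and substitute $\mu_i^0=p^i(r)_i$). Your write-up simply makes this explicit, and the numerical evaluations of $\rho^0$, $\sigma^2$, $\delta_n$, $\alpha_0$, $\alpha_2$, $\beta$ are all correct.
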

Note that this result is consistent with, but much sharper than, the results
of Nachmias and Peres~\cite{NP_regular}.   Of course, one can formulate
a similar corollary concerning the random subgraph $G_{\vd}^\mm[p]$
of $G_{\vd}^\mm$, for any degree sequence $\vd$ satisfying \eqref{a1} and \eqref{a2};
passing to the subgraph multiplies $\mu_i$ by $p^i$, just
as in the $r$-regular case, and $p$ must be chosen so that the value
of the `branching factor' $\la$ in the subgraph ($p$ times
that in the original) is of the form $1+\eps$ with $\eps\to 0$.

As shown by Bollob\'as~\cite{BB_config} (see also~\cite{BC}), when the maximum degree
is bounded, the probability that the configuration multigraph $G_{\vd}^\mm$ is 
simple is bounded away from 0, and conditional on this event,
$G_{\vd}^\mm$
has the distribution of $G_{\vd}$, a uniformly random simple graph with degree sequence $\vd$.
It follows that any `whp' results for $G_{\vd}^\mm$ transfer to $G_\vd$. This applies
to Theorem~\ref{th-1}, in the weaker form \eqref{L1-},
but not to the other results above. More precisely,
as shown in~\cite{BB_config} (or, after translating from the enumerative
to probabilistic viewpoint, \cite{BC}),
when the maximum degree is bounded,
the probability $p$ that $G_{\vd}^\mm$ is simple satisfies
$p\sim\exp(-\theta-\theta^2)$, where $\theta=\sum_i \binom{d_i}{2}/\sum_i d_i$,
which in our notation is $\mu_2/(2\mu_1)$.

The proofs of Theorems~\ref{th1}--\ref{th0} involve `exploring' part of the graph.
We can end these explorations when certain entire components have been revealed,
comprising in total $o(n)$ vertices. It is easy to check that the probability
of encountering a loop or multiple edge in the exploration is $o(1)$.
Moreover, the unexplored part of the graph may be seen as a configuration
multigraph $G'$. Since only $o(n)$ vertices have been explored, the (conditional)
probability that $G'$ is simple is $p+o(1)$: the corresponding $\theta$
is within $o(1)$ of the original $\theta$. It follows that if $\mathcal{E}$ is some
not too unlikely event
defined in terms of our exploration, then the probability that $\mathcal{E}$ holds
and $G_{\vd}^\mm$ is simple is $(\Pr(\mathcal{E})+o(1))(p+o(1)) \sim p \Pr(\mathcal{E})$.
Thus conditioning on $G_{\vd}^\mm$ being simple hardly changes the probability
of $\mathcal{E}$. Using this observation it is easy to transfer the results
above to random simple graphs; we omit the details.
\begin{theorem}
All the results above apply unchanged if $G_\vd^\mm$ is replaced by
the random simple graph $G_\vd$.\noproof
\end{theorem}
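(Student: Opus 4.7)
The plan is to execute the sketch given in the paragraph preceding the statement, which rests on Bollob\'as's formula for simplicity. Under \eqref{a1}--\eqref{a3} the parameter $\theta=\mu_2/(2\mu_1)$ is $\Theta(1)$, so \cite{BB_config,BC} gives $p_n:=\Pr(G_\vd^\mm\text{ simple})\to\exp(-\theta-\theta^2)$, which is bounded away from $0$. Since conditional on simplicity $G_\vd^\mm$ has the law of $G_\vd$, any whp statement on $G_\vd^\mm$ transfers without effort: $\Pr(A^c)=o(1)$ implies $\Pr(A^c\mid\text{simple})\le \Pr(A^c)/p_n=o(1)$. In particular this already yields \eqref{12} and \eqref{L1-} for $G_\vd$.

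For the distributional statements (the joint CLT in Theorem~\ref{th1}, the law~\eqref{L1-+}, and the joint convergence in Theorem~\ref{th0}) I would show instead that every defining event $\cE=\{X_n\in A\}$ satisfies $\Pr(\cE\mid\text{simple})=\Pr(\cE)+o(1)$, which transfers each limit verbatim. The key observation is that each such $\cE$ is measurable with respect to a partial pairing $\pP_0$ exposed by an exploration of $G_\vd^\mm$ that reveals only $N_0=o(n)$ vertices. This is automatic at criticality (the first $r$ largest components have total size $\Op(n^{2/3})$) and in the weakly supercritical regime (the giant itself has $\rho n=\Theta(\eps n)=o(n)$ vertices since $\eps\to 0$, and the proof of Theorem~\ref{th1} reveals it together with boundedly many small companions); in the weakly subcritical regime one extracts the Poisson process of large-component sizes from a suitably chosen initial segment of the exploration, again of sublinear length.

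Let $E_{\text{good}}$ be the event that $\pP_0$ contains no loop and no multiple edge. Since $\dmax=O(1)$, the expected number of such incidents among the first $O(N_0)$ paired stubs is $O(N_0^2/n)=o(1)$, so $\Pr(E_{\text{good}})\to 1$. Conditional on $\pP_0$ and $E_{\text{good}}$, the unrevealed stubs are matched by a uniform random pairing, producing a configuration multigraph on a residual degree sequence $\vd'$ that differs from $\vd$ in at most $N_0=o(n)$ entries; in particular $\theta(\vd')=\theta(\vd)+o(1)$, and Bollob\'as's formula applied to $\vd'$ gives
\[
 \Pr\bb{G_\vd^\mm\text{ simple}\mid\pP_0,E_{\text{good}}}=p_n+o(1)
\]
uniformly in $\pP_0$. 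Integrating this identity against $\mathbf{1}_{\cE\cap E_{\text{good}}}$ and dividing by $\Pr(\text{simple})=p_n+o(1)$ gives $\Pr(\cE\mid\text{simple})=\Pr(\cE)+o(1)$, as required.

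The main subtlety, and the only point where real care is needed, is the measurability claim in the second paragraph in the weakly subcritical regime, where the components are individually tiny but collectively exhaust the vertex set: one has to verify that the Poisson-process representation driving the Gumbel limit~\eqref{L1-+} can be read off from an exploration of sublinear total length. Once this is granted, the conditioning computation above is completely mechanical and the theorem follows.
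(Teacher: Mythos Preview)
Your approach is exactly the paper's: reduce to showing that the exploration events have conditional probability (given simplicity) within $o(1)$ of their unconditional probability, using the asymptotic independence of simplicity of the unexplored part. One quantitative step is wrong, however, and as written the argument breaks in the supercritical regime.

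You claim that the expected number of loops and multiple edges among the first $O(N_0)$ paired stubs is $O(N_0^2/n)=o(1)$. In the supercritical case $N_0=\Theta(\eps n)$, so $N_0^2/n=\Theta(\eps^2 n)$, and since $\eps^3 n\to\infty$ and $\eps\to 0$ we have $\eps^2 n=(\eps^3 n)/\eps\to\infty$; your bound does \emph{not} give $o(1)$. The correct estimate uses the bounded-degree hypothesis more carefully: at each exploration step the new edges all involve the single new vertex $v_{t+1}$, and with $d_{v_{t+1}}\le\dmax$ the conditional probability of producing a loop at $v_{t+1}$ or a repeat of the just-created forward edge is $O(\dmax^2/U_t)=O(1/n)$. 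Summing over $N_0$ steps gives $O(N_0/n)=O(\eps)=o(1)$, which is what the paper asserts. With this fix your $E_{\text{good}}$ step goes through.

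On the subcritical ``subtlety'' you flag: no additional idea is needed. The proof of Theorem~\ref{th-1} in Section~\ref{sec_below} already establishes the Poisson limit for $N$ (the count of components with size in $[t^-,t^+]$) via its factorial moments, and the $r$th factorial moment is computed from $\Pr(|\cC_1|,\ldots,|\cC_r|\in I)$, an event measurable with respect to the exploration of at most $r t^+=O(\eps^{-2}\log\La)=o(n)$ vertices. Your conditioning argument therefore transfers each factorial moment verbatim to $G_\vd$, and the Gumbel limit \eqref{L1-+} follows for $G_\vd$ by the same method-of-moments step. So the only repair needed is the $O(N_0/n)$ bound above.
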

Note that in the analogue of Corollary~\ref{crreg} this means that
we consider a random subgraph of a random $r$-regular simple graph.
Here one must be slightly careful with the argument: we need to explore
the subgraph, but then check whether the \emph{original} graph is simple.

The rest of the paper is organized as follows.
In the next section we define the exploration process that we
study, and two corresponding random walks $(X_t)$ and $(Y_t)$.
In Section~\ref{sec_traj} we establish some key properties of $(X_t)$,
including the `idealized trajectory' that we expect it to remain close to.
Using these properties, and assuming Theorem~\ref{th-1}
for the moment, we prove Theorem~\ref{th0} in Section~\ref{sec_crit}
and Theorem~\ref{th1} in Section~\ref{sec_above}.
In Section~\ref{sec_local} we prove a local limit theorem (Lemma~\ref{llt}) for
certain sums of independent random variables.
Finally, in Section~\ref{sec_below} we prove Theorem~\ref{th-1}.
The proof (which uses the local limit theorem)
is rather different from that of the other main
results: we use domination arguments to study the initial behaviour of
$(X_t)$, rather than following its evolution as in the main part of the
paper. This is the reason for postponing the proof, even though (a weak form of)
the result is needed in the proofs of Theorems~\ref{th1} and~\ref{th0},
to rule out `other' large components.

\section{The exploration process}\label{sec_explore}

Consider the following exploration process for uncovering the
components of $G=G_{\vd}^\mm$. This is a slight variant of the
standard process, in that we check for `back-edges' forming cycles as
we go. A form of this variant was used by Ding, Kim, Lubetzky and
Peres~\cite{DKLP_diam} in their study of the diameter of $G(n,p)$;
we could in fact use a more standard exploration here, but the
variant results in slightly cleaner calculations.

We shall define an exploration so that after $t$ steps of the process,
$t$ vertices have been `reached'; the other $n-t$ are `unreached'.
Furthermore, a certain random number of stubs will have been paired with each
other, and each unpaired stub will be either `active' or `unreached';
the active stubs are attached to reached vertices, the unreached ones
to unreached vertices.
We write $A_t$ for the number of active stubs, and $U_t$ for the number
of unreached stubs. The process we define will be such that
in the complete pairing $\pP$, the active stubs are paired to a subset of the unreached stubs,
and the remaining unreached stubs are paired with each other. Moreover, the conditional distribution
of the pairing $\pP$ given the first $t$ steps of the process is such that
all pairings of the active and unreached stubs satisfying this condition are equally likely.

At step $t+1$ of the process, if $A_t>0$ then we pick an active stub $a_{t+1}$, for example the first
in some order fixed in advance. Then we reveal its partner $u_{t+1}$ in the random pairing $\pP$,
which is necessarily unreached. Let $v_{t+1}$ be the corresponding unreached vertex.
The stubs $a_{t+1}$ and $u_{t+1}$ are now paired (so in particular $a_{t+1}$ is no longer active),
and
the remaining stubs $u_{t+1,1},\ldots,u_{t+1,r_{t+1}}$ attached to $v_{t+1}$ are provisionally declared active.
But now 

(i) we check whether any other (previously) active stubs are paired to any of the $u_{t+1,i}$,
and then 

(ii) we check whether any of the remaining $u_{t+1,i}$ are paired to each other.

\noindent
We declare any pairs found in (i) or (ii) `paired', and continue. These pairs
correspond to `back-edges'. 

If $A_t=0$ then we simply pick $v_{t+1}$ to be a random unreached vertex, chosen
with probability proportional to degree, provisionally declare
all its stubs active, and then perform the second check (ii) above.
In this case we say that we `start a new component' at step $t+1$.

For (partial) compatibility with the notation in \cite{BR_walk}, let $\eta_{t+1}$ denote
the degree of $v_{t+1}$. We write $\newbe_{t+1}$ for the number of back-edges
found during step $t+1$, and $Y_t=\sum_{i\le t}\newbe_i$ for the total number
of back-edges found during the first $t$ steps.

Let $C_t$ be the number of components that we have started exploring within the first
$t$ steps, and set
\begin{equation}\label{Xdef}
 X_t = A_t-2C_t.
\end{equation}
Considering separately the cases $A_t>0$ and $A_t=0$, and noting that finding
a back-edge pairs off two stubs, we see that
\begin{equation}\label{Xdiff}
 X_{t+1} - X_t = \eta_{t+1}-2 -2\newbe_{t+1},
\end{equation}
while by the definition of $Y_t$,
\begin{equation}\label{Bdiff}
 Y_{t+1} - Y_t = \newbe_{t+1}.
\end{equation}

Let $\cF_t$ denote the (finite, of course) sigma-field generated by the information
revealed by step $t$.
Let $U_{d,t}$ denote the number of unreached \emph{vertices} of degree $d$ after $t$ steps,
so, recalling that $U_t$ denotes the (total) number of unreached \emph{stubs}, we have
\begin{equation}\label{Udef}
 U_t = \sum_d dU_{d,t}.
\end{equation}
In both cases above, the vertex $v_{t+1}$ is chosen from the unreached
vertices and, given $\cF_t$, the probability that any given vertex is chosen is proportional
to its degree.
Hence
\begin{equation}\label{etaprob}
 \Pr(\eta_{t+1}=d \mid \cF_t) = dU_{d,t} /U_t.
\end{equation}
In particular,
\[
 \E(\eta_{t+1}-1 \mid \cF_t) = U_t^{-1} \sum_d d(d-1)U_{d,t}.
\]

In the analysis that follows, we shall impose the assumption
\begin{equation}\label{assump}
 t\le c_0 n/2,
\end{equation}
where $c_0$ is the constant in \eqref{a2}.
Since at least $c_0 n$ vertices have degree at least~$1$,
\eqref{assump} implies that
\begin{equation}\label{ubelow}
 U_t\ge c_0 n/2,
\end{equation}
and in particular that $U_t=\Theta(n)$. This will simplify some formulae
in the calculations.

Suppose that $A_t>0$. Then, given $\cF_t$ and $v_{t+1}$, the expected
number of pairs discovered during check (i) above is exactly
\begin{equation*}
 (A_t-1)\frac{\eta_{t+1}-1}{U_t-1} = (\eta_{t+1}-1)A_t/U_t +O(1/n),
\end{equation*}
using $U_t=\Theta(n)$ for the approximation. Indeed, each of the $A_t-1$ other
stubs that were active before this step is equally likely to be paired
to any of the $U_t-1$ remaining unreached stubs.
We could write an exact formula for the expected number of pairs
found during check (ii), but there is no need: instead we simply note that
the expectation is $O(1/n)$.
It follows that
\begin{equation}\label{pform}
 \E(\newbe_{t+1}\mid \cF_t,\eta_{t+1}) =  (\eta_{t+1}-1)A_t/U_t +O(1/n),
\end{equation}
and hence that
\begin{equation}\label{Ebeta}
 \E(\newbe_{t+1}\mid \cF_t) = \E(\eta_{t+1}-1\mid \cF_t)A_t/U_t +O(1/n).
\end{equation}
The last two formulae are also valid when $A_t=0$: this time there is no check (i), and
the expected number of back-edges found during check (ii) is $O(1/n)$.
From \eqref{Xdiff} and \eqref{Xdef} it follows that
\begin{equation}\label{Xdelta}
 \E( X_{t+1} - X_t \mid \cF_t ) = -1 + \E(\eta_{t+1}-1\mid \cF_t)(1-2X_t/U_t) + O(C_t/n).
\end{equation}
We use $C_t\ge 1$ for $t\ge 1$ to avoid writing a separate $O(1/n)$ error term, not that
it matters.

Simply knowing the expected changes at each step is good enough to allow us to deduce
fairly tight bounds on the size of the giant component. But for asymptotic
normality we need a bound on the variance. 
We could give a formula that is useful when $A_t$ is comparable with $U_t$,
but we shall not need this. Instead, we simply note that the number $\newbe_{t+1}$
of pairs found during our checks (i) and (ii) is bounded, and for $t=o(n)$,
which implies $A_t=o(U_t)$, we have $\newbe_{t+1}=0$ whp.
Since $\eta_{t+1}$ is bounded, it follows easily that for $t=o(n)$ we have
\[
 \Var(X_{t+1}-X_t\mid \cF_t) = \Var(\eta_{t+1}\mid \cF_t) +o(1) = v_0+o(1),
\]
where $v_0$ is defined in \eqref{v0def}, and the second equality follows
from the fact that only $o(n)$ vertices have been `used up' by time $t$.

Since $A_t\ge 0$, with equality only when we have just finished
exploring a component, the times $t_1,\ldots,t_{c(G)}$ at which we finish exploring
components
are given by
\begin{equation}\label{ti}
 t_i = \min\{ t: X_t=-2i\}.
\end{equation}
Since exactly one vertex is revealed at each stage, if $\cC_i$
denotes the $i$th component explored, then
\[
 |\cC_i| = t_i-t_{i-1},
\]
where we set $t_0=0$.

Recall that $Y_t$ denotes the number of back-edges found within the first $t$ steps.
Then $Y_{t_i}-Y_{t_{i-1}}$ is simply the nullity of $\cC_i$:
\begin{equation}\label{nullity}
 n(\cC_i) = Y_{t_i}-Y_{t_{i-1}}.
\end{equation}

In the rest of the paper we shall study the behaviour of the random
walks $(X_t)$ and $(Y_t)$, and use this to prove our main results.

\section{Trajectory and deviations}\label{sec_traj}

As in~\cite{BR_walk}, we shall write the difference $X_{t+1}-X_t$ as $D_t+\Delta_t$,
where $D_t=\E(X_{t+1}-X_t\mid\cF_t)$, so the $\Delta_t$ may
be regarded as a sequence of martingale differences.
It is more or less automatic that $\sum_{i\le t}\Delta_i$
is asymptotically normally distributed, so we need to understand
the sum of the $D_t$. Each $D_t$ depends on $X_t$,
but it turns out that the dependence is not very strong.
So if we can find an `ideal' trajectory (corresponding to
all $\Delta_t$ being equal to zero), then 
bounding the deviations of $(X_t)$ from this trajectory will not be too difficult.

The first problem is that the term $U_t$ appearing in \eqref{etaprob} is not so simple.
We start with some lemmas.
The first is a standard result about order statistics.

\begin{lemma}\label{O1}
Let $Z_1,\ldots,Z_n$ be i.i.d.\ samples from a distribution $Z$ on $[0,1]$
with distribution function $G(x)=\Pr(Z\le x)$, and let $N_n(x)=|\{i:Z_i\le x\}|$.
Then for any (deterministic) function $y=y(n)$ we have
\begin{equation}\label{even}
 \sup_{0\le x\le y} |N_n(x)-nG(x)| = \Op(\sqrt{nG(y)}).
\end{equation}
\end{lemma}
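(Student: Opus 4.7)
The plan is to reduce to uniform samples on $[0,1]$ and then exploit conditioning on the count at the right endpoint $y$. Set $p=G(y)$. By the standard quantile/pseudo-inverse coupling, we may assume there exist i.i.d.\ $U_i$ uniform on $[0,1]$ with $\{Z_i\le x\}=\{U_i\le G(x)\}$ for every $x$; writing $F_n(u)=|\{i:U_i\le u\}|$ we then have $N_n(x)=F_n(G(x))$ and $nG(x)\in[0,np]$ for $x\in[0,y]$. It therefore suffices to prove
\[
 \sup_{0\le u\le p}|F_n(u)-nu|=\Op(\sqrt{np}).
\]

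The key step is to condition on $k:=F_n(p)$, which is $\Bern$-sum distributed, i.e., $\mathrm{Bin}(n,p)$. Chebyshev's inequality gives $|k-np|=\Op(\sqrt{np(1-p)})=\Op(\sqrt{np})$. Conditional on $k$ and on the set of indices with $U_i\le p$, those $k$ variables are i.i.d.\ uniform on $[0,p]$; rescaling by $1/p$ turns them into $k$ uniform samples on $[0,1]$, so by the classical Kolmogorov--Smirnov bound (or directly from the DKW inequality applied conditionally),
\[
 \sup_{0\le u\le p}\bigabs{F_n(u)-(k/p)u}=\Op(\sqrt k)=\Op(\sqrt{np}),
\]
since $k=\Op(np)$ whenever $np\to\infty$ (and if $np=O(1)$ the lemma is trivial because then $F_n(p)=\Op(1)$ and $nu\le np=O(1)$). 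The triangle inequality $|F_n(u)-nu|\le |F_n(u)-(k/p)u|+(u/p)|k-np|\le |F_n(u)-(k/p)u|+|k-np|$, valid for $u\le p$, now yields the desired supremum bound.

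The one external ingredient is the uniform empirical process bound, which is classical; if one prefers a self-contained argument, it can be replaced by a dyadic chaining. Cover $[0,p]$ by the intervals $I_j=[p2^{-j},p2^{-j+1}]$, $j\ge 1$. Since both $F_n$ and $nu$ are non-decreasing, for $u\in I_j$ one has
\[
 |F_n(u)-nu|\le \bigabs{F_n(p2^{-j})-np2^{-j}}+\bigabs{F_n(p2^{-j+1})-np2^{-j+1}}+np2^{-j},
\]
and $\Var(F_n(u)-nu)=nu(1-u)\le nu$. A Chebyshev/union bound with weights $2^{-j/4}$ keeps the total failure probability summable, and the resulting envelopes add geometrically to $\Op(\sqrt{np})$; the deterministic error $np2^{-j}$ only dominates when $2^j\lesssim\sqrt{np}$, over which range it is itself $\Op(\sqrt{np})$. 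The main (minor) subtlety is keeping the constants in the union bound independent of $n$ and of how small $p$ may be; no individual step is deep.
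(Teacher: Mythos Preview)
Your main argument is correct and takes a genuinely different route from the paper. The paper Poissonizes: replacing $n$ by a Poisson($n$) number of samples makes $N'(x)-nG(x)$ a continuous-time martingale with independent increments, so Doob's maximal inequality gives $\sup_{x\le y}|N'(x)-nG(x)|=\Op(\sqrt{nG(y)})$ directly; it then de-Poissonizes by noting that adding or deleting the $|A-n|=\Op(\sqrt n)$ surplus points changes the count in $[0,y]$ by $\Op(\sqrt n\,G(y))\le\Op(\sqrt{nG(y)})$. You instead condition on the binomial count $k=F_n(p)$, apply DKW to the rescaled conditional uniforms, and combine with $|k-np|=\Op(\sqrt{np})$ via the triangle inequality. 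Both are short; the paper's version is self-contained (only Doob), while yours imports DKW as a black box.

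Your proposed self-contained replacement for DKW via dyadic chaining does not work as written, however. With $I_j=[p2^{-j},p2^{-j+1}]$ the monotonicity gap $np2^{-j}$ equals $np/2$ at $j=1$, not $\Op(\sqrt{np})$; the claim that ``over the range $2^j\lesssim\sqrt{np}$ it is itself $\Op(\sqrt{np})$'' is false --- it is $O(\sqrt{np})$ only at the \emph{end} of that range, while at $j=1$ it is $\Theta(np)$. More generally, Chebyshev plus monotonicity cannot recover $\sup_{u\le p}|F_n(u)-nu|=\Op(\sqrt{np})$: making the monotonicity gap at most $C\sqrt{np}$ requires a mesh of order $\sqrt{np}$ points, and a Chebyshev union bound over these has total failure probability of order $\sqrt{np}/C^2$, which does not vanish for fixed $C$. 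If you want a self-contained version, either use an exponential tail bound (Bernstein/Hoeffding) at each mesh point, or switch to a maximal inequality as the paper does; otherwise keep the DKW citation and drop the alternative paragraph.
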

\begin{proof}
Let $A$ have a Poisson distribution with mean $n$, and given $A$, let $Z_1',\ldots,Z_A'$
be i.i.d.\ with distribution $Z$, so the set $\{Z_i'\}$ forms a Poisson process
on $[0,1]$. (If $Z$ has a density function $g(x)=G'(x)$,
then the intensity measure of the Poisson process is $ng(x)\dd x$.)
Writing $N'(x)$ for the number of $Z_i'$ in $[0,x]$, 
consider the random function $F(x)=N'(x)-nG(x)$. From basic properties
of Poisson processes, this function is a continuous-time martingale on $[0,1]$ with independent
increments.
Hence Doob's maximal inequality~\cite[Ch. III, Theorem 2.1]{Doob}
gives
\[
 \Pr(\sup_{x\le y} |F(x)|\ge t)  \le \frac{\Var(F(y))}{t^2} = \frac{nG(y)}{t^2},
\]
since $\Var(F(y))=\Var(N'(y))$, and up to an additive constant, $N'(y)$
is simply Poisson with mean $nG(y)$.
It follows that
\begin{equation}\label{evenP}
 \sup_{x\le y}|F(x)| = \Op(\sqrt{nG(y)}).
\end{equation}

The expected value of $|A-n|$ is $O(\sqrt n)$. When $A>n$, delete a random
subset of the points $\{Z_i'\}$ of size $A-n$. When $A<n$, add
$n-A$ i.i.d.\ new points to $\{Z_i'\}$ with distribution $Z$.
In this way we obtain a set of $n$ i.i.d.\ samples from $Z$.
Given $A$, the added/deleted points have distribution $Z$,
so the expected number in $[0,y]$ is $|A-n|G(y)$. Hence
the unconditional expectation of the number of points
added or deleted in $[0,y]$ is $O(\sqrt{n}G(y))=O(\sqrt{nG(y)})$.
Hence \eqref{even} follows from \eqref{evenP}.
\end{proof}

In our exploration process, the next vertex $v_{t+1}$ is always chosen with
probability proportional to its degree. Hence the (distribution of)
the random sequence $\sigma=(v_1,\ldots,v_n)$
has the following alternative description:
first assign a random order to all stubs. Then sort the vertices so that $v$
comes before $w$ if and only if $v$'s earliest stub comes before
$w$'s earliest stub.
In turn, we may realize the random order on the stubs by assigning i.i.d.\ $U[0,1]$
variables to the stubs; we shall call these variables \emph{stub values}.

For each $t$, there is a random `cut-off' $Z_t\in [0,1]$ so that a vertex $v$
is among $v_1,\ldots,v_t$ if and only if its smallest stub value
is at most $1-Z_t$. Fixing a cut-off value $z$, the expected
number of vertices with all stub values at least $1-z$ is exactly
$\sum_d n_d z^d$,
so we expect to have
\[
 n-t \sim \sum_d n_dZ_t^d.
\]

Let $f(z)=f_n(z)$ denote the probability generating function of the degree
distribution $\vd$,
so
\begin{equation}\label{fdef}
 f(z) = \sum_{d=1}^{\dmax} \frac{n_d}{n} z^d.
\end{equation}
Recall that $U_{d,i}$ denotes the number of unreached degree-$d$ vertices
after $i$ steps, i.e., the number of degree-$d$ vertices
\emph{not} among the first $i$ elements of our random order on the vertices.
\begin{theorem}\label{left}
Let $\vd=\vd_n$ be any degree sequence of length $n$
with all degrees between $1$ and some constant $\dmax$, and let $(v_1,\ldots,v_n)$
be the random order on the vertices defined above.
Define a function $\tau\mapsto z(\tau)$ from $[0,1]$ to $[0,1]$ by $f(z(\tau))+\tau=1$,
where $f$ is the probability generating function of $\vd$.
Then for any $t=t(n)\ge 1$ we have
\[
 \max_{1\le d\le \dmax}\max_{1\le i\le t} |U_{d,i} - n_d z(i/n)^d| = \Op(\sqrt{t}),
\]
where $n_d$ is the number of degree-$d$ vertices in $\vd$.
\end{theorem}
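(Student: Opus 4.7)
The plan is to realize the random order via i.i.d.\ uniform stub values and then reduce the theorem to empirical-process concentration via Lemma~\ref{O1}. Assign i.i.d.\ $U[0,1]$ variables to the stubs and let $M_v$ be the minimum stub value at vertex $v$, so the order $v_1,\ldots,v_n$ sorts vertices by $M_v$ and $v$ is unreached after step $i$ iff $M_v > M_{(i)} = 1 - Z_i$. For a degree-$d$ vertex, $M_v$ has distribution function $G_d(c) = 1 - (1-c)^d$, and the samples $\{M_v : d_v = d\}$ are i.i.d.\ and independent across degrees. Writing $R_d(c) = |\{v : d_v = d,\, M_v \le c\}|$, we have $U_{d,i} = n_d - R_d(1 - Z_i)$.

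The first step is to apply Lemma~\ref{O1} to each $R_d$ with the deterministic cutoff $y = K t/n$, where $K$ is a constant to be chosen large below. Since $G_d(y) \le \dmax y = O(t/n)$, the lemma gives
\[
 \sup_{c \le y} |R_d(c) - n_d G_d(c)| = \Op\bb{\sqrt{n_d G_d(y)}} = \Op(\sqrt t),
\]
and summing over the $O(1)$ relevant values of $d$ yields $\sup_{c \le y} |R(c) - n F(c)| = \Op(\sqrt t)$, where $R = \sum_d R_d$ and $F(c) = 1 - f(1-c)$.

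Next I would verify that every random cutoff $1-Z_i$ of interest lies in $[0,y]$: since $F(c) \sim \mu_1 c$ as $c \downto 0$ and $\mu_1 = \Theta(1)$ by \eqref{a2}, taking $K$ large gives $nF(y) \ge 2t$, and combined with the concentration above, $R(y) \ge t$ with probability arbitrarily close to $1$. On this high-probability event $M_{(t)} \le y$, so $1-Z_i \in [0,y]$ for every $i \le t$. The defining relation $R(1 - Z_i) = i$ then yields $f(Z_i) = 1 - i/n + \Op(\sqrt t / n)$, and inverting via the fact that $f'$ is bounded below by a positive constant on an interval $[z(t/n),1]$ with $z(t/n)$ bounded away from $0$, I obtain $Z_i = z(i/n) + \Op(\sqrt t / n)$ uniformly in $i \le t$.

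Finally, on the good event,
\[
 U_{d,i} = n_d - R_d(1-Z_i) = n_d\bb{1 - G_d(1-Z_i)} + \Op(\sqrt t) = n_d Z_i^d + \Op(\sqrt t),
\]
and the mean value theorem gives $n_d|Z_i^d - z(i/n)^d| \le n_d \dmax |Z_i - z(i/n)| = \Op(\sqrt t)$, which completes the bound. The main delicacy is the bootstrapping in the second step: the maximal inequality demands a deterministic cutoff $y$, whereas the cutoffs $1-Z_i$ we actually care about are random. I handle this by fixing $y$ first and then using the concentration already established to push $M_{(t)}$ into $[0,y]$ whp; the remaining ingredients are routine.
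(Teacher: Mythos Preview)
Your argument is correct and uses the same core idea as the paper: realize the order via i.i.d.\ stub values and apply Lemma~\ref{O1} degree by degree. The paper's final step is a shade cleaner, though: instead of working with the \emph{random} cutoffs $1-Z_i$ and inverting $f$, it evaluates the empirical counts at the \emph{deterministic} points $x_i=1-z(i/n)$, which automatically lie in $[0,y]$ when $y$ is defined by $nG(y)=t$; then $|m(x_i)-i|\le E_y$ lets one pass from ``degree-$d$ vertices among the first $m(x_i)$'' (which is exactly $m_d(x_i)$) to ``degree-$d$ vertices among the first $i$''. This sidesteps both your bootstrapping step and the need for $f'$ to be bounded below, so it works uniformly for all $t\le n$ without requiring $z(t/n)$ to stay away from $0$.

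One small correction: $\mu_1=\Theta(1)$ follows directly from the theorem's own hypothesis $1\le d_i\le\dmax$ (indeed $1\le\mu_1\le\dmax$), not from \eqref{a2}, which is not assumed here.
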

\begin{proof}
Construct the sequence $v_1,\ldots,v_n$ from stub values as above.
Let $Z^{(d)}$ denote the distribution obtained by taking the minimum of
$d$ independent $U[0,1]$ random variables. 
Note that $Z^{(d)}$ has distribution function $G_d(x)=1-(1-x)^d$.

For each $d$, let $Z_{d,i}$ denote the minimum stub value
of the $i$th degree-$d$ vertex. Then $Z_{d,1},\ldots,Z_{d,n_d}$ are i.i.d.\ with
distribution $Z^{(d)}$.
Let $m_d(x)$ denote the number of degree-$d$ vertices whose
smallest stub value is at most $x$. Then by Lemma~\ref{O1}, for any (deterministic) $y=y(n)$
and any $d$ we have
\[
 E_{d,y} = \sup_{0\le x\le y} |m_d(x) - n_dG_d(x)| = \Op(\sqrt{n_dG_d(y)}).
\]
Summing over $d$, and using $\sum_{i=1}^k \sqrt{a_i}\le \sqrt{k} \sqrt{\sum a_i}$, we have
\[
 E_y = \sum_d E_{d,y} = \Op(\sqrt{nG(y)}),
\]
where
\begin{equation}\label{gf}
 G(x)=\sum_{d=1}^{\dmax}\frac{n_d}{n}G_d(x) = 1-f(1-x).
\end{equation}
Let $m(x)=\sum_d m_d(x)$. Then by the triangle inequality
we have
\begin{equation}\label{msup}
 \sup_{0\le x\le y} |m(x) - nG(x)| \le E_y.
\end{equation}

Define $y$ by $nG(y)=t$, so $E_y=\Op(\sqrt{t})$.
For $1\le i\le t$ set $x_i=1-z(i/n)$. From \eqref{gf} we have $G(x_i)=1-f(z(i/n))$,
so by the definition of the function $z$, we have $G(x_i)=i/n$,
i.e., $nG(x_i)=i$.
Note that $x_1<x_2<\cdots<x_t=y$.

From \eqref{msup}, for every $i\le t$ we have $|m(x_i)-i|\le E_y$.
Furthermore, the number of degree-$d$ vertices among the first $m(x_i)$ vertices,
namely $m_d(x_i)$, differs from $n_dG_d(x_i)$ by at most $E_{d,y}\le E_y$.
It follows that the number of degree-$d$ vertices among the first $i$ vertices
differs from $n_dG_d(x_i)$ by at most $2E_y$,
i.e.,
\[
 | (n_d-U_{d,i}) - n_dG_d(x_i) | \le 2E_y.
\]
The difference on the left is exactly $|U_{d,i}-n_dz(i/n)^d|$, so the result follows.
\end{proof}

Returning to our process, recall that the vertices $v_1,v_2,\ldots$ are chosen
according to the random distribution considered above.
Recall also that when \eqref{assump} holds, then $U_t=\Theta(n)$.

\begin{corollary}\label{c2}
Define $z(\tau)$ by $f(z(\tau))+\tau=1$.
Then for any $t\le c_0n/2$, writing $z$ for $z(i/n)$, we have
\begin{equation}\label{Ubd}
 \sup_{i\le t} \left| \frac{U_i}{n} - zf'(z) \right| =\Op(\sqrt{t}/n)
\end{equation}
and
\begin{equation}\label{etabd}
 \sup_{i\le t} \left|\E\bb{\eta_{i+1}-1\mid \cF_i} - \frac{zf''(z)}{f'(z)} \right| = \Op(\sqrt{t}/n).
\end{equation}
Furthermore,
if $t=o(n)$, then
\[
 \sup_{i\le t} \left|\Var\bb{\eta_{i+1}-1\mid \cF_i} - v_0 \right| = O(t/n) =o(1),
\]
where $v_0$ is defined by \eqref{v0def}.
\end{corollary}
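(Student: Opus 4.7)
The plan is to reduce all three estimates to Theorem~\ref{left} (or, for the variance bound, to an elementary counting argument), by expressing the quantities on the left as suitable degree-weighted sums of the $U_{d,i}$.

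For \eqref{Ubd}, I would start from $U_i=\sum_d dU_{d,i}$ and the bound $\max_{d,\,i\le t}|U_{d,i}-n_dz^d|=\Op(\sqrt{t})$ supplied by Theorem~\ref{left}. Dividing by $n$ and summing over the (at most $\dmax$) non-trivial degree classes gives
\[
 U_i/n = \sum_d d(n_d/n)z^d + \Op(\sqrt{t}/n),
\]
and by the definition \eqref{fdef} of $f$ the main term is exactly $zf'(z)$.

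For \eqref{etabd}, I would use \eqref{etaprob} to write
\[
 \E(\eta_{i+1}-1\mid\cF_i) = \frac{\sum_d d(d-1)U_{d,i}}{U_i}.
\]
Applying Theorem~\ref{left} again, the numerator equals $nz^2f''(z)+\Op(\sqrt{t})$, since $z^2f''(z)=\sum_d d(d-1)(n_d/n)z^d$, while by \eqref{Ubd} combined with \eqref{ubelow} the denominator equals $nzf'(z)+\Op(\sqrt{t})$ with $nzf'(z)\ge c_0n/2$. Forming the ratio, uniformly in $i\le t$, yields $zf''(z)/f'(z)+\Op(\sqrt{t}/n)$, as required.

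For the variance bound I would bypass Theorem~\ref{left} and use only the crude counts $|U_{d,i}-n_d|\le i$ and $|U_i-\mu_1n|\le\dmax\, i$. Since both $U_i$ and $\mu_1n$ are $\Theta(n)$, the size-biased conditional probabilities $dU_{d,i}/U_i$ differ from $dn_d/(\mu_1n)=\Pr(\eta=d)$ by $O(i/n)$, uniformly in $d$. Because $\eta_{i+1}$ takes values in $\{0,1,\dots,\dmax\}$, the first two conditional moments differ from the corresponding moments of $\eta$ by $O(i/n)$, and therefore the conditional variance differs from $v_0$ by the same order. Taking the supremum over $i\le t=o(n)$ gives the claim. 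The proof is essentially bookkeeping; the only point needing genuine care is the ratio step for \eqref{etabd}, where the $\Op(\sqrt{t})$ numerator error must be measured against a denominator of order $n$ (not of order $\sqrt{t}$), which is precisely what the standing assumption $t\le c_0n/2$ guarantees via \eqref{ubelow}.
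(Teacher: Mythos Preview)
Your proposal is correct and follows essentially the same approach as the paper: both derive \eqref{Ubd} and \eqref{etabd} directly from Theorem~\ref{left} by writing the relevant quantities as degree-weighted sums of the $U_{d,i}$, and both handle the variance bound by the crude observation that only $O(t)$ vertices have been used up. The only cosmetic difference is that for \eqref{etabd} the paper approximates the individual conditional probabilities $\Pr(\eta_{i+1}=d\mid\cF_i)$ by $q_d(z)=dn_dz^d/\sum_{d'}d'n_{d'}z^{d'}$ and then sums, whereas you approximate numerator and denominator separately and take the ratio; these are equivalent. One small remark: your claim $nzf'(z)\ge c_0n/2$ is not literally what \eqref{ubelow} says (that bounds $U_i$, not its deterministic approximation), but the needed lower bound $zf'(z)=\Theta(1)$ holds deterministically since $z\ge 1/2$ and $f'(z)\ge c_0 2^{-\dmax}$ in the range considered.
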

\begin{proof}
By Theorem~\ref{left} there is some random $K_t$ such that $K_t=\Op(\sqrt{t})$
and, for all $i\le t$ and all $d\le\dmax$,
\begin{equation}\label{ibd}
 |U_{d,i} - n_d z(i/n)^d| \le K_t.
\end{equation}
Fix $i$ and write $z$ for $z(i/n)$.
Noting that $zf'(z)=n^{-1}\sum_d d n_d z^d$, \eqref{Udef} and \eqref{ibd} give
\[
 |U_i/n-zf'(z)| = \frac{1}{n}\left| \sum_{d=1}^{\dmax} dU_{d,i} - \sum_d dn_d z^d\right| \le \dmax^2 K_t/n.
\]
Since $K_t=\Op(\sqrt{t})$, this proves \eqref{Ubd}.

For \eqref{etabd}, note that $\Pr(\eta_{i+1}=d\mid \cF_i)$ is by definition
equal to $dU_{d,i}/U_i$ (see \eqref{etaprob}), so when \eqref{ibd} holds,
$\Pr(\eta_{i+1}=d\mid \cF_i)$ is within $O(K_t/n)$
of
\[
 q_d(z) = \frac{d n_d z^d}{\sum_{d'} d' n_{d'}z^{d'}}.
\]
The bound \eqref{etabd} follows by noting that
\[
 \sum_d (d-1)q_d(z) = \frac{\sum d(d-1)n_d z^d}{\sum_d d n_dz^d} = \frac{z^2f''(z)}{zf'(z)} = \frac{zf''(z)}{f'(z)}.
\]
The variance bound is immediate from the fact that by time $t$ we have `used up' $O(t)$ vertices
of each degree, changing the conditional variance by at most $O(t/n)$.
\end{proof}

Let us now define the idealized trajectory that we have in mind.
Recall that $f(z)=f_n(z)$ is defined by \eqref{fdef}, and
$z=z(\tau)$ by
\begin{equation}\label{ztau}
 f(z)+\tau =1
\end{equation}
for $0\le \tau\le 1$, so
\begin{equation}\label{dztau}
 \frac{\dd z}{\dd \tau}= -\frac{1}{f'(z)}.
\end{equation}
We shall think of $\tau$ as a rescaled time parameter, taking $\tau=t/n$,
but will write our trajectory as a function of $z$.
In the light of our assumption~\eqref{assump}, we need only consider $\tau\le c_0/2$.
It is easy to check that this implies $z\ge 1/2$. In fact, we can always assume
that $\tau=o(1)$ and so $z=1-o(1)$.

Let $\mu_1=f'(1)=n^{-1}\sum dn_d$ be the average degree in our graph $G=G_{\vd}^\mm$, and
define functions $x(\tau)$ and $u(\tau)$ by
\begin{equation}\label{xdef}
 x = zf'(z)-\frac{f'(z)^2}{\mu_1}
\end{equation}
and
\begin{equation}\label{udef}
 u = zf'(z).
\end{equation}
Using elementary calculus, it is straightforward to check that these functions
satisfy $x=0$ when $\tau=0$, i.e., when $z=1$, and
\begin{equation}\label{DE}
 \frac{\dd x}{\dd \tau} = \frac{\dd x}{\dd z}\frac{\dd z}{\dd \tau} = -1+\frac{z f''(z)}{f'(z)}\left(1-\frac{2x}{u}\right).
\end{equation}

Recall that $f$ may depend on $n$. Since at least $c_0n$ vertices have degree
between $1$ and $\dmax$, for $z\ge 1/2$ (which is the only range we consider), we have $f'(z)\ge c_02^{-\dmax}$,
so $f'(z)$ is bounded below away from zero. Also, any given derivative
of $f(z)$ is bounded by a constant depending only on $\dmax$.
Using \eqref{dztau} it follows easily that
the derivative of any fixed order of $x$ with respect to $\tau$
is bounded uniformly in (large enough) $n$.

One can check that
\[
 -(f')^3\mu_1\frac{\dd^2 }{\dd\tau^2} x = 2(f')^2f'''+\mu_1 z (f'')^2-\mu_1 f'f''-\mu_1 z f'f'''.
\]
Substituting $z=1$ and noting that $\mu_i=f^{(i)}(z)|_{z=1}$, we have
\[
 -\mu_1^4\left.\frac{\dd^2 }{\dd\tau^2} x\right|_{z=1}
  = 2\mu_1^2\mu_3+\mu_1 \mu_2^2-\mu_1^2 \mu_2-\mu_1^2 \mu_3 = \mu_1(\mu_1\mu_3+\mu_2^2-\mu_1\mu_2).
\]
By assumption $\la=\mu_2/\mu_1\sim 1$, and as noted in Section~\ref{sec_intro},
$\mu_3=\Theta(1)$ (see~\eqref{vo1}). Hence
\begin{equation}\label{xx0}
 \left.\frac{\dd^2 }{\dd\tau^2} x\right|_{\tau=0} \sim -\frac{\mu_3}{\mu_1^2} = -\Theta(1).
\end{equation}

Let us collect together some basic properties of the trajectory $x$. We write
$\xdot$ for the derivative of $x$ with respect to $\tau$.

\begin{lemma}\label{traj}
Suppose that $\vd$ satisfies the assumptions \eqref{a1} and \eqref{a2}, and that
$\la=\la(\vd)\to 1$. Then

\begin{romenumerate}
\item $x(0)=0$,

\item $\xdot(0)=\la-1$ and

\item $\ddot{x}(\tau) = -\frac{\mu_3}{\mu_1^2} +o(1)$, uniformly in $\tau=o(1)$.
\newcounter{local}
\setcounter{local}{\value{enumi}}
\end{romenumerate}
\noindent
Suppose in addition that $\eps=\la-1>0$ and that $\eps^3 n\to\infty$,
and let $\rho=\rho_n$ be defined by \eqref{rhodef}. Then also
\begin{romenumerate}
\setcounter{enumi}{\value{local}}
\item $x(\rho)=0$,

\item\label{v} $\rho\sim\frac{2\mu_1^2}{\mu_3}\eps$,

\item $\xdot(\rho)\sim -\eps$ and

\item $x(\tau)>\eps \tau/2$ whenever $0\le\tau=o(\eps)$.
\end{romenumerate}
\end{lemma}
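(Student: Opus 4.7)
The plan is to dispatch (i)--(iii) by direct calculation from \eqref{xdef} and \eqref{DE}, then use a uniform second-order Taylor expansion of $x(\tau)$ around $\tau=0$ to obtain the remaining four statements, with (iv) verified by a separate algebraic manipulation.

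For (i), substituting $\tau=0$ (hence $z=1$) into \eqref{xdef} gives $x(0)=f'(1)-f'(1)^2/\mu_1=\mu_1-\mu_1^2/\mu_1=0$. For (ii), evaluating \eqref{DE} at $\tau=0$, using $z=1$, $x=0$, $u=f'(1)=\mu_1$, gives $\dot x(0)=-1+f''(1)/f'(1)=\mu_2/\mu_1-1=\la-1$. For (iii), the displayed identity \eqref{xx0} yields $\ddot x(0)=-\mu_3/\mu_1^2+o(1)$ (using $\la\to 1$ to absorb the $\mu_2^2-\mu_1\mu_2$ term). To upgrade this to a uniform statement for $\tau=o(1)$, I would note that on $[0,c_0/2]$ the remark following \eqref{DE} gives that any fixed-order derivative of $x$ in $\tau$ is uniformly bounded in $n$; in particular $\dddot x$ is uniformly bounded, so $\ddot x$ is Lipschitz in $\tau$ uniformly in $n$, giving $\ddot x(\tau)-\ddot x(0)=O(\tau)=o(1)$ for $\tau=o(1)$.

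For (iv), the key observation is that if $z_*$ is the smallest solution in $[0,1]$ of \eqref{prerho}, i.e.\ $z_*=f'(z_*)/\mu_1$, then plugging $f'(z_*)=z_*\mu_1$ into \eqref{xdef} gives $x=z_*\cdot z_*\mu_1-(z_*\mu_1)^2/\mu_1=0$. On the other hand, by \eqref{ztau} the $\tau$ corresponding to this $z_*$ is $1-f(z_*)$, which equals $\rho$ by \eqref{rhodef}. So $x(\rho)=0$. For (v), I would combine parts (i)--(iii) with Taylor's theorem to get the uniform expansion
\[
 x(\tau)=\eps\tau-\tfrac{\mu_3}{2\mu_1^2}\tau^2+o(\tau^2)\quad\text{and}\quad \dot x(\tau)=\eps-\tfrac{\mu_3}{\mu_1^2}\tau+o(\tau),
\]
valid uniformly for $\tau=o(1)$. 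A separate Taylor expansion of $g(z)=f'(z)/\mu_1-z$ around $z=1$ (using $g(1)=0$, $g'(1)=\eps$, $g''(1)=\mu_3/\mu_1$) gives $1-z_*\sim 2\mu_1\eps/\mu_3\to 0$, hence $\rho=1-f(z_*)\sim\mu_1(1-z_*)\sim 2\mu_1^2\eps/\mu_3$, which is (v) and in particular shows $\rho=o(1)$ so the Taylor expansion of $x$ is applicable at $\tau=\rho$.

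Statements (vi) and (vii) then fall out of the Taylor expansion. For (vi), plugging $\tau=\rho\sim 2\mu_1^2\eps/\mu_3$ into $\dot x$ gives $\dot x(\rho)=\eps-(\mu_3/\mu_1^2)(2\mu_1^2\eps/\mu_3)+o(\eps)=-\eps+o(\eps)\sim -\eps$. For (vii), when $\tau=o(\eps)$ the quadratic term is $O(\tau^2)=o(\eps\tau)$, so $x(\tau)=\eps\tau(1+o(1))>\eps\tau/2$ for $n$ large enough (uniformly in $\tau$ in the stated range). The main pieces of bookkeeping to be careful about are (a) making the $o(1)$ in (iii) truly uniform over $\tau=o(1)$ via the bounded-derivatives remark, and (b) justifying that the $o(\tau^2)$ error in the Taylor expansion of $x$ is uniform in $n$ for $\tau$ on the relevant scale, both of which reduce to the uniform bounds on higher derivatives of $x$ established right after \eqref{DE}.
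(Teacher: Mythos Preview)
Your proposal is correct and follows essentially the same route as the paper: (i)--(iii) by direct computation from \eqref{xdef}, \eqref{DE} and \eqref{xx0} together with the uniform bound on the third derivative, (iv) by recognizing that $z_*=f'(z_*)/\mu_1$ forces $x=0$, and (v)--(vii) by Taylor expansion. The one place you add genuine content is in (v), where you independently expand $g(z)=f'(z)/\mu_1-z$ to establish $\rho=o(1)$ \emph{before} invoking the Taylor expansion of $x$ at $\tau=\rho$; the paper simply asserts that (v)--(vii) follow from (i)--(iv) and Taylor's theorem, leaving this step implicit.
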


\begin{proof}
We have noted (i) already. Substituting $z=1$ (corresponding to $\tau=0$) into \eqref{DE} gives 
(ii). (iii) follows from \eqref{xx0} and the fact (noted above) that the third
derivative of $x$ is uniformly bounded over $\tau=o(1)$.

Turning specifically to the supercritical case,
(iv) follows easily from \eqref{xdef} and \eqref{rhodef}.
Indeed, recalling that $f$ is the generating function of our
degree distribution, \eqref{prerho} says exactly that
$z$ is the smallest positive solution to $z=f'(z)/\mu$.
From \eqref{xdef} we have $x=0$ at this value of $z$.
Now \eqref{rhodef} says that $\rho=1-f(z)$ which, by our change
of variable formula \eqref{ztau}, is exactly the corresponding
value of $\tau$.

(v)--(vii) follow from (i)--(iv) and Taylor's Theorem.
\end{proof}

For $1\le t\le c_0n/2$ set $x_t=n x(t/n)$ and $u_t=n u(t/n)$;
our aim is to show that $X_t$ will be close to $x_t$ and
$U_t$ close to $u_t$; the functions $x$ and $u$ are the corresponding
`scaling limits'.
Since, as a function of $\tau$, $x$ has uniformly bounded second derivative,
we have
\[ 
 x_{t+1}-x_t = \left.\frac{\dd x}{\dd \tau}\right|_{\tau=t/n} +O(1/n).
\]
Hence, from \eqref{DE}, writing $z$ for $z(t/n)$, and defining $w_t=\frac{zf''(z)}{f'(z)}$, we have
\begin{equation}\label{xdelta}
 x_{t+1}-x_t = -1+w_t\left(1-\frac{2x_t}{u_t}\right) +O(1/n),
\end{equation}
which is strongly reminiscent of \eqref{Xdelta}.
Our aim is to show that $(X_t)$ will whp remain close to $(x_t)$,
and use this, and the asymptotic normality of the deviations,
to prove Theorem~\ref{th1}.

For the rest of the section we fix some $\tmax=\tmax(n)=o(n)$ (see the next
two sections for specific values).
In what follows, we shall only consider values of $t$ up to $\tmax$.

Set
\[
 D_t =  \E( X_{t+1} - X_t \mid \cF_t ),
\]
noting that $D_t$ is random.
Corollary~\ref{c2} shows that for any deterministic $t=t(n)$ with $t\le \tmax$,
there is some random $K_t$ satisfying
\begin{equation}\label{Abd}
 K_t=\Op(\sqrt{t}/n)
\end{equation}
such that $|U_i-u_i|/n\le K_t$ and $|\E(\eta_{i+1}-1 \mid \cF_i) - w_i|\le K_t$
for all $i\le t$. Recalling that $U_i\ge c_0 n/2$
in the range we consider, and noting that $\eta_{i+1}$ is bounded by $\dmax$,
it follows using \eqref{Xdelta} and \eqref{xdelta} that for $i\le t\le \tmax$ we have
\begin{equation}\label{Dclose}
 |D_i-(x_{i+1}-x_i)| \le c\left(\frac{|X_i-x_i|}{n} +K_t + \frac{C_i}{n}\right),
\end{equation}
for some constant $c$ that depends only on $\dmax$ and $c_0$.
Since $X_0=0$, we may write $X_t$ as
\[
 X_t = \sum_{i<t} (D_i+\Delta_i) = \sum_{i<t} D_i + S_t,
\]
where
\[
 S_t = \sum_{i<t}\Delta_i.
\]

Let
\begin{equation}\label{tXdef}
 \tX_t = x_t + S_t,
\end{equation}
which we shall think of as a (rather precise) random approximation to $X_t$,
and define the `error term' $E_t$ by
\begin{equation}\label{Etdef}
 E_t=X_t-\tX_t = \sum_{i<t} D_i -x_i.
\end{equation}
 
Recall that $x_t$ is deterministic. The key point is that the
distribution of $S_t$ is easy to control, since
$(S_t)$ is a martingale with bounded differences.
Let
\[
 M_t = \max_{0\le i\le t} |S_i|.
\]
\begin{lemma}\label{Smax}
For any (deterministic) $t=t(n)$ and any $m\ge 0$
we have
\begin{equation}\label{Mb1}
 \Pr(M_t\ge m)\le \dmax^2 t/m^2.
\end{equation}
In particular, $M_t=\Op(\sqrt{t})$.
Furthermore, for any $\alpha=\alpha(n)>0$ and any $\omega=\omega(n)\to\infty$,
the event $\{ M_t\le \alpha t/4 $ for all $\omega/\alpha^2\le t\le \tmax\}$ holds whp.
\end{lemma}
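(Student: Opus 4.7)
The plan is to apply Doob's $L^2$ maximal inequality to the martingale $(S_t)$ for the single-time bound \eqref{Mb1}, and then upgrade this to a uniform-in-$t$ statement via a dyadic chaining argument exploiting the monotonicity of $M_t$.

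First I would record that $(S_t)$ is a zero-mean martingale with respect to $(\cF_t)$: since $D_i=\E(X_{i+1}-X_i\mid\cF_i)$, the increments $\Delta_i=(X_{i+1}-X_i)-D_i$ are martingale differences by construction. From \eqref{Xdiff} we have $X_{i+1}-X_i=\eta_{i+1}-2-2\newbe_{i+1}$, with $\eta_{i+1}\le \dmax$ and $\newbe_{i+1}$ bounded by a constant multiple of $\dmax$ (each back-edge in check (i) or (ii) uses up one of the at most $\eta_{i+1}-1$ stubs attached to $v_{i+1}$). Hence $|\Delta_i|$ is bounded by a constant depending only on $\dmax$, and $\E(\Delta_i^2\mid\cF_i)\le \dmax^2$, after adjusting $\dmax$ by an absolute constant if need be. Doob's inequality then gives
\[
 \Pr(M_t\ge m)\le \frac{\E(S_t^2)}{m^2} = \frac{\sum_{i<t}\E(\Delta_i^2)}{m^2}\le \frac{\dmax^2 t}{m^2},
\]
which is \eqref{Mb1}. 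Taking $m=C\sqrt{t}$ with $C$ large in terms of $\delta$ immediately yields $M_t=\Op(\sqrt{t})$.

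For the uniform statement the key observation is that $M_t$ is non-decreasing in $t$, so control at a sparse set of times suffices. I would cover $[\omega/\alpha^2,\tmax]$ by dyadic blocks $[t_k,t_{k+1}]$ with $t_k=2^k\lceil\omega/\alpha^2\rceil$, $k\ge 0$ (discarding $k$ with $t_k>\tmax$). For any $t\in[t_k,t_{k+1}]$, monotonicity gives $M_t\le M_{t_{k+1}}$, while $\alpha t/4\ge \alpha t_k/4=\alpha t_{k+1}/8$, so the event that $M_t\ge \alpha t/4$ for some $t$ in this block is contained in $\{M_{t_{k+1}}\ge \alpha t_{k+1}/8\}$. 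Applying \eqref{Mb1} at $t_{k+1}$ with $m=\alpha t_{k+1}/8$ bounds this probability by $64\dmax^2/(\alpha^2 t_{k+1})=O(\dmax^2/(2^k\omega))$. A union bound over $k\ge 0$ gives a total probability $O(\dmax^2/\omega)=o(1)$, as required.

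There is no real obstacle in this proof; the only mild point is the dyadic cover, where the monotonicity of $M_t$ is what allows one to pass from the pointwise Doob estimate to an estimate holding uniformly in $t$ throughout the entire window, with the geometric decay of the bounds absorbing the infinite union.
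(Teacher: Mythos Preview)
Your proof is correct and follows essentially the same route as the paper: Doob's $L^2$ maximal inequality for \eqref{Mb1}, then a dyadic decomposition of $[\omega/\alpha^2,\tmax]$ combined with the monotonicity of $M_t$ and a union bound for the uniform statement. The paper phrases the dyadic step with the events $\cE_i=\{M_{t_{i+1}}\ge \alpha t_i/4\}$ where $t_i=2^i\omega/\alpha^2$, which is exactly your event $\{M_{t_{k+1}}\ge \alpha t_{k+1}/8\}$ rewritten, and obtains the same geometric bound $O(\dmax^2/(2^i\omega))$.
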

\begin{proof}
Since the differences $\Delta_i$ are bounded by $\dmax$, their (conditional) variances
are at most $\dmax^2$, so $\Var(S_t)\le \dmax^2 t$.
Applying Doob's maximal inequality gives \eqref{Mb1}. That $M_t=\Op(\sqrt{t})$
follows immediately.

For the last part, let $t_i=2^i\omega/\alpha^2$, and let $\cE_i$ be the event
that $M_{t_{i+1}}\ge \alpha t_i/4$. Then \eqref{Mb1} gives
\[
 \Pr(\cE_i)\le \frac{\dmax^2t_{i+1}}{\alpha^2t_i^2/16} \le \frac{32\dmax^2}{2^i\omega}.
\]
It follows that $\sum \Pr(\cE_i)=o(1)$, so whp no $\cE_i$ holds, giving the result.
\end{proof}
\begin{lemma}\label{norm}
Let $t=t(n)$ satisfy $t\to\infty$ and $t=o(n)$.
Then $S_t$ is asymptotically normal with mean $0$ and variance $v_0t$,
where $v_0$ is given by \eqref{v0def}.
\end{lemma}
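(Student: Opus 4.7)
The plan is to apply a standard martingale central limit theorem to the martingale $S_t=\sum_{i<t}\Delta_i$, which has bounded differences and whose conditional variances are, by the last part of Corollary~\ref{c2}, all close to $v_0$.

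First I would note that $|\Delta_i|=|X_{i+1}-X_i-D_i|\le 2\dmax+2$ deterministically, since at each step $\eta_{i+1}\le\dmax$, the number $\newbe_{i+1}$ of back-edges discovered is bounded by $\dmax$, and $D_i$ is a conditional expectation of a bounded quantity. Hence the scaled increments $\Delta_i/\sqrt{v_0 t}$ are uniformly $O(t^{-1/2})$, so the Lindeberg-type condition holds trivially: for any $\delta>0$, the truncated second moments $\E(\Delta_i^2\mathbf{1}\{|\Delta_i|>\delta\sqrt{v_0 t}\}\mid\cF_i)$ vanish once $t$ is large enough.

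Next I would control the sum of conditional variances. Since $D_i=\E(X_{i+1}-X_i\mid\cF_i)$ is $\cF_i$-measurable, $\Var(\Delta_i\mid\cF_i)=\Var(X_{i+1}-X_i\mid\cF_i)$. The last part of Corollary~\ref{c2} (which uses $t=o(n)$) gives
\[
 \sup_{i<t}\bigl|\Var(X_{i+1}-X_i\mid\cF_i)-v_0\bigr|=o(1),
\]
so
\[
 V_t := \sum_{i<t}\Var(\Delta_i\mid\cF_i)=v_0 t+o(t).
\]
In particular $V_t/(v_0 t)\to 1$ in probability.

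With bounded differences, the Lindeberg condition satisfied, and $V_t/(v_0 t)\xrightarrow{p} 1$, the martingale central limit theorem (for example, the form given in Hall and Heyde, \emph{Martingale Limit Theory and its Application}, Corollary~3.1) yields $S_t/\sqrt{v_0 t}\dto N(0,1)$, which is exactly the assertion of the lemma. The only step requiring any care is the conditional-variance convergence, but this has already been done in Corollary~\ref{c2}; everything else is automatic from the fact that $(\Delta_i)$ is a bounded martingale difference sequence.
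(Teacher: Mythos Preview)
Your proposal is correct and follows exactly the paper's approach: bounded martingale differences plus conditional variances asymptotic to $v_0$ (from Corollary~\ref{c2} together with the observation in Section~\ref{sec_explore} that $\Var(X_{t+1}-X_t\mid\cF_t)=\Var(\eta_{t+1}\mid\cF_t)+o(1)$ since $\newbe_{t+1}=0$ whp), then a standard martingale CLT. The only difference is cosmetic---you cite Hall--Heyde where the paper cites Brown---and you spell out the Lindeberg condition explicitly, which the paper leaves implicit.
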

\begin{proof}
Recall that $(S_t)$ is a martingale with $S_0=0$,
and the differences $\Delta_i$ are bounded.
Moreover, by Corollary~\ref{c2},
$\Var(\Delta_{i+1}\mid \cF_i)\sim v_0$ when $i=o(n)$.
The result thus follows from a standard martingale central limit theorem 
such as Brown~\cite[Theorem 2]{Brown}.
\end{proof}

We now turn to the error terms $E_t$.
Using the second expression for $E_t$ in \eqref{Etdef},
summing \eqref{Dclose} over $1\le i<t$, and noting
that $|X_i-x_i|=|S_i+E_i|$, for $t\le \tmax$ we see that
\begin{eqnarray*}
 |E_t| &\le& c\left(\frac{t}{n}\max_{i<t}|X_i-x_i| +tK_t+\frac{tC_{t-1}}{n}\right) \\
  &\le& \frac{c t}{n}\max_{i<t}|E_i| +c\left(\frac{tM_t}{n} +tK_t+\frac{tC_{t-1}}{n}\right).
\end{eqnarray*}
Since $ct/n\le c\tmax/n=o(1)$ is less than $1/2$
if $n$ is large, then for $n$ large enough (which we assume
from now on), it follows by induction on $i$
that
\begin{equation}\label{Ei1}
 |E_i| \le 2c\left(\frac{tM_t}{n}+tK_t+\frac{tC_{t-1}}{n}\right)
\end{equation}
for $0\le i\le t$.

Set
\begin{equation}\label{M*def}
 M^*_t = 2c\left(\frac{tM_t}{n} + tK_t\right).
\end{equation}
Note that $M^*_t$ is increasing in $t$. Also, for $0\le t\le \tmax$,
\eqref{Ei1} (applied with $i=t$) gives
\begin{equation}\label{Ei1a}
 |E_t| \le M^*_t + 2ct C_{t-1}/n.
\end{equation}
For any (deterministic) $t=t(n)$,
it follows from \eqref{Abd} and Lemma~\ref{Smax} that
\begin{equation}\label{M*}
 M^*_t = \Op(t^{3/2}/n).
\end{equation}

\section{The critical case}\label{sec_crit}

Using the bounds from the previous section, it is very easy to prove Theorem~\ref{th0}.
The hardest part is establishing that the description of the limit actually makes
sense; this follows from the results of Aldous~\cite{Aldous} by simple rescaling.
Since all probabilistic technicalities are same as in~\cite{Aldous}, we shall
not mention them. Indeed, we take a combinatorial point of view in the estimates
that follow.

\begin{proof}[Proof of Theorem~\ref{th0}]
Recall that by assumption $n^{1/3}(\la-1)\to\alpha_1\in \R$, while $\mu_3/\mu_1\to\alpha_0$
and $\mu_3/\mu_1^2\to\alpha_2$ with $\alpha_0,\alpha_2>0$.
For the moment, let $\omega$ be a large constant. A little later
we shall allow $\omega$ to tend to infinity slowly.

Define a random function $S(s)$ on $[0,\omega]$ by setting $S(t/n^{2/3})=n^{-1/3} S_t$
for $0\le t\le \omega n^{2/3}$, and interpolating linearly between these values.
Recall that $S_t$ is a martingale with bounded differences
and that for $t\le \omega n^{2/3}=o(n)$, the conditional variances
of the differences are $v_0+o(1)\sim \mu_3/\mu_1 \sim \alpha_0$ (see \eqref{vo1}).
It follows easily that $S$ converges to $\alpha_0^{1/2} W$ where
$W$ is a standard Brownian motion on $[0,\omega]$, in the sense that these
random functions can be coupled so that $\sup_{s\in [0,\omega]} |S(s)-\alpha_0^{1/2}W(s)|$
converges to 0 in probability.
(To see this, one can apply a functional martingale central limit theorem,
or simply subdivide into suitable short intervals and use a standard
martingale CLT.)

Recalling that $\tX_t=x_t+S_t=nx(t/n)+S_t$,
define a random function $\tX(s)$ on $[0,\omega]$ by setting
\[
 \tX(s)=n^{-1/3} \tX_t = n^{2/3}x(sn^{-1/3}) + S(s)
\]
whenever $s=t/n^{2/3}$ for integer $t\le \omega n^{2/3}$, and again interpolating
linearly. Note that $\tX$ is given by adding a deterministic function to $S$.
From Lemma~\ref{traj} and Taylor's theorem, we have
\begin{eqnarray*}
 n^{2/3}x(sn^{-1/3}) &=& 0 + n^{2/3}(\la-1)sn^{-1/3} - n^{2/3}\left(\frac{\mu_3}{\mu_1^2}+o(1)\right) \frac{s^2 n^{-2/3}}{2} \\
 &=& \alpha_1 s - \frac{\alpha_2}{2} s^2 +o(1),
\end{eqnarray*}
uniformly in $s\in [0,\omega]$. It follows that
$\tX$ converges to the inhomogeneous Brownian motion $W_\alphas$ defined in \eqref{Wdef}.

Setting $\tmax=\omega n^{2/3}$, by \eqref{M*} the quantity $M^*_{\tmax}$ defined in \eqref{M*def}
satisfies
\[
 M^*_{\tmax}=\Op(\tmax^{3/2}/n)=\Op(1)=\op(n^{1/3}).
\]
Pick some (deterministic)
$k=k(n)$ with $k/n^{1/3}\to\infty$.
Suppose that we explore more than $k$ components in the first $\tmax$ steps.
When we finish exploring the $k$th component we have $X_t=-2k$
and $C_t=k$. Since $\tmax=o(n)$, the bound \eqref{Ei1a} thus gives
$|E_t|\le \Op(1)+o(k)$. In particular, whp $|E_t|\le k$.
It then follows that $|\tX_t|\ge |X_t|-|E_t|\ge k$.
From the convergence of $(\tX_t)$ to $W_{\alpha_0,\alpha_1,\alpha_2}$
we have $\sup_{t\le \tmax}|\tX_t| = \Op(n^{1/3})$.
It follows that $C_{\tmax}=\Op(n^{1/3})$.
Using \eqref{Ei1a} again, this gives $\sup_{t\le \tmax}|E_t|=\Op(1)$.
In other words, the `idealized random trajectory' $\tX_t$
is an extremely close approximation to $X_t$ up to time $\tmax$.
Using $(X_t)$ to defining a function $X$ on $[0,\omega]$ as for $S$ and $\tX$ above,
it follows that $X$ also converges to $W_\alphas$.

Finally, recalling that $X_t=A_t-2C_t$, and that $X_t$ first hits
$-2k$ when we finish exploring the $k$th component,
i.e., just before $C_t$ increases to $k+1$, it is easy to check that for $t>0$
we have $A_t = X_t-\min_{i<t}X_i +O(1)$.
Defining a final function $A$ on $[0,\omega]$ using the $A_t$,
we see that $A$ converges to the function $B$ defined in \eqref{Bdef}.
So far $\omega$ was fixed, but convergence for all fixed $\omega$
implies convergence for $\omega\to\infty$ sufficiently slowly.
Now the sizes of the components explored during the first $\omega n^{2/3}$
steps are simply $n^{2/3}$ times the excursion lengths of $A$,
which converge to the excursion lengths of $B$. (This follows from
basic properties of $B$.)

For the component sizes, it remains only to show that 
when $\omega\to\infty$, for any $\delta>0$,
whp there are no components of size at least $\delta n^{2/3}$ in the rest
of the graph. This follows from Theorem~\ref{th-1} by an argument
similar to that at the end of Section~\ref{sec_above}.

Finally, we also claimed convergence for the nullities, or numbers of back-edges,
to appropriate Poisson parameters. For $t=o(n)$,
which implies $U_t\sim \mu_1 n$ and $\E(\eta_{t+1}-1\mid \cF_t)\sim \E(\eta-1)=\la\sim 1$,
from \eqref{Ebeta} we have
\[
 \E(\newbe_{t+1}\mid \cF_t) = (1+o(1))\frac{A_t}{\mu_1 n} +O(1/n).
\]
In terms of the rescaled function $A$ on $[0,\omega]$, this corresponds
to formation of back-edges at rate $A(s)/\mu_1$, and 
joint convergence of the component sizes and back-edge counts
to the excursion lengths and mark counts claimed in the theorem follows
easily as in~\cite{Aldous}.
\end{proof}

\section{The supercritical case}\label{sec_above}

We follow the argument in~\cite{BR_walk} closely, and attempt to use the same notation
where possible.
Throughout this section we fix a function $\omega=\omega(n)$ tending to infinity slowly,
in particular with $\omega^6=o(\eps^3 n)$ and $\omega^2=o(1/\eps)$. As in~\cite{BR_walk},
set
\[ 
 \sigma_0=\sqrt{\eps n}
\]
and
\[
 t_0=\omega\sigma_0/\eps,
\]
ignoring, as usual, the irrelevant rounding to integers. Note for later that $t_0=o(\eps n)$,
and that $t_0\ge \omega/\eps^2$ if $n$ is large.
We shall apply the results of Section~\ref{sec_traj} with, say
\begin{equation}\label{tmax}
 \tmax=4 \frac{\mu_1^2}{\mu_3}\eps n \sim 2\rho n =\Theta(\eps n);
\end{equation}
see Lemma~\ref{traj}\ref{v}.

\begin{lemma}\label{start}
Let $Z$ denote the number of components completely explored
by time $t_0$, and let $T_0=\inf\{t:X_t=-2Z\}$ be the time at which
we finish exploring the last such component.
Then $Z\le \omega/\eps\le \sigma_0/\omega$ and $T_0\le \omega/\eps^2\le\sigma_0/(\eps\omega)$ hold whp.
\end{lemma}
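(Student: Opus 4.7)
The plan is to show that no component completion occurs during the window $t\in[\omega/\eps^2,t_0]$, yielding $T_0\le\omega/\eps^2$, and then to bound $Z$ by evaluating the decomposition $X_{T_0}=x_{T_0}+S_{T_0}+E_{T_0}$ at the completion time itself. The strategy rests on the fact that in this window the idealized trajectory $x_t$ is of order $\eps t$ and positive, while both the martingale fluctuations $S_t$ and the error $E_t$ turn out to be much smaller than $\eps t$.

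I would assemble three ingredients from Section~\ref{sec_traj}. First, Lemma~\ref{Smax} applied with $\alpha=\eps$ gives, whp, $|S_t|\le\eps t/4$ uniformly for $\omega/\eps^2\le t\le\tmax$. Second, the hypotheses $\omega^2=o(1/\eps)$ and $\omega^6=o(\eps^3 n)$ imply $t_0=\omega\sqrt{n/\eps}=o(\eps n)$, so Lemma~\ref{traj}(vii) gives $x_t\ge\eps t/2$ for all $t\le t_0$. Third, since $M^*_t=\Op(t^{3/2}/n)$ is non-decreasing in $t$ and $M^*_{t_0}/(\omega/\eps)=\Op((\omega^2\eps/n)^{1/4})=\op(1)$, whp $M^*_t\le\eps t/8$ throughout $[\omega/\eps^2,t_0]$.

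The main step is a contradiction argument. Suppose some component completes at a time $\tau^*\in[\omega/\eps^2,t_0]$, so $X_{\tau^*}=-2C_{\tau^*}\le-2$. The first two ingredients give $x_{\tau^*}+S_{\tau^*}\ge\eps\tau^*/4$, hence $E_{\tau^*}=X_{\tau^*}-x_{\tau^*}-S_{\tau^*}\le-\eps\tau^*/4$. Combined with~\eqref{Ei1a} and the third ingredient, this forces $2c\tau^*C_{\tau^*-1}/n\ge\eps\tau^*/8$, i.e.\ $C_{\tau^*-1}\ge\eps n/(16c)$; but $C_{\tau^*-1}\le\tau^*\le t_0=o(\eps n)$, a contradiction. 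Therefore whp $T_0\le\omega/\eps^2$. The bound on $Z$ then comes from evaluating the decomposition at $T_0$: since $x_{T_0}\ge 0$, $2Z\le|S_{T_0}|+|E_{T_0}|$, where $|S_{T_0}|\le M_{\omega/\eps^2}=\Op(\sqrt{\omega}/\eps)$ by Lemma~\ref{Smax} and $|E_{T_0}|\le M^*_{\omega/\eps^2}+2c(\omega/\eps^2)Z/n$ with $M^*_{\omega/\eps^2}=\op(\sqrt{\omega}/\eps)$ (again using $\omega^2=o(1/\eps)$ and $\eps^{5/2}n\to\infty$) and coefficient $\omega/(\eps^2n)=o(1)$. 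Rearranging yields $Z=\Op(\sqrt{\omega}/\eps)=\op(\omega/\eps)$.

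The main obstacle is the apparent circularity in~\eqref{Ei1a}: the deviation bound on $E_t$ itself involves $C_{t-1}$, the very quantity we ultimately want to bound. The contradiction argument above sidesteps this neatly, because at the first putative completion time $\tau^*$ in the window the trivial deterministic bound $C_{\tau^*-1}\le\tau^*\le t_0=o(\eps n)$ is already strong enough to close the loop, with no prior probabilistic control on the component count required.
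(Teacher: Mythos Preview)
Your proof is correct and uses the same ingredients as the paper—Lemma~\ref{traj}(vii), Lemma~\ref{Smax}, and the error bound~\eqref{Ei1a}—but organizes them differently. The paper evaluates directly at $T_0$: using $2ct_0/n<1$ it crudely bounds $|E_t|\le k/8+C_t$ for all $t\le t_0$ (where $k=\omega/\eps$), so at $T_0$, where $C_{T_0}=Z$ and $X_{T_0}=-2Z$, the decomposition yields the single inequality $-Z+k/8\ge\tX_{T_0}\ge\eps T_0/4-k/8$, giving both $Z\le k/4$ and $T_0\le k/\eps$ at once. You instead break the circularity by a contradiction argument on the window $[\omega/\eps^2,t_0]$, feeding the crude deterministic bound $C_{\tau^*-1}\le\tau^*$ into~\eqref{Ei1a}, and then bound $Z$ separately afterwards. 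The paper's route is slightly more economical (one inequality instead of two steps); yours makes the resolution of the self-reference more explicit and incidentally gives the sharper $Z=\Op(\sqrt{\omega}/\eps)$, though this is not needed.
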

\begin{proof}
Set $k=\omega/\eps$ and $t_0'=\omega/\eps^2$.
It is easy to check that $k\le \sigma_0/\omega$ and $t_0'\le \sigma_0/(\eps\omega)$,
so it suffices to prove that $Z\le k$ and $T_0\le t_0'$ hold whp.

Note first that $t_0=\omega(n/\eps)^{1/2}$, so $t_0^{3/2}/n=\omega^{3/2}(\eps^3n)^{-1/4}=o(\omega^{3/2})=o(k)$.
Let $\cA_1$ denote the event that $M_{t_0}^* < k/8$, so $\cA_1$ holds whp by \eqref{M*}.
Let $\cA_2$ be the event that $M_{t_0'} < k/8$, and $\cA_3$ the event
that $M_t\le \eps t/4$ for all $t_0'\le t\le t_0$.
Then, noting that $k/\sqrt{t_0'}=\sqrt{\omega}\to\infty$,
the events $\cA_2$ and $\cA_3$ hold whp by Lemma~\ref{Smax}.

From Lemma~\ref{traj} we have $x_t=nx(t/n)\ge \eps t/2\ge 0$ for
all $t\le t_0=o(\eps n)$. Suppose that $\cA=\cA_1\cap \cA_2\cap \cA_3$ holds.
Then we have $x_t+S_t\ge x_t-M_t \ge \eps t/4-k/8$ for all $t\le t_0$,
using $\cA_2$ for $t\le t_0'$, and $\cA_3$ for $t>t_0'$.

For $t\le t_0$, from \eqref{Ei1a} and the fact that $t_0=o(n)$ we have
\[
 |E_t| \le M^*_{t_0}+ 2ctC_t/n
 \le k/8 +C_t
\]
if $\cA_1$ holds and $n$ is large.

At time $T_0$ we have $X_{T_0}=-2Z$ (see \eqref{ti}) and $C_{T_0}=Z$.
Suppose that $\cA$ holds. Then $\tX_{T_0}=X_{T_0}-E_t\le -Z+k/8$.
Since $\tX_{T_0}=x_{T_0}+S_{T_0}\ge \eps T_0/4-k/8$,
it follows that
\[
 -Z+k/8 \ge \tX_{T_0} \ge \eps T_0/4 -k/8.
\]
Rearranging gives $Z\le k/4-\eps T_0/4\le k/4$ and hence,
since $Z\ge 0$, $T_0\le k/\eps$, completing the proof.
\end{proof}

Let $T_1=\inf\{t:X_t=-2(Z+1)\}$, so $T_1$ is the first time after $t_0$ at which we finish
exploring a component. In particular, there is a component with $T_1-T_0$ vertices.
\begin{lemma}\label{T1T0}
$T_1-T_0$ is asymptotically normally distributed with mean $\rho n$
and variance $2\mu_1\eps^{-1}n$, where $\rho$ is defined by \eqref{rhodef}.
\end{lemma}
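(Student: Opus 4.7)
The plan is to use the decomposition $X_t = x_t + S_t + E_t$ from Section~\ref{sec_traj}, with $x_t = nx(t/n)$ the deterministic guide, $S_t$ a martingale supplying Gaussian fluctuations, and $E_t$ a lower-order error. Write $\sigma^2 = 2\mu_1 n/\eps$ for the target variance. The strategy has four parts: error control up to $\tmax$, localization of $T_1$ near $\rho n$ within scale $L = K\sigma$, Taylor inversion to solve $X_{T_1} = -2(Z+1)$ for $T_1 - \rho n$, and a martingale CLT at the deterministic time $\rho n$.

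For the error control, take $\tmax = 4\mu_1^2\eps n/\mu_3 \sim 2\rho n$. By Lemma~\ref{start}, $Z = \Op(\omega/\eps)$ and $T_0 = \Op(\omega/\eps^2)$, both $\op(\sigma)$ under our hypotheses on $\omega$. Because $X_t > -2(Z+1)$ strictly for every $T_0 < t < T_1$, no new component is started in this interval, so $C_t \le Z+1$ for all $t \le T_1$. Combining \eqref{Ei1a} with \eqref{M*} and the bound $\tmax Z/n = \Op(\omega)$ gives $\sup_{t \le \tmax}|E_t| = \Op(\eps^{3/2}\sqrt n) + \Op(\omega) = \op(\sigma)$.

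For the localization, Lemma~\ref{traj} gives $x(\rho) = 0$, $\xdot(\rho) \sim -\eps$ and bounded second derivative, so
\[
 x_t = -\eps(t-\rho n)(1+o(1)) + O\bb{(t-\rho n)^2/n}
\]
as $t/n \to \rho$. Let $K$ be a large constant and set $L = K\sigma$. At $t = \rho n + L$ we have $x_t \sim -K\sqrt{\eps n}$, while $|S_t| = \Op(\sqrt{\eps n})$ by Lemma~\ref{Smax} and $|E_t| + 2(Z+1) = \op(\sqrt{\eps n})$, so $X_t < -2(Z+1)$ whp for $K$ large; since $X$ cannot pass strictly below $-2(Z+1)$ without first attaining it, $T_1 \le \rho n + L$ whp. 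For the reverse bound on $(T_0, \rho n - L]$, I combine the positivity $x(\tau) \ge \eps\tau/2$ near $0$ (Lemma~\ref{traj}(vii)) with the envelope $|S_t| \le \eps t/4$ on $[\omega/\eps^2, \tmax]$ from the last part of Lemma~\ref{Smax}, together with the Taylor lower bound $x_t \ge \eps L/2$ near the right endpoint, to see that $X_t > -2(Z+1)$ throughout, so $T_1 > \rho n - L$ whp.

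On $\{|T_1 - \rho n| \le L\}$ the identity $X_{T_1} = -2(Z+1)$ and the Taylor expansion yield
\[
 T_1 - \rho n = \frac{S_{T_1} + E_{T_1} + 2(Z+1)}{\eps}(1+o(1)) + O\bb{(T_1-\rho n)^2/(\eps n)}.
\]
The quadratic remainder is $\Op(L^2/(\eps n)) = \op(\sigma)$ using $\eps^3 n \to \infty$, and $E_{T_1} + 2(Z+1) = \op(\sqrt{\eps n})$, so $T_1 - \rho n = S_{T_1}/\eps + \op(\sigma)$. Lemma~\ref{norm} applied at the deterministic time $\rho n$ gives $S_{\rho n} \dto N(0, v_0 \rho n)$, and by \eqref{vo1} and Lemma~\ref{traj}\ref{v}, $v_0 \rho n \sim (\mu_3/\mu_1)(2\mu_1^2/\mu_3)\eps n = 2\mu_1\eps n = \eps^2\sigma^2$. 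Doob's maximal inequality over a window of length $L$ gives $|S_{T_1} - S_{\rho n}| = \Op(\sqrt L) = \op(\sqrt{\eps n})$ (since $n\eps^3 \to \infty$), so $S_{T_1}/\eps$ has the same limit as $S_{\rho n}/\eps$, namely $N(0,\sigma^2)$; as $T_0 = \op(\sigma)$, the same normal limit holds for $T_1 - T_0 - \rho n$. The main obstacle is the uniform lower bound in the localization step, which must preclude early returns to $-2(Z+1)$ throughout the long interval where $x_t$ grows from $O(\omega/\eps)$ near $T_0$ up to $\Theta(\sqrt{\eps n})$ near $\rho n - L$, requiring a scale-dependent matching of $x_t$ against the Brownian envelope of $S_t$.
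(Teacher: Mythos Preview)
Your overall approach is the paper's: decompose $X_t = x_t + S_t + E_t$, control $E_t$ via \eqref{Ei1a} and \eqref{M*}, localize $T_1$ to a window around $\rho n$, Taylor-invert $X_{T_1}=-2(Z+1)$, and apply Lemma~\ref{norm} at the deterministic time $\rho n$. The error control, the upper-bound localization, the inversion, and the replacement of $S_{T_1}$ by $S_{\rho n}$ via Doob are all correct and mirror the paper.

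The lower-bound localization, however, has a genuine gap. Near the right of your interval, $t\sim\rho n=\Theta(\eps n)$, so the envelope $|S_t|\le \eps t/4$ from Lemma~\ref{Smax} only gives $|S_t|\le\Theta(\eps^2 n)$, which swamps your Taylor bound $x_t\ge \eps L/2=\Theta(K\sqrt{\eps n})$; the same mismatch occurs in the middle, where $x_t=\Theta(\eps^2 n)$ and the envelope is of the same order. The envelope is only sharp for $t$ near its threshold $\omega/\eps^2$; once $t\gg\eps^{-2}$ the crude bound $M_t=\Op(\sqrt t)$ is far stronger. (Your final sentence also misdescribes $x_t$ as growing to $\Theta(\sqrt{\eps n})$ near $\rho n-L$; in fact it rises to $\Theta(\eps^2 n)$ and descends back to $\Theta(\sqrt{\eps n})$.)

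The paper sidesteps all this by taking a \emph{growing} window $t_0=\omega\sigma_0/\eps$ with $\sigma_0=\sqrt{\eps n}$, so that on $[t_0,\,\rho n-t_0]$ the trajectory satisfies $x_t\ge 10\sqrt{\omega}\,\sigma_0$ uniformly (it is $\ge\omega\sigma_0/2$ at both ends and larger in between). Then the single global estimate $M_{t_1^+}=\Op(\sigma_0)$ suffices: whp $|X_t-x_t|\le\sqrt{\omega}\,\sigma_0$ throughout, giving $X_t>0$ on $[t_0,t_1^-]$ and hence $T_1>t_1^-$, with no scale-matching needed. Your fixed-$K$ window can be salvaged by replacing the envelope with $M_{\tmax}=\Op(\sigma_0)$ for $t$ beyond, say, $t_0$, and running a tightness-in-$K$ argument, but as written the ``main obstacle'' you flag is not actually resolved by the ingredients you list.
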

\begin{proof}
For $t\le T_1$ we have $C_t\le Z+1$, which is whp at most $\omega/\eps+1$ by Lemma~\ref{start}.
For $t\le \min\{T_1,\tmax\}$ it follows that $t C_t/n=O(\omega)$.
Since $\tmax^{3/2}/n=\Theta(\eps^{3/2}n^{1/2})$, the bounds \eqref{Ei1a}
and \eqref{M*} imply that whp
\begin{equation}\label{Ebd}
 \max_{t\le \min\{T_1,\tmax\}} |E_t| \le \omega \sqrt{\eps^3 n} \le \sigma_0/\sqrt{\omega},
\end{equation}
say.

Ignoring the irrelevant rounding to integers, let $t_1=\rho n$.
Let $t_1^-=t_1-t_0$ and $t_1^+=t_1+t_0$. Recalling \eqref{tmax},
we have $t_1^+\le\tmax=O(\eps n)=O(\sigma_0^2)$.
Hence, by Lemma~\ref{Smax}, $M_{t_1^+}=\Op(\sigma_0)$.
Since $X_t=x_t+S_t+E_t$ it follows that
\begin{equation}\label{close}
 \max_{t\le \min\{T_1,t_1^+\}} |X_t-x_t| \le \sqrt{\omega}\sigma_0
\end{equation}
holds whp.

Let $a=-\xdot(\rho)$, so from 
Lemma~\ref{traj},
\begin{equation}\label{eqa}
 a = -\xdot(\rho)  \sim \eps.
\end{equation}

Since $x(\rho)=0$ and $\ddot{x}$ is uniformly bounded,
recalling that $t_0=o(\eps n)$ 
it follows easily that $x_{t_1^-}$ and $x_{t_1^+}$ are
both of order $\eps t_0 = \omega\sigma_0$. To be concrete, if
$n$ is large enough, then we certainly have
\[
 x_{t_1^-} \ge 10\sqrt{\omega}\sigma_0 \hbox{\quad and\quad} x_{t_1^+}\le -10\sqrt{\omega}\sigma_0,
\]
say. By Lemma~\ref{traj} we have $x_{t_0}\ge \eps t_0/2\ge 10\sqrt{\omega}\sigma_0$.
Also $x_t$ increases near (within $o(\eps n)$ of) $t=0$, decreases near $t=t_1$,
and is of order $\Theta(\eps^2 n)$ in between. It follows that
$\inf_{t_0\le t\le t_1^-} x_t\ge 10\sqrt{\omega}\sigma_0$.
Let $\cB$ denote the event described in \eqref{close}.
Then, whenever $\cB$ holds, we have $X_t\ge 0$ for $t_0\le t\le \min\{T_1,t_1^-\}$.
Since $X_{T_1}\le -2(Z+1)<0$, and $T_1>t_0$ by definition, this implies $T_1> t_1^-$.

Recall from Lemma~\ref{start} that $Z\le \sigma_0$ whp. Suppose  $Z\le \sigma_0$,
$\cB$ holds, and $T_1>t_1^+$.
Then from $\cB$ and the bound on $x_{t_1^+}$ we have $X_{t_1^+}\le -9\sqrt{\omega}\sigma_0<-2Z-2$,
contradicting $T_1> t_1^+$. It follows that $T_1\le t_1^+$ holds whp.

We claim that
\begin{equation}\label{claim}
 \sup_{|t-t_1|\le t_0} |\tX_t-\tX_{t_1}-a(t_1-t)| = \op(\sigma_0).
\end{equation}
From \eqref{tXdef} we may write $\tX_t-\tX_{t_1}$ as
\[
 (x_t-x_{t_1}) + (S_t-S_{t_1}).
\]
Recalling that $t_1=\rho n$, $\xdot(\rho)=-a$ and that $\ddot{x}$ is uniformly bounded,
the difference
between the first term and $a(t_1-t)$ is $O(|t-t_1|^2/n)=O(t_0^2/n) = o(\sigma_0)$.
Since $(S_t-S_{t_1^-})_{t=t_1^-}^{t_1^+}$ is a martingale with final variance $O(t_0)=o(\sigma_0^2)$,
Doob's maximal inequality gives
$\sup_{|t-t_1|\le t_0}|S_t-S_{t_1}|=\op(\sigma_0)$, and \eqref{claim} follows.

Recall from Lemma~\ref{start} that $Z$, the number of components
explored by time $t_0$, satisfies $Z=\op(\sigma_0)$.
We have shown above that whp $T_1=\inf\{t:X_t=-2(Z+1)\}$ lies between $t_1^-$ and $t_1^+$.
From \eqref{Ebd}, $X_t$ is within $\op(\sigma_0)$ of $\tX_t$
at least until $T_1$.
It follows that at time $T_1$, we have $\tX_t=\op(\sigma_0)$.
Since $a=\Theta(\eps)$, \eqref{claim} thus gives
\begin{equation}\label{T1}
 T_1=t_1 +\tX_{t_1}/a +\op(\sigma_0/\eps).
\end{equation}
From Lemma~\ref{norm}, \eqref{tXdef} and the fact that $x(\rho)=0$, we have that $\tX_{t_1}$
is asymptotically normal with mean $0$ and variance $v_0 \rho n$.
Hence $\tX_{t_1}/a$ is asymptotically normal with mean 0 and variance
\[
 v_0 \rho n/a^2 \sim 2\mu_1\eps^{-1} n,
\]
using \eqref{vo1}, \eqref{rhosim} and \eqref{eqa}.
Since this variance is of order $\eps^{-1}n=\eps^{-2}\sigma_0^2$, the $\op(\sigma_0/\eps)$ error
term in \eqref{T1} is irrelevant, and $T_1$ is asymptotically normal
with mean $t_1=\rho n$ and variance $2\mu_1\eps^{-1}n$. Finally, from Lemma~\ref{start}
we have $T_0=\op(\sigma_0/\eps)$.
It follows that $T_1-T_0$ is asymptotically normal with the parameters claimed in the theorem.
\end{proof}

We are now ready to complete the proof of Theorem~\ref{th1}.

\begin{proof}[Proof of Theorem~\ref{th1}]
Let $\cC$ denote the component explored from time $T_0$ to $T_1$.
We have already shown that $\cC$ has the size claimed; two tasks
remain, namely to study the nullity of $\cC$, and to show that
there are no other `large' components.

Recall from \eqref{Ebeta} that the conditional expected
number of back-edges added at each step satisfies
\begin{equation}\label{ba}
 \E(\newbe_{t+1}\mid \cF_t) = \E(\eta_{t+1}-1\mid \cF_t)A_t/U_t +O(1/n).
\end{equation}
For the nullity, we shall consider only $t\le \min\{T_1,\tmax\}$,
recalling that whp $T_1\le \tmax=O(\eps n)$. In this range,
we have $C_t\le Z+1\le 2\omega/\eps$ whp by Lemma~\ref{start}.
Also $|E_t|\le\omega\sqrt{\eps^3 n}$ whp by~\eqref{Ebd}. Since $\eps\to 0$
and $\eps^3 n\to\infty$, if $\omega\to\infty$ sufficiently slowly
then both these bounds are $o(\sqrt{\eps n})$.
Recalling that $\tX_t=X_t-E_t=A_t-2C_t-E_t$ it follows that
whp
\begin{equation}\label{tXtAt}
 |\tX_t-A_t|=o(\sqrt{\eps n})
\end{equation}
throughout our range.

For $t\le \tmax$, Lemma~\ref{traj} gives $x_t=O(\eps^2 n)$,
and it follows easily from the bounds above that $\tX_t$ is whp $O(\eps^2 n)$.
Recalling that $f''(1)/f'(1)=\mu_2/\mu_1\sim 1$, and defining $z$ by $f(z)+t/n=1$ as before,
by Corollary~\ref{c2}
the maximum relative error (for $t\le \tmax$) in approximating $U_t$ by $nzf'(z)$ 
or $\E(\eta_{t+1}-1\mid \cF_t)$ by $zf''(z)/f'(z)$ is
$\Op(\sqrt{\eps n}/n)=\op(1/\sqrt{\eps^3 n})$. Using \eqref{ba} and \eqref{tXtAt}
it follows that whp
\begin{equation}\label{b1}
 D_{t+1}^* = \E(\newbe_{t+1}\mid \cF_t) = \frac{\tX_t}{n}\frac{f''(z)}{f'(z)^2} +o(\sqrt{\eps/n})
\end{equation}
for all $t\le T_1$.

Recalling that $x_{t_1}=0$,
the bounds in the proof of Lemma~\ref{T1T0} show that whp $|\tX_t|\le\omega \sigma_0$
for $t_1^-\le t\le T_1$, and that whp $T_1\le t_1^-+2t_0$.
It follows from \eqref{b1} that whp no more than, say, $\omega^2 \sigma_0t_0/n
=\omega^3\sigma_0^2/(\eps n)=\omega^3=o(\sqrt{\eps^3 n})$
back-edges are added between time $t_1^-$ and
time $T_1$. A similar bound holds for steps up to $t_0$ and for steps
between $T_1$ and $t_1=\rho n$.
Let $Y=Y_{\rho n}$ be the total number of back-edges found up to time $\rho n$.
Using \eqref{nullity}, it follows that
\begin{equation}\label{nC}
 |Y-n(\cC)| = \op(\sqrt{\eps^3 n}).
\end{equation}
Let us write $\newbe_{t+1}$ as $D_{t+1}^*+\Delta_{t+1}^*$, so by definition
$\E(\Delta_{t+1}^*\mid \cF_t)=0$. Then $Y=D^*+S^*$, where
$D^*=\sum_{t<\rho n} D_t^*$ and $S^*=\sum_{t<\rho n}\Delta_t^*$.
Note that $D^*$ is random. Recalling that $\tX_t=x_t+S_t$ and
that $\E S_t=0$, it follows from \eqref{b1} and the definition of $x_t=x(t/n)$ (see \eqref{xdef})
that
\[
 \E D^* = \sum_{t=0}^{\rho n-1} \left(zf'(z)-\frac{f'(z)^2}{\mu_1}\right) \frac{f''(z)}{f'(z)^2} +\op(\sqrt{\eps^3 n}),
\]
where, as usual, $z=z(t)$ is defined by $f(z)+t/n=1$.

It is not hard to see that the sum above sufficiently well approximated by the corresponding
integral. Recalling that with $\tau=t/n$ we have $\frac{\dd\tau}{\dd z}=-f'(z)$, it follows that
\[
 \E D^* = n\int_{z=z_1}^{1} (z-f'(z)/\mu_1)f''(z) \dd z +\op(\sqrt{\eps^3 n}),
\]
where $z_1$ corresponds to $\tau=\rho$. In other words, $z_1$ is the value
of $z$ defined in \eqref{prerho}, which as noted in the proof of Lemma~\ref{traj} satisfies $z_1=f'(z_1)/\mu_1$.
The integrand above is the derivative of $zf'(z)-f(z)-f'(z)^2/(2\mu_1)$.
It follows that
\begin{equation}\label{ED*}
 \E D^* = n\left(f(z_1)-\frac{\mu_1z_1^2}{2} -1 +\frac{\mu_1}{2}\right) + \op(\sqrt{\eps^3 n})
 = \rho^* n+\op(\sqrt{\eps^3 n}),
\end{equation}
recalling \eqref{rho*}. Since $\E\Delta^*=0$, this gives $\E Y=\rho^*n +\op(\sqrt{\eps^3n})$.

From Lemma~\ref{traj}\ref{v} and \eqref{dztau} it follows easily that
$\delta=1-z_1\sim \rho/\mu_1\sim 2\eps\mu_1/\mu_3$.
We may write $\rho^*$ as $f(1-\delta)-1+\mu_1\delta-\mu_1\delta^2/2$.
Expanding $f(z)$ about $z=1$, using
$f(1)=1$, $f'(1)=\mu_1$, $f''(1)=\mu_2=(1+\eps)\mu_1$, $f'''(1)=\mu_3$
and $f^{(4)}=O(1)$, a little calculation establishes that
\begin{equation}\label{rho*sim}
 \rho^* \sim  \frac{2\mu_1^3}{3\mu_3^2}\eps^3.
\end{equation}

We now turn to the variance and covariance estimates. Here we can be much less careful,
as a $o(1)$ relative error does not affect our conclusions.

Recall that $D^*$ is random. From \eqref{b1}, \eqref{ED*}
and the fact that $\tX_t=\E\tX_t+S_t$ we can write $D^*$
as $\rho^* n + D'+\op(\sqrt{\eps^3 n})$, where
\[
 D' = \sum_{t=0}^{\rho n-1} \frac{S_t}{n}\frac{f''(z)}{f'(z)^2}.
\]
Thus
\[
 Y = D^*+S^* = \rho^* n + D' +S^* + \op(\sqrt{\eps^3 n}).
\]
Throughout the relevant range, $f''(z)/f'(z)^2\sim f''(1)/f'(1)^2=\mu_2/\mu_1^2\sim 1/\mu_1$.
Since $S_t=\sum_{i<t} \Delta_i$, it follows that
\[
 D' = \sum_{i=0}^{\rho n-1} a_i \Delta_i,
\]
for some constants $a_i=a_i(n)$ satisfying
\[
 a_i\sim (\rho n-i)/(\mu_1 n).
\]
Since the $\Delta_i$ are martingale differences
with variances $v_0+o(1)$, it follows that
\[
  \Var(D') \sim \sum_{i=0}^{\rho n-1} v_0 a_i^2 \sim  \frac{v_0}{\mu_1^2} \sum_{i=0}^{\rho n-1}\frac{(\rho n-i)^2}{n^2}
 \sim \frac{v_0\rho^3}{3\mu_1^2} n \sim \frac{8\mu_1^3}{3\mu_3^2}\eps^3n,
\]
using \eqref{vo1} and \eqref{rhosim}.
Hence $D^*=\rho^*n +\Op(\sqrt{\eps^3 n}) = (1+\op(1))\rho^* n$.

Recalling that $\newbe_{t+1}\ge 2$ is much less likely that $\newbe_{t+1}=1$,
we have
\[
 \Var(\Delta^*_{t+1}\mid \cF_t) = \Var(\newbe_{t+1}\mid \cF_t) \sim \E(\newbe_{t+1}\mid \cF_t).
\]
Summing, it follows that
\[
 s^2=\sum_{t=0}^{\rho n-1} \Var(\Delta^*_{t+1}\mid \cF_t) =(1+\op(1)) D^* = (1+\op(1)) \rho^* n.
\]
In particular $s^2/(\rho^*n)$ converges in probability to $1$.

Recall that given $\cF_t$, $\Delta_{t+1}$ and $\Delta^*_{t+1}$ both have conditional
expectation $0$. Recalling \eqref{pform}, their conditional covariance,
which is just that of $\eta_{t+1}-1$ and $\newbe_{t+1}$, is asymptotically
$\Var(\eta_{t+1}-1\mid \cF_t)A_t/U_t\sim v_0A_t/U_t=\Theta(\eps^2)$.
This is much smaller than the square root of the product of their variances.
It follows easily that, after appropriate normalization, the joint distribution of
$S=S_{\rho n}=\sum_{t<\rho n}\Delta_t$
and $N'=D'+S^*=\sum_{t<\rho n} (a_t\Delta_t+\Delta_t^*)$ is asymptotically multi-variate normal, with
\[
 \Var(N') \sim \Var(D')+\Var(S^*) \sim \frac{10\mu_1^3}{3\mu_3^2}\eps^3n
\]
and
\[
 \Cov(N',S) \sim \Cov(D',S) \sim \sum_{t=0}^{\rho n-1} a_t v_0 \sim \frac{\rho^2 v_0}{2\mu_1}n \sim \frac{2\mu_1^2}{\mu_3}\eps^2 n.
\]
Recalling that the nullity of $\cC$ is $Y+\op(\sqrt{\eps^3 n})=\rho^*n+N'+\op(\sqrt{\eps^3 n})$
and that $|\cC| = \rho n+ S/a+\op(\sqrt{\eps n})$ where $a=-\xdot(\rho)\sim\eps$,
it follows that $|\cC|$ and $n(\cC)$ are jointly asymptotically normal with the means, variances
and covariance claimed in Theorem~\ref{th1}.

It remains only to prove that all  components other than $\cC$
have size bounded by $\Op\bb{\eps^{-2}\log(\eps^3 n)}=o(\eps n)$
as in \eqref{12}.
Lemma~\ref{start} shows that $T_0=\Op(\eps^{-2})$, so whp by time $T_1$
we have found no second component larger than this.

Let us stop the exploration at time $T_1$. Then the unexplored part of the graph
is simply the configuration multigraph on the degree sequence $\vd'$ given
by the vertices not yet reached.
Note that $\la(\vd')$ is exactly the expected value, given the history, of the degree $\eta_{T_1+1}$ 
of the vertex about to be chosen.
Since we have explored $O(\eps n)=o(n)$
vertices, $\vd'$ satisfies the assumption \eqref{a2} (with a slightly reduced $c_0$),
and it is still bounded. Since $\sqrt{\eps n}/n=o(\eps)$, by Corollary~\ref{c2}
$\E(\eta_{T_1+1}-1\mid T_1,\cF_{T_1})=\xdot(\rho)+\op(\eps)$,
so we find that $\la(\vd')=1-a+\op(\eps)$ with $a$ as in \eqref{eqa}. Since $a=\Theta(\eps)$,
Theorem~\ref{th-1} thus tells us that the largest component remaining
has size $\Op(\eps^{-2}\log(\eps^3 n))$, as required.
\end{proof}

\begin{remark}
Considering a random walk with independent increments with
distribution $\eta-2$, one would expect that the probability that our
random walk $(X_t)$ `takes off' without hitting $-2$ near the start is
roughly the expected degree of the initial vertex times $2\eps/v_0$.
(To see this, simply solve the recurrence relation for the
probability of hitting $-2$ as a function of the initial value.) The expected degree of the initial
vertex (which is chosen with probability proportional to degree) is
roughly $2$, giving a `take-off' probability of roughly $4\eps/v_0\sim
4\mu_1/\mu_3$; it is not hard to check that this
is asymptotically correct in the actual process $(X_t)$.

One should expect this probability to be simply related to (or at first
sight equal to) $\rho$; here the difference is that $\rho$ is the probability
that a \emph{uniformly} chosen random vertex is in the giant component.
It is not hard to check that the giant component is `tree-like', and
in particular has average degree $2+o(1)$, so the probability
that a vertex chosen with probability proportional to degree
is in the giant component is around $2\rho/\mu_1$.
From \eqref{rhosim} this is asymptotically $4\eps\mu_1/\mu_3$,
as it should be.

This comment illustrates a strange feature of the trajectory tracking arguments
here and in the papers of Nachmias and Peres~\cite{NP_regular} and Bollob\'as and Riordan~\cite{BR_walk}:
there is a related viewpoint using branching processes which more easily gives the approximate
size of the giant component, essentially by considering the probability that a vertex
is in a large component, i.e., that the random trajectory `takes off'. One can prove
quite accurate bounds by this method without worrying about when the trajectory will
hit zero eventually; see Bollobas and Riordan~\cite{rg_bp}. However,
for the distributional result, it seems easier to follow the whole trajectory.
\end{remark}

\section{A local limit theorem}\label{sec_local}

As a step towards the proof of Theorem~\ref{th-1} we shall need a
local limit theorem (Lemma~\ref{llt} below)
that may perhaps be known, but that we have
not managed to find in the literature. This concerns a sequence $(S_n)$
of sums of independent random variables. As usual in the combinatorial
setting, each $S_n$ involves different variables; we cannot
make the more usual assumption in probability theory that each $S_n$ is the sum
of the first $n$ terms of a single sequence. This makes little difference
to the proofs, however.

We start from Esseen's inequality
in the following form, also known as the Berry--Esseen Theorem;
see, for example, Petrov~\cite[Ch. V, Theorem 3]{Petrov}.
We write $\phi(x)$ and $\Phi(x)$ for the density and distribution
functions of the standard normal random variable.
\begin{theorem}\label{BE}
Let $Z_1,\ldots,Z_t$ be independent random variables such that $\rho=\sum_{i=1}^t \E(|X_i|^3)<\infty$,
and let $S=\sum_{i=1}^t Z_i$.
Then
\[
 \sup_x \bigabs{ \Pr( S\le x ) - \Phi((x-\mu)/\sigma) } \le A\rho/\sigma^3,
\]
where $\mu$ and $\sigma^2$ are the mean and variance of $S$,
and $A$ is an absolute constant.\noproof
\end{theorem}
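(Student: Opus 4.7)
The statement is the classical Berry--Esseen (Esseen) theorem for independent (not necessarily identically distributed) summands, so rather than inventing a new argument I would follow the standard Fourier-analytic route, which keeps track of all constants explicitly. The plan is to normalise, apply a smoothing inequality that converts the Kolmogorov distance into an integral of characteristic functions, and then bound the latter using the third moment hypothesis.

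First I would reduce to the case $\mu=0$, $\sigma=1$ by replacing each $Z_i$ with $(Z_i-\E Z_i)/\sigma$, noting that the quantity $\rho/\sigma^3$ is invariant under this standardisation. Let $f(u)=\E e^{iuS}=\prod_{i=1}^t f_i(u)$ where $f_i(u)=\E e^{iuZ_i}$, and let $g(u)=e^{-u^2/2}$ be the characteristic function of the standard normal. The key tool is Esseen's smoothing lemma: for every $T>0$,
\[
\sup_x|F(x)-\Phi(x)|\le \frac{1}{\pi}\int_{-T}^{T}\frac{|f(u)-g(u)|}{|u|}\,\dd u+\frac{c_1}{T},
\]
where $c_1$ is absolute. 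This reduces the problem to estimating $|f(u)-g(u)|$ in an appropriate range of $u$.

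The second step is to bound the individual factors. Taylor expansion with remainder in integral form gives, for $Z_i$ with mean $0$ and variance $\sigma_i^2$,
\[
\bigabs{f_i(u)-(1-\tfrac12\sigma_i^2 u^2)}\le \tfrac16 |u|^3\E|Z_i|^3,
\]
and likewise $|e^{-\sigma_i^2u^2/2}-(1-\tfrac12\sigma_i^2u^2)|\le \tfrac18\sigma_i^4 u^4$, with $\sigma_i^4\le (\E|Z_i|^3)^{4/3}\le \E|Z_i|^3$ when $|u|$ is moderate. Combining these via the telescoping identity
\[
f(u)-g(u)=\sum_{i=1}^{t}\left(\prod_{j<i}f_j(u)\right)\bigl(f_i(u)-e^{-\sigma_i^2u^2/2}\bigr)\left(\prod_{j>i}e^{-\sigma_j^2u^2/2}\right),
\]
and using $|f_j(u)|\le e^{-\sigma_j^2u^2/4}$ in the regime $|u|\le c/\rho^{1/3}$ (which follows from a second-order expansion plus the inequality $1-x\le e^{-x}$), yields a bound of the shape
\[
|f(u)-g(u)|\le c_2|u|^3\rho\,e^{-u^2/4}
\]
throughout $|u|\le c_3/\rho^{1/3}$ (using $\sum_i\sigma_i^2=1$ and $\rho\ge 1$ when $\rho$ is large; the small-$\rho$ case is trivial since the bound becomes vacuous).

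Finally I would substitute into Esseen's inequality with the choice $T=c_4/\rho$, integrate $|u|^2 e^{-u^2/4}$ over the real line to obtain an $O(\rho)$ contribution from the characteristic-function integral, and a $c_1/T=O(\rho)$ contribution from the tail. Together these give $\sup_x|F(x)-\Phi(x)|\le A\rho$ with $A$ absolute, which is the claim. The main obstacle is the uniform bound $|f_j(u)|\le e^{-\sigma_j^2 u^2/4}$ outside the immediate neighbourhood of $0$: it needs a careful argument because the individual $Z_i$ may be heavy-tailed in the sense that $\sigma_i^3$ is not controlled by $\E|Z_i|^3$ without loss; the standard remedy is to cut the range of $u$ at a threshold proportional to $\rho^{-1/3}$, beyond which the $1/T$ term in Esseen's lemma already dominates. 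Since this is Petrov's Chapter V, Theorem 3, I would simply cite it rather than reproduce the bookkeeping.
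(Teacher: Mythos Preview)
The paper does not prove this theorem at all: it is stated with the \verb|\noproof| marker and attributed to Petrov~\cite[Ch.~V, Theorem~3]{Petrov}, exactly as you conclude in your final sentence. Your sketch of the standard Fourier-analytic argument is broadly right, but note one slip: it is the \emph{large}-$\rho$ case (say $\rho\ge 1/A$) that is trivial because the bound $A\rho\ge 1$ becomes vacuous, not the small-$\rho$ case.
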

Given an integer-valued random variable $Z$, let $b_r(Z)$ denote the $r$th \emph{Bernoulli part} of $Z$,
defined by
\begin{equation}\label{bp}
 b_r(Z) = 2\sup_i \min\{\Pr(Z=i),\Pr(Z=i+r)\}.
\end{equation}
It is easy to check that for any $p\le b_1(Z)$ we can write $Z$
in the form 
\begin{equation}\label{bdecomp}
 Z=Z'+IB,
\end{equation}
where $I\sim \Bern(p)$,
$B\sim\Bern(1/2)$, and $B$ is independent of the pair $(Z',I)$.
Here $\Bern(p)$ denotes the Bernoulli distribution assigning probability $p$
to the value $1$ and probability $1-p$ to the value $0$.
Similarly, for $p\le b_r(Z)$ we can write $Z$ in the form $Z'+rIB$ with the same
assumptions on $Z'$, $I$ and $B$.

We should like a `local limit theorem' giving, under mild conditions,
an asymptotic formula for $\Pr(S=\floor{\mu})$, say,
where $S$ is a sum of independent random variables and $\mu=\E S$. As is well
known (see, e.g., \cite[Ch. VII]{Petrov}), when the summands
take integer values in a finite range, the only obstruction
is their taking values in a non-trivial arithmetic progression,
in which case the sum \emph{cannot} take certain values.
Results similar to the next lemma are stated in~\cite{Petrov},
but the conditions are different in important ways.

\begin{lemma}\label{lllt}
Let $k\ge 1$ be fixed. Suppose that for each $n$ we have a sequence $(Z_{ni})_{i=1}^{t(n)}$
of independent random variables taking values in $\{-k,-k+1,\ldots,k\}$.
Let $S_n=\sum_{i=1}^{t(n)} Z_{ni}$, and let $\mu_n$ and $\sigma_n^2$ denote the mean
and variance of $S_n$.
Suppose that $\sigma_n^2=\Theta(t(n))$, that $t(n)\to\infty$, and that
\[
 b_{1,n} = \sum_{i=1}^{t(n)} b_1(Z_{ni}) \to\infty.
\]
Then for any sequence $(x_n)$ of integers satisfying $x_n=\mu_n+O(\sigma_n)$ we have
\begin{equation}\label{p0}
 \Pr(S_n=x_n) \sim p_0(n) = \frac{1}{\sigma_n\sqrt{2\pi}} \exp\left(-\frac{(x_n-\mu_n)^2}{2\sigma_n^2}\right).
\end{equation}
\end{lemma}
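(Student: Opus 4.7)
I would prove this by a Bernoulli-part decomposition combined with a Berry--Esseen / convolution argument. For each $i$, write $Z_{ni} = Z'_{ni} + I_{ni} B_{ni}$, where $I_{ni} \sim \Bern(b_1(Z_{ni}))$, $B_{ni} \sim \Bern(1/2)$ is independent of $(Z'_{ni}, I_{ni})$, and the triples $(Z'_{ni}, I_{ni}, B_{ni})$ are independent across $i$; when $b_1(Z_{ni}) = 0$, set $Z'_{ni} = Z_{ni}$ and $I_{ni} = 0$. Set $N_n = \sum_i I_{ni}$, $T_n = \sum_i I_{ni} B_{ni}$, and
\[
 V_n = \sum_i \bigl(Z'_{ni} + \tfrac12 I_{ni}\bigr),
\]
so that $V_n$ is a sum of $t(n)$ independent bounded random variables with $\E V_n = \mu_n$ and $\Var V_n = \sigma_V^2 := \sigma_n^2 - b_{1,n}/4$, while $S_n = V_n + (T_n - N_n/2)$. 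Conditional on everything but the $B_{ni}$, $T_n - N_n/2$ has the distribution of a centred $\mathrm{Bin}(N_n, 1/2)$ and is independent of $V_n$, giving
\[
 \Pr(S_n = x_n) = \E\biggl[\binom{N_n}{x_n - V_n + N_n/2}\, 2^{-N_n}\biggr].
\]

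The first step is concentration: since $\E N_n = b_{1,n} \to \infty$ and $\Var N_n \le b_{1,n}$, a Chernoff estimate gives $|N_n - b_{1,n}| = \Op(\sqrt{b_{1,n}})$, and the atypical event contributes negligibly. On the typical event, Stirling's formula yields the local de Moivre--Laplace approximation
\[
 \binom{N_n}{j}\, 2^{-N_n} = (1+o(1))\, K_\tau(j - N_n/2),
\]
where $K_\tau$ is the centred Gaussian density with variance $\tau^2 := N_n/4$, valid uniformly in any window of width $o(N_n^{2/3})$ around $N_n/2$; Hoeffding's inequality controls the Gaussian-tail remainder.

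It remains to estimate $\E[K_\tau(x_n - V_n)]$. Applying Theorem~\ref{BE} to $V_n$, whose bounded summands satisfy $\sum_i \E\bigl|Z'_{ni} + \tfrac12 I_{ni}\bigr|^3 = O(t(n)) = O(\sigma_n^2)$, yields $\sup_v |F_{V_n}(v) - \Phi((v-\mu_n)/\sigma_V)| = O(\sigma_n^2/\sigma_V^3)$. Integration by parts against the smooth kernel $K_\tau$, using $\|K_\tau'\|_1 = O(1/\tau)$, shows that replacing $F_{V_n}$ by the normal CDF costs at most $O(\sigma_n^2/(\tau \sigma_V^3))$. With the Gaussian CDF in place the convolution integral evaluates exactly to the Gaussian density at $x_n - \mu_n$ of variance $\tau^2 + \sigma_V^2 = \sigma_n^2$, which is precisely $p_0(n)$.

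The main obstacle will be making the error $O(\sigma_n^2/(\tau\sigma_V^3))$ uniformly $o(1/\sigma_n)$ across all regimes. If $b_{1,n} = o(\sigma_n^2)$ then $\sigma_V \sim \sigma_n$, so the ratio is $O(1/(\sigma_n \sqrt{b_{1,n}})) = o(1/\sigma_n)$ using $b_{1,n}\to\infty$; if $b_{1,n}/\sigma_n^2$ is bounded away from both $0$ and its maximum, both $\tau$ and $\sigma_V$ are of order $\sigma_n$ and the ratio is $O(1/\sigma_n^2)$. The genuinely delicate case is when $\sigma_V \to 0$ (so $b_{1,n}/(4\sigma_n^2) \to 1$), where Berry--Esseen degrades; here one instead uses that $V_n$ concentrates at $\mu_n$ (Chebyshev gives $|V_n - \mu_n| = \Op(\sigma_V) = \op(1)$) and that $K_\tau$ with $\tau \sim \sigma_n$ is slowly varying, so a direct Taylor expansion of $K_\tau$ around $\mu_n$ recovers the result, essentially reducing to the local de Moivre--Laplace theorem applied to $T_n$ alone.
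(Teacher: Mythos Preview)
Your approach shares the paper's core idea---extract a Bernoulli part for smoothing, apply Berry--Esseen to the remainder---but differs in one technical choice with real consequences. You extract the \emph{full} Bernoulli part $p_i=b_1(Z_{ni})$, which forces the case analysis on $\sigma_V$ and the integration-by-parts convolution argument. The paper instead picks $p_i\le b_1(Z_{ni})$ with $\sum_i p_i=\omega^6$ for a slowly growing $\omega$ (so $\omega^6\le b_{1,n}$ and $\omega^{24}\le t$). This calibration buys a lot: the remainder $S^0=\sum Z_i'$ has conditional variance $\sigma_n^2+O(\omega^6)=\Theta(t)$ given \emph{any} realisation of $\bI$ with $N\le t^{1/3}$, so Berry--Esseen applies conditionally with uniform error $O(t^{-1/2})$, and no $\sigma_V$ case-split is needed. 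The paper then works with interval probabilities $\Pr(S^0\in I\mid\bI)$ for intervals of length $\omega$---where the Berry--Esseen error is beaten by the signal $\omega p_0$---rather than invoking the Gaussian approximation to the binomial at all.

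One step in your outline needs more than you indicate. You say the atypical event for $N_n$ ``contributes negligibly'', but the contribution must be $o(p_0)=o(1/\sigma_n)$, not merely $o(1)$. The crude Chernoff bound $\Pr(|N_n-b_{1,n}|>b_{1,n}/2)\le e^{-cb_{1,n}}$ only yields this when $b_{1,n}\gg\log t$; under the bare hypothesis $b_{1,n}\to\infty$ (say $b_{1,n}=\log\log t$) it does not. The paper handles its analogous small-$N$ case via the uniform conditional bound $\sup_x\Pr(S^0=x\mid\bI)=O(t^{-1/2})$, which is precisely what the choice $\sum p_i=\omega^6=o(t)$ makes available. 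In your framework you would need a similar conditional anticoncentration bound on $V_n$ given $\bI$, but with the full Bernoulli part extracted this is not automatic: for some $\bI$ the conditional variance of $\sum Z_i'$ can vanish (e.g.\ when each $Z_i$ is uniform on $\{-1,0,1\}$, the residual $Z_i'$ given $I_i=0$ is deterministic). This is fixable---cap each $p_i$ strictly below $b_1(Z_{ni})$, or simply adopt the paper's partial extraction for this one step---but it is a genuine extra argument that the proposal does not supply.
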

\begin{proof}
The condition $|Z_{ni}|\le k$ ensures that $\E(|Z_{ni}|^3)<k^3=O(1)$, while by assumption
$\sigma_n^2=\Theta(t(n))$, so
Theorem~\ref{BE} gives
\[
 \sup_x \bigabs{ \Pr(S_n\le x) -\Phi((x-\mu_n)/\sigma_n) } =O(t(n)^{-1/2}).
\]
We shall not use exactly this bound, instead applying Theorem~\ref{BE}
to a slightly different sum of independent variables.

Let $\omega=\omega(n)$ be an integer chosen
so that $\omega\to\infty$, but $\omega^6\le b_{1,n}$ and $\omega^{24}\le t(n)$, say.
Choose $p_i=p_{ni}$ so that $0\le p_i\le b_1(Z_{ni})$ and $\sum p_i=\omega^6$.

Suppressing the dependence on $n$ in the notation, let us write $Z_i=Z_{ni}$
in the form $Z_i=Z_i'+I_iB_i$ as in \eqref{bdecomp}, where $I_i\sim\Bern(p_i)$,
$B_i\sim \Bern(1/2)$, $B_i$ is independent of $(Z_i',I_i)$,
and variables associated to different $i$ are independent.
Let $\bI=(I_1,\ldots,I_t)$, and let $N=|\bI|=\sum I_i$.

The idea is simply to condition on $\bI$, and thus on $N$.
Let $S^0=S_n^0=\sum Z_i'$, and $S^1=S_n^1=\sum_{i:I_i=1} B_i$, so $S_n=S^0+S^1$. Then,
given $\bI$, $S^0$ and $S^1$ are independent. Furthermore,
the conditional distribution of $S^1$ is binomial $\Bi(N,1/2)$.

In the following argument we shall consider values of $N$ in three
separate ranges: $N\ge t^{1/3}$, $N\le \omega^6/2$, and the `typical'
range $\omega^6/2\le N\le t^{1/3}$.

Since $N$ is a sum of independent indicators with $\E N=\omega^6\le t^{1/4}$,
standard results (e.g., the Chernoff bounds) imply
that $\Pr(N\ge t^{1/3})$ is exponentially small in $t^{1/3}$,
and hence, extremely crudely, that $\Pr(N\ge t^{1/3})=o(t^{-1/2})$.
Thus
\begin{equation}\label{r1}
 \Pr\bb{S_n=x_n \wedge N\ge t^{1/3}} = o(t^{-1/2}).
\end{equation}

Note that $\E S^0=\mu_n-\E N/2=\mu_n-\omega^6/2=\mu_n+o(\sqrt{t})$.
Similarly, $\Var S^0= \Var S_n+o(\sqrt{t})=\sigma_n^2+o(\sqrt{t})$.
Let $\tilde\mu$ and $\tilde\sigma^2$ denote the conditional mean and variance
of $S^0$ given $\bI$. Given $\bI$, the summands $Z_i'$ in $S^0$
are independent, but their individual distributions depend on the $I_i$.
Changing one $I_i$ only affects the distribution of one
summand, and all $Z_i'$ are bounded by $\pm k$,
so we see that $\tilde\mu$ and $\tilde\sigma^2$ change by $O(1)$
if one entry of $\bI$ is changed.
It follows easily that $\tilde\mu=\E S^0+O(N+\E N)$ and $\tilde\sigma^2=\Var S^0+O(N+\E N)$.
Hence, when $N\le t^{1/3}$ we have
\begin{equation}\label{tms}
 \tilde\mu = \mu_n+o(t^{1/2}) \text{\ \ and \ \ }\tilde\sigma \sim \sigma_n.
\end{equation}
Given $\bI$, $S^0$ is a sum of $t$ independent random variables
whose third moments are all bounded by $k^3$.
By Theorem~\ref{BE} it follows that when $N\le t^{1/3}$ then
\begin{equation}\label{S0}
 \Pr(S^0< x\mid \bI) =\Phi((x-\tilde\mu)/\tilde\sigma) +O(t^{-1/2})
\end{equation}
uniformly in $x$. Considering consecutive (integer) values of $x$, it follows that
\begin{equation}\label{max}
 \sup_x\Pr(S^0=x\mid \bI)=O(t^{-1/2})
\end{equation}
whenever $N=|\bI|\le t^{1/3}$.

To handle the case $N\le \omega^6/2$, recall that after conditioning
on $\bI$, the sums $S^0$ and $S^1$ are independent. Thus
\begin{eqnarray*}
 \Pr(S_n=x_n \mid N\le \omega^6/2)
 &\le& \sup_{\bI_0: |\bI_0|\le \omega^6/2}\sup_m \Pr(S^0=x_n-m\mid \bI=\bI_0, S^1=m) \\
 &=&O(t^{-1/2}),
\end{eqnarray*}
using \eqref{max} for the final bound.
Since $\Pr(N\le \omega^6/2)=o(1)$,  this gives
\begin{equation}\label{r2}
 \Pr\bb{S_n=x_n \wedge N\le \omega^6/2} =o(t^{-1/2}).
\end{equation}

Finally, consider the `typical' case, where $\omega^6/2\le N\le t^{1/3}$.
Condition on $\bI$, assuming that $N$ is in this range.
Let $I$ be an `interval' consisting of $\omega$ consecutive integers. By \eqref{S0} we have
\[
 \Pr(S^0\in I\mid \bI) = \Phi(y+\omega/\tilde\sigma)-\Phi(y)+O(t^{-1/2}),
\]
where $y=(\min I-\tilde\mu)/\tilde\sigma$.
If the endpoints of $I$ are within $o(\sqrt{t})$ of $x_n=\mu_n+O(\sigma_n)$ then using
\eqref{tms} we have $y\sim(x_n-\mu_n)/\sigma_n$, and it follows easily that
\begin{equation}\label{I}
 \Pr(S^0\in I\mid \bI) = \omega p_0+O(t^{-1/2}) \sim \omega p_0,
\end{equation}
where $p_0$ is as in \eqref{p0}, and we used $p_0=\Theta(t^{-1/2})$
and $\omega\to\infty$ in the final approximation.

For $a=0,1,2,\ldots,$ let $J_a$ be the interval $[a\omega,(a+1)\omega-1]$,
and let $I_a=x_n-J_a$. Recalling that $S^0$ and $S^1$ are conditionally independent, 
we have
\[
 \Pr(S_n=x_n\mid \bI) \ge \sum_{a=0}^{N/\omega+1} \Pr(S^0\in I_a \mid\bI) \min_{j\in J_a}\Pr(S^1=j\mid \bI),
\]
and a corresponding upper bound with $\min$ replaced by $\max$.
Since $N\le t^{1/3}=o(\sqrt{t})$, from \eqref{I}
we have $\Pr(S^0\in I_a\mid\bI)\sim \omega p_0$ for all $a\le N/\omega+1$, so we obtain
\[
 \Pr(S_n=x_n\mid \bI) \ge (1+o(1)) \omega p_0 \sum_a \min_{j\in J_a}\Pr(S^1=j\mid \bI),
\]
and a corresponding upper bound with $\min$ replaced by $\max$.
Recall that, conditional on $\bI$, the distribution of $S^1$ is simply binomial $\Bi(N,1/2)$.
Since the standard deviation $\sqrt{N}/2$ of $S^1$ is much larger than $\omega$,
elementary properties of the binomial distribution imply that
\[
 \sum_a \min_{j\in J_a}\Pr(S^1=j\mid \bI) \sim  \sum_a \max_{j\in J_a}\Pr(S^1=j\mid \bI) \sim 1/\omega.
\]
(The bulk of each sum comes near the middle of the binomial distribution, where
the point probabilities hardly change within one interval; all that
is actually needed here is $\omega=o(\sqrt{N})$.)
This gives us an estimate for $\Pr(S_n=x_n\mid \bI)$ valid whenever $\omega^6/2\le N\le t^{1/3}$,
and it follows that
\begin{equation}\label{r3}
 \Pr\bb{S_n=x_n \wedge \omega^6/2< N <t^{1/3}} \sim p_0 \Pr(\omega^6/2< N <t^{1/3})\sim p_0.
\end{equation}
Combining \eqref{r1}, \eqref{r2} and \eqref{r3} gives the result.
\end{proof}

Results somewhat similar to Lemma~\ref{lllt} are certainly known; see, 
for example, McDonald~\cite{McDonald}, where Bernoulli parts are used
to deduce a local limit theorem from a central limit theorem. However,
the assumptions are different, and the Bernoulli part needed is much larger.

Note that uniform boundedness is not really needed in Lemma~\ref{lllt}; a suitable condition
on the third moments should suffice. Also, we may replace the condition
$\sum_i b_1(Z_{ni})\to \infty$ by $\sum_i b_r(Z_{ni})\to\infty$
for all $r\in R$, where $R$ is any set of integers with highest common factor $1$.
This latter condition is `almost' necessary (after passing to a subsequence): without it there is some
$d$ (the highest common factor of the integers in $R$) such that even distribution
modulo $d$ will only happen because of a `coincidence'; see the discussion
in \cite[Ch. VII]{Petrov}.

We shall need a result along the lines of Theorem~\ref{lllt} but away
from the central part of the distribution. This follows easily using
a trick called `exponential tilting', introduced by Cram\'er~\cite{Cramer},
and suggested to us by Paul Balister. Let $Z$ be a random variable, here
with finite support, such that $\Pr(Z>0)$ and $\Pr(Z<0)$ are both positive.
Consider the function
\[ 
 f(\alpha) = \E(Z e^{\alpha Z}) = \sum_x p_x x e^{\alpha x},
\]
where $p_x=\Pr(Z=x)$.
Note that
\[
 f'(\alpha) = \E(Z^2e^{\alpha Z}) >0.
\]
Also, if the support of $Z$ is contained in $[-k,k]$,
then $|f''(\alpha)| = |\E(Z^3e^{\alpha Z})| \le k^3e^{|\alpha| k}$.

Since $f(\alpha)$ is increasing and tends to $\pm\infty$ as $\alpha\to\pm\infty$,
there is a unique $a=a(Z)$ such that $f(a)=0$.
Define
\begin{equation}\label{cdef}
 c=c(Z)=\E(e^{aZ})=\sum_x p_x e^{ax},
\end{equation}
and let $Z'$ be the random variable with
\[
 \Pr(Z'=x) = \Pr(Z=x) e^{ax}/c,
\]
noting that these probabilities sum to 1 by the definition of $c$,
and that $\E(Z')=0$ by the definition of $a$. It is easy
to check that if $S_t$ denotes the sum of $t$ independent copies of $Z$
and $S_t'$ the sum of $t$ independent copies of $Z'$, then
\begin{equation}\label{tilt}
 \Pr(S_t'=x) = \Pr(S_t=x) e^{ax}/c^t.
\end{equation}

Recall that $b_1(Z)$ is the Bernoulli part of $Z$, defined by \eqref{bp}.

\begin{lemma}\label{llt}
Let $F$ be a finite set of integers, and let $Z_n$, $n\ge 1$, be a sequence
of probability distributions supported on $F$, with $\liminf\Pr(Z_n<0)>0$
and $\liminf\Pr(Z_n>0)>0$. Suppose that $t=t(n)\to\infty$, and that
$tb_1(Z_n)\to\infty$.
Let $S_n$ denote the sum of $t$ independent copies of $Z_n$,
and define $a_n=a(Z_n)$, $c_n=c(Z_n)$ and $Z_n'$ as above. Then
\[
 \Pr(S_n=x) \sim\frac{1}{\tilde\sigma_n\sqrt{2\pi t}} e^{-a_nx} c_n^t
\]
uniformly in integer $x=o(\sqrt{t})$, where $\tilde\sigma_n^2$ is the variance of $Z_n'$.
\end{lemma}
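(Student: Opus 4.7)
The plan is to reduce Lemma~\ref{llt} to Lemma~\ref{lllt} by means of the exponential tilting identity \eqref{tilt}. Rearranging \eqref{tilt} gives
\[
 \Pr(S_n=x) = c_n^t e^{-a_n x}\Pr(S_n'=x),
\]
where $S_n'$ is the sum of $t$ independent copies of the tilted variable $Z_n'$. Because $\E(Z_n')=0$, $S_n'$ is centred, and the claim reduces to showing that $\Pr(S_n'=x)\sim(\tilde\sigma_n\sqrt{2\pi t})^{-1}$ uniformly in integer $x=o(\sqrt t)$, which is exactly what Lemma~\ref{lllt} will give once its hypotheses are checked.

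First I would collect uniform bounds on the tilt. Since $F$ is a fixed finite set and $\liminf\Pr(Z_n<0)>0$, $\liminf\Pr(Z_n>0)>0$, the function $\alpha\mapsto\E(Z_ne^{\alpha Z_n})$ is strictly increasing with derivatives uniformly controlled in $n$; its unique root $a_n$ therefore lies in a fixed compact interval, and consequently $c_n=\E(e^{a_nZ_n})=\Theta(1)$. The tilted distribution $Z_n'$ still has mass bounded below at some negative and some positive point of $F$, so $\tilde\sigma_n^2=\Var(Z_n')=\Theta(1)$. In particular $\Var(S_n')=t\tilde\sigma_n^2=\Theta(t)$. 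Moreover, from
\[
 \Pr(Z_n'=i)=\Pr(Z_n=i)e^{a_ni}/c_n,
\]
and the uniform bounds on $a_n$ and $c_n$, the Bernoulli parts satisfy $b_1(Z_n')=\Theta(b_1(Z_n))$, so the sum $\sum_{i=1}^t b_1(Z_n')=tb_1(Z_n')$ tends to infinity by hypothesis.

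With these bounds in hand, Lemma~\ref{lllt} applies to $S_n'$ (with constant $k=\max\{|m|:m\in F\}$) and, since $x=o(\sqrt t)$ lies well within the $O(\tilde\sigma_n\sqrt t)$ window, it gives
\[
 \Pr(S_n'=x)\sim\frac{1}{\tilde\sigma_n\sqrt{2\pi t}}\exp\left(-\frac{x^2}{2\tilde\sigma_n^2 t}\right).
\]
For $x=o(\sqrt t)$ the exponential factor is $1+o(1)$, so $\Pr(S_n'=x)\sim(\tilde\sigma_n\sqrt{2\pi t})^{-1}$. Substituting this into the tilting identity yields the stated asymptotic formula, and the uniformity in $x$ inside the range $x=o(\sqrt t)$ follows from the uniformity already established in Lemma~\ref{lllt}.

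The only point that requires any care is verifying these uniformities in $n$: one must check that $a_n$ does not escape to infinity, that $c_n$ and $\tilde\sigma_n$ do not degenerate, and that the Bernoulli part of the tilted variable is comparable to that of the original. All of these follow routinely from the finiteness of $F$ together with the two $\liminf$ conditions, so the bulk of the argument is the packaging described above rather than any genuine estimation.
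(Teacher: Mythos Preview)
Your proposal is correct and follows essentially the same route as the paper: reduce via the tilting identity \eqref{tilt} to showing $\Pr(S_n'=x)\sim(\tilde\sigma_n\sqrt{2\pi t})^{-1}$, check that the tilts $a_n$ stay bounded (the paper does this by passing to a subsequence so that $Z_n$ converges in distribution, you do it directly from the $\liminf$ conditions, which is equally valid), deduce that $b_1(Z_n')=\Theta(b_1(Z_n))$ and $\Var(Z_n')=\Theta(1)$, and then invoke Lemma~\ref{lllt}. The only cosmetic difference is that you spell out the exponential factor $\exp(-x^2/(2\tilde\sigma_n^2 t))=1+o(1)$ explicitly, which the paper leaves implicit.
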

\begin{proof}
In the light of \eqref{tilt}, it suffices to prove that
$\Pr(S_n'=x)\sim 1/(\tilde\sigma_n\sqrt{2\pi t})$, where $S_n'$ is the sum of $t$
independent copies of $Z_n'$.

Passing to a subsequence, we may suppose that $\Pr(Z_n=i)$ converges for each $i$,
and that there are $i<0$ and $j>0$ for which the limit is strictly positive.
It follows that the `tilting amounts' $a=a(Z_n)$ are bounded. Hence
$b_1(Z_n')=\Theta(b_1(Z_n))$, so $tb_1(Z_n')\to\infty$.
Also, $\Var(Z_n')$ is bounded below by some positive number.
Lemma~\ref{lllt} thus applies to the sum of $t$ independent copies of $Z_n'$,
giving the result.
\end{proof}

We finish this section by noting some basic properties of tilting applied to random variables whose
mean is close to zero.
\begin{lemma}\label{smalltilt}
Let $k$ be fixed. If $Z_n$ is a sequence of distributions on $\{-k,\ldots,k\}$
with $\eps_n=\E Z_n\to 0$ and $\Var(Z_n)=\sigma_n^2=\Theta(1)$,
then the quantities $a_n=a(Z_n)$ and $c_n=c(Z_n)$ defined above satisfy
\begin{equation}\label{an}
 a_n \sim -\eps_n/\sigma_n^2
\end{equation}
and
\begin{equation}\label{cn}
 1-c_n \sim  \eps_n^2/(2\sigma_n^2).
\end{equation}
Furthermore, $\Var(Z_n')\sim \sigma_n^2$, and if $W_n$ is supported
on $\{-k,\ldots,k\}$ and may be coupled to agree with $Z_n$
with probability $1-p_n$ where $p_n=o(\eps_n)$, then
\begin{equation}\label{vary}
 |a(W_n)-a_n| = O(p_n)\text{\ \ and\ \ }|c(W_n)-c_n|=O(\eps_np_n).
\end{equation}
\end{lemma}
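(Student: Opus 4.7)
The plan is to proceed by Taylor expansion in each part, using the boundedness of the support of $Z_n$ to make all derivatives of the relevant exponential moment generating functions bounded uniformly.

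For \eqref{an}, recall that $a_n$ is defined by $f_n(a_n)=0$ where $f_n(\alpha)=\E(Z_n e^{\alpha Z_n})$, and that $f_n(0)=\eps_n\to 0$, $f_n'(0)=\E(Z_n^2)=\sigma_n^2+\eps_n^2\sim \sigma_n^2=\Theta(1)$, while $|f_n''(\alpha)|\le k^3 e^{|\alpha|k}$ is uniformly bounded for $\alpha$ in a fixed bounded interval. A standard application of the implicit function theorem (or a one-step Newton argument) thus gives $a_n\to 0$ and $a_n=-\eps_n/\sigma_n^2+O(\eps_n^2)$, which yields \eqref{an}.

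For \eqref{cn}, I will Taylor expand $c_n=\E(e^{a_n Z_n})=1+a_n\E Z_n+\tfrac{a_n^2}{2}\E Z_n^2+O(a_n^3)$; plugging in $\E Z_n=\eps_n$, $\E Z_n^2\sim \sigma_n^2$, and $a_n\sim -\eps_n/\sigma_n^2$ yields $1-c_n\sim -a_n\eps_n-\tfrac{a_n^2}{2}\sigma_n^2 \sim \eps_n^2/\sigma_n^2-\eps_n^2/(2\sigma_n^2) = \eps_n^2/(2\sigma_n^2)$. For the claim on $\Var(Z_n')$, note that $\E(Z_n'^r)=\E(Z_n^r e^{a_n Z_n})/c_n$; since $a_n\to 0$ and the support is bounded, $e^{a_n Z_n}=1+o(1)$ uniformly, so $\E Z_n'=\eps_n+o(\eps_n)+o(1)\cdot O(1)$ and $\E(Z_n'^2)\sim \sigma_n^2$, whence $\Var(Z_n')\sim \sigma_n^2$.

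Finally, for \eqref{vary}, let $g_n(\alpha)=\E(W_n e^{\alpha W_n})$ and $\tilde c_n(\alpha)=\E(e^{\alpha W_n})$. Since $W_n$ and $Z_n$ can be coupled to differ only on an event of probability $p_n$, and both are bounded, $|g_n(\alpha)-f_n(\alpha)|=O(p_n)$ uniformly on a fixed bounded interval of $\alpha$. Because $f_n$ is monotone with $f_n'\ge \sigma_n^2/2=\Theta(1)$ near $0$, the defining equation $g_n(a(W_n))=0=f_n(a_n)$ yields $|a(W_n)-a_n|=O(p_n/\sigma_n^2)=O(p_n)$. For the bound on $|c(W_n)-c_n|$, I split as
\[
 c(W_n)-c_n = \bb{\E e^{a(W_n)W_n}-\E e^{a(W_n)Z_n}} + \bb{\E e^{a(W_n)Z_n}-\E e^{a_n Z_n}}.
\]
The first term is $O(p_n|a(W_n)|)=O(\eps_n p_n)$ by the coupling, since $a(W_n)=O(\eps_n)$. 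For the second term, I will use that $\frac{d}{d\alpha}\E e^{\alpha Z_n}\big|_{\alpha=a_n}=f_n(a_n)=0$ (this is precisely the variational characterization that $a_n$ minimizes the exponential moment), so by Taylor expansion the second term is $O((a(W_n)-a_n)^2)=O(p_n^2)=o(\eps_n p_n)$, giving the claimed bound.

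The main (minor) obstacle is the second half of \eqref{vary}: naively bounding $|c(W_n)-c_n|$ by the triangle inequality gives only $O(p_n)$, since $c$ itself is of order $1$; the improvement to $O(\eps_n p_n)$ crucially relies on the vanishing derivative of the moment generating function at the critical point $a_n$, which is the variational property that justifies the tilting construction in the first place.
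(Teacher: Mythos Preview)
Your argument is correct and matches the paper's approach for \eqref{an}, \eqref{cn}, and the claim $\Var(Z_n')\sim\sigma_n^2$: both proceed by Taylor expanding $f_n(\alpha)=\E(Z_ne^{\alpha Z_n})$ and $g_n(\alpha)=\E(e^{\alpha Z_n})$ about $\alpha=0$, using bounded support to control the remainders.

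For \eqref{vary} you actually supply more than the paper does. The paper merely remarks that the result follows from $|\E(Z_n^j e^{\alpha Z_n})-\E(W_n^j e^{\alpha W_n})|=O(p_n)$ uniformly in $j$ and bounded $\alpha$; as you correctly note, this alone only gives $|c(W_n)-c_n|=O(p_n)$. Your splitting of $c(W_n)-c_n$ into a coupling term of size $O(|a(W_n)|\,p_n)=O(\eps_np_n)$ and a Taylor term $\E e^{a(W_n)Z_n}-\E e^{a_nZ_n}=O((a(W_n)-a_n)^2)=O(p_n^2)$, exploiting that $\frac{d}{d\alpha}\E e^{\alpha Z_n}\big|_{\alpha=a_n}=f_n(a_n)=0$, is exactly the missing step. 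This variational observation --- that $a_n$ is the minimizer of the convex function $\alpha\mapsto\E e^{\alpha Z_n}$ --- is what upgrades the bound, and it is good that you isolated it explicitly.
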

\begin{proof}
Suppressing the dependence on $n$, let $f(\alpha)=\E(Ze^{\alpha Z})$ as above.
Then $f(0)=\E Z=\eps$, and $f'(0)=\E Z^2=\sigma^2+\eps^2\sim\sigma^2$.
Also, $f''(\alpha)$ is uniformly bounded for $\alpha\in [-1,1]$, say.
It follows easily that $a_n$, the solution to $f(\alpha)=0$, satisfies \eqref{an}.
Similarly, let $g(\alpha)=\E(e^{\alpha Z})$. Then $g(0)=1$, $g'(0)=\eps$,
$g''(0)=f'(0)\sim \sigma^2$ and $g'''(\alpha)$ is bounded for $\alpha=O(1)$.
It follows that $c_n=g(a_n)=1+a_n\eps+a_n^2\sigma^2/2+O(\eps^3)$, giving \eqref{cn}.

To see that $\Var(Z_n')\sim\sigma_n^2$ it is enough to note that $a_n\to 0$.
The final part may be proved using the fact that for each fixed $j$ and all
$\alpha \in[-1,1]$, we have
$|\E(Z_n^j e^{-\alpha Z_n}) - \E(W_n^j e^{-\alpha W_n})|=O(p_n)$.
\end{proof}

\section{The subcritical case}\label{sec_below}

In this section we prove Theorem~\ref{th-1}.
Although we do use the exploration process considered
in the rest of the paper, we do not track the deviations of this
process from its expectation;
instead we use stochastic domination arguments to `sandwich'
the process between two processes with independent increments.

We start with a lemma concerning the tail of the distribution
of the time that a certain random walk with independent
increments takes to first hit a given value.
In the application we shall essentially take $Z_n$ to be the distribution of $\eta-2$,
where $\eta$ is the degree of a vertex of our graph chosen with probability
proportional to its degree (see Section~\ref{sec_explore}).
In fact, we shall adjust the distribution slightly
both to allow us to use stochastic domination, and
to meet the condition $t b_1(Z_n)\to\infty$. In what follows we often suppress dependence
on $n$ in the notation. Recall that the Bernoulli part $b_1(Z)$ of a distribution
$Z$ is defined by \eqref{bp}.

\begin{lemma}\label{Ltr}
Let $k\ge 1$ be fixed, and let $Z_n$ be a sequence of probability distributions
on $\{-1,0,1,\ldots,k\}$ converging in distribution to some $Z$
with $\Var(Z)>0$, such that $\eps=\eps(n)=-\E Z_n>0$ and $\eps\to 0$.
Let $\bW_n=(W_t)_{t\ge 0}$ be a random walk with $W_0=0$ and the
increments independent with distribution $Z_n$, and let $\tau_r=\tau_r(n)=\inf\{t:W_t=-r\}$.
Suppose that $r\ge 1$ is fixed, and that $t=t(n)$ is such that
$\eps^2 t\to\infty$ and $tb_1(Z_n)\to\infty$. Then
\begin{equation}\label{tr}
 \Pr(\tau_r\ge t) \sim c_{r,Z}\delta^{-1}t^{-3/2}e^{-\delta t},
\end{equation}
where $\delta=\delta_n=-\log(c(Z_n))$ with $c(Z_n)$ defined as in
\eqref{cdef},
and $c_{r,Z}>0$ is some constant depending on $r$ and $Z$.
\end{lemma}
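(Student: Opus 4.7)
The plan is to combine Kemperman's cycle formula for left-continuous random walks with the local limit theorem of the previous section, then sum the resulting estimate against the exponential weight $c_n^s$.

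First I would exploit that the increments of $(W_t)$ lie in $\{-1,0,1,\ldots,k\}$, so the walk is downward left-continuous and can decrease by only one per step. Kemperman's cycle formula then gives the exact identity
\[
 \Pr(\tau_r = s) = \frac{r}{s}\,\Pr(W_s = -r)
\]
valid for every $s\ge 1$, reducing a first-passage probability to a one-point probability.

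Next I would apply Lemma~\ref{llt} directly to $W_s$ at $x=-r$. Since $r$ is fixed and $s\ge t\to\infty$ (because $\eps^2 t\to\infty$ and $\eps$ is bounded), the condition $x=o(\sqrt s)$ holds, and $sb_1(Z_n)\ge tb_1(Z_n)\to\infty$. Lemma~\ref{smalltilt} meanwhile ensures $a_n\sim \eps/\sigma^2>0$, $c_n=e^{-\delta_n}$ with $\delta_n\sim \eps^2/(2\sigma^2)\to 0$, and $\tilde\sigma_n\to\sigma$ where $\sigma^2=\Var(Z)$. Substituting the LLT asymptotic into the cycle formula yields
\[
 \Pr(\tau_r=s)\;\sim\;\frac{r\,e^{a_n r}}{\tilde\sigma_n\sqrt{2\pi}}\cdot\frac{c_n^s}{s^{3/2}},
\]
and summing over $s\ge t$ reduces the statement to the Laplace-type estimate
\[
 \sum_{s\ge t}\frac{e^{-\delta_n s}}{s^{3/2}}\;\sim\;\frac{e^{-\delta_n t}}{\delta_n\, t^{3/2}}.
\]
The latter holds because $\delta_n\to 0$ while $\delta_n t\to\infty$: essentially all the mass sits on an initial block of length $O(1/\delta_n)=o(t)$ on which $s^{-3/2}\sim t^{-3/2}$ is constant, and the exponential sums to $\sim \delta_n^{-1} e^{-\delta_n t}$. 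Letting $a_n r\to 0$ and $\tilde\sigma_n\to\sigma$ one reads off $c_{r,Z}=r/(\sigma\sqrt{2\pi})>0$.

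The main obstacle is that Lemma~\ref{llt} is a pointwise asymptotic at a single scale, whereas the summation asks for a statement uniform in $s\ge t$. I would handle this by splitting at $s_1=t+K/\delta_n$ for a large fixed $K$. On the bulk range $t\le s\le s_1$ all $s$ are of the same order as $t$, the LLT applies with uniform relative error, and the partial sum comes out as above. On the tail $s>s_1$ one uses instead the uniform upper bound $\Pr(W_s=-r)=O\bigl(c_n^s e^{a_n r}/\sqrt{s}\bigr)$ (a standard consequence of $sb_1(Z_n')\to\infty$ and Esseen's inequality applied to the tilted walk $(W_s')$); the tail contribution is then at most $O(e^{-K})$ times the main term, which becomes negligible on letting $K\to\infty$ at the end.
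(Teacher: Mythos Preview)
Your argument is correct and follows essentially the same route as the paper: cycle lemma, then Lemma~\ref{llt} via Lemma~\ref{smalltilt}, then sum the resulting $s^{-3/2}e^{-\delta s}$ tail. The two differences are minor but worth noting.

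First, the paper applies Spitzer's lemma only in the case $r=1$, obtains the asymptotic for $\Pr(\tau_1\ge t)$, and then handles general $r$ by writing $\tau_r$ as a sum of $r$ independent copies of $\tau_1$ and arguing that the dominant contribution comes from one copy being large and the rest $O(1)$. Your use of Kemperman's formula $\Pr(\tau_r=s)=\tfrac{r}{s}\Pr(W_s=-r)$ for left-continuous walks handles all $r$ in one stroke and yields the explicit constant $c_{r,Z}=r/(\sigma\sqrt{2\pi})$ directly; the paper's convolution argument gives the same constant once one unwinds it, but leaves $c_{r,Z}$ implicit.

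Second, you are more careful than the paper about the uniformity of the local limit estimate over $s\ge t$: the paper simply writes ``summing over $t$ easily gives'' and observes that the sum is dominated by the first $O(\delta^{-1})$ terms. Your split at $s_1=t+K/\delta_n$, together with the crude tilted upper bound on the tail, is a clean way to make this rigorous.
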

\begin{proof}
We start with the case $r=1$.
Here Spitzer's Lemma~\cite{Spitzer} gives $\Pr(\tau_1=t)=\Pr(W_t=-1)/t$;
indeed, given a sequence of possible values $x_1,\ldots,x_t$ of the first $t$ increments summing
to $-1$, there is exactly one cyclic permutation of $(x_1,\ldots,x_t)$
such that the walk with the permuted increments stays non-negative
up to step $t$.

Lemma~\ref{smalltilt} gives $a_n\to 0$ and $c_n=1-\Theta(\eps^2)$.
Thus $\delta=\Theta(\eps^2)$ and, writing $\tilde\sigma^2$ for the variance of $Z_n'$,
$\tilde\sigma\sim \sigma$. Lemma~\ref{llt} thus gives
\[
 \Pr(W_t=-1) \sim \frac{1}{\sigma\sqrt{2\pi t}} e^{-\delta t},
\]
whenever $t\to\infty$ with $tb_1(Z_n)\to\infty$. Hence
\begin{equation}\label{t1=}
 \Pr(\tau_1 =t) \sim t^{-3/2}\frac{1}{\sigma\sqrt{2\pi}} e^{-\delta t}.
\end{equation}
When $\delta t\to\infty$, summing over $t$ easily gives
\begin{equation}\label{t1}
 \Pr(\tau_1\ge t) \sim \delta^{-1}t^{-3/2}\frac{1}{\sigma\sqrt{2\pi}} e^{-\delta t}.
\end{equation}
Indeed, the sum is dominated by the first $O(\delta^{-1})=O(\eps^{-2})$ terms,
and in this range $t^{-3/2}$ hardly changes.

For general $r$ we simply note that $\tau_r$ is distributed as the sum of $r$
independent copies $\tau^{(1)},\ldots,\tau^{(r)}$ of $\tau_1$. 
Since $\sum_{t\ge 1}t^{-3/2}$ converges, using \eqref{t1=} it is easy
to see that the dominant contribution to $\Pr(\tau_r\ge t)$
comes from the case that one of the $\tau^{(i)}$ is large and the others are $O(1)$.
Convergence in distribution of $Z_n$ implies that for each $j$ $\Pr(\tau_1=j)$ converges
to some limit, so \eqref{tr} follows from \eqref{t1}.
\end{proof}

We are now ready to prove Theorem~\ref{th-1}.

\begin{proof}[Proof of Theorem~\ref{th-1}]
Let $Z=Z_n$ denote the distribution of $\eta-2$, recalling that $\eta$
is the degree of a vertex chosen with probability proportional to its degree.
Passing to a subsequence, we may assume that $Z_n$ converges in distribution
to some distribution $Z^*$. Nonetheless, in what follows we must work
with the actual distribution $Z_n$ rather than the limit, since
the bounds are sensitive to small changes in the distribution of $Z_n$.

Note that $Z=Z_n$ is supported on $\{-1,0,\ldots,\dmax-2\}$, and that
by \eqref{vo1} we have $\Var(Z)=\Theta(1)$.
Also, $\E Z=-\eps$, where by assumption $\eps\to 0$ and $\eps^3 n\to\infty$.
Let $\delta=\delta(Z)=-\log(c(Z))$ be defined as above, noting that
\begin{equation}\label{dl}
 \delta \sim \frac{\eps^2}{2v_0} = \Theta(\eps^2)
\end{equation}
by Lemma~\ref{smalltilt}.
Note also that $\delta$ is exactly the quantity $\delta_n$ appearing in the statement
of Theorem~\ref{th-1}.

Let $\La=\eps^3n$, recalling that $\La\to\infty$ by assumption, and set
\[
 t^+ = \delta^{-1}(\log\La-\frac{5}{2}\log\log\La +\omega)
\]
for some $\omega\to\infty$ with $\omega=o(\log\log\La)$.

Let $Z'$ be defined in the same way as $Z$, except that we first remove
the $2\dmax t^+=o(n)$ non-isolated vertices of lowest degree from our degree sequence.
Then, conditional on the first $t\le 2t^+$ steps of our process,
using \eqref{Xdiff} the distribution
of the next increment $X_{t+1}-X_t=\eta_{t+1}-2-2\newbe_{t+1}\le \eta_{t+1}-2$
is stochastically dominated by $Z'$. Let $\gamma=\eps^{3/2}/\log\La$, say,
noting that $\gamma\to 0$, $\gamma t^+\sim\eps^{-1/2}\to\infty$, and $\eps\gamma t^+\to 0$.
Define $Z^+$ by modifying the distribution of $Z'$ as follows:
Pick some $k$ such that $\Pr(Z'=k)\ge 2\gamma$, and shift
mass $\gamma$ from $k$ to $k+1$. Note that $b_1(Z^+)\ge \gamma$,
so $t^+b_1(Z^+)\to\infty$.
Also $Z^+$ stochastically dominates $Z'$,
and $Z^+$ and $Z$ may be coupled to agree with probability 
$1-p$, where
\[
 p=O(t^+/n+\gamma).
\]
Note that $\eps^{-1}t^+/n=\Theta((\log\La)/\La)=o(1)$,
and clearly $\gamma=o(\eps)$, so $p=o(\eps)$.

Considering the random walk with independent increments distributed as $Z^+$,
writing $\cC_1$ for the first component revealed by our exploration,
stochastic domination and Lemma~\ref{Ltr} give
\[
 \Pr(|\cC_1|\ge t^+) = \Pr\bb{\inf\{t:X_t=-2\}\ge t^+}
  \le (1+o(1))c(\delta^+)^{-1}(t^+)^{-3/2} e^{-\delta^+t^+},
\]
where $\delta^+$ is defined as $\delta$, but using $Z^+$ in place of $Z$,
and $c=c_{2,Z^*}$ is a positive constant.

Lemma~\ref{smalltilt} gives $\delta^+\sim \delta$, 
and indeed $|\delta^+-\delta|=O(\eps p)$.
Thus $|\delta^+t^+-\delta t^+|=O(\eps p t^+)$, which is easily seen to be $o(1)$.
Thus the bound above can be written more simply as
\begin{eqnarray*}
 \Pr(|\cC_1|\ge t^+) &\le& (1+o(1)) c \delta^{-1}(t^+)^{-3/2} e^{-\delta t^+} \\
 &=& c \frac{t^+}{n} n\delta^{-1}(t^+)^{-5/2}  \La^{-1}(\log\La)^{5/2} e^{-\omega} \\
 &\sim& c \frac{t^+}{n} (2 v_0)^{-3/2} e^{-\omega} \\
 &=& \frac{t^+}{n} \Theta(e^{-\omega}) = o(t^+/n).
\end{eqnarray*}
If our graph $G_{\vd}^\mm$ contains a component of order at least $t^+$,
then the probability that we explore this component first is at least $(2t^+-2)/(\dmax n)=\Theta(t^+/n)$.
It follows that $\Pr(L_1\ge t^+)=o(1)$, proving the upper bound in \eqref{L1-}.

Turning to the lower bound, we use stochastic domination in the other direction.
This time we must account for back-edges. At a given step $t\le 2t^+$,
the (conditional, given the history) probability of forming a back-edge
is $O(t^+/n)$, simply because there can only be $O(t^+)$ active stubs.
It follows that we can define a distribution $Z^-$
that may be coupled to agree with $Z$ with probability $1-O(t^+/n+\gamma)$
so that the conditional distribution of $X_{t+1}-X_t$ stochastically
dominates $Z^-$ whenever $t\le 2t^+$. Setting
\begin{equation}\label{t-def}
  t^- = \delta^{-1}(\log\La-\frac{5}{2}\log\log\La -\omega),
\end{equation}
the argument above adapts easily to prove that
\[
 \Pr(|\cC_1|\ge t^-) \sim c\frac{t^-}{n}(2 v_0)^{-3/2}e^\omega.
\]
Let $I=[t^-,t^+]$ and write $t=(t^-+t^+)/2$, say.
Noting that $t^-\sim t^+\sim t$, the bounds above combine to give
$\Pr(|\cC_1|\in I)\sim c' e^{\omega}t/n$, where $c'=c(2 v_0)^{-3/2}$.
Let $N$ denote the number of components $\cC$ with $|\cC|\in I$.
It is easy to check that with high probability no such component
will have significantly more than $|\cC|$ edges.
Since our initial vertex is chosen with probability proportional to its degree,
it follows that
$\Pr(|\cC_1|\in I) \sim (\E N )2t/(\mu_1 n)$, where $\mu_1$ is the overall average degree.
Hence
\[
 \E N\sim c'\mu_1 e^{\omega}/2 \to\infty.
\]

Finally, with $\cC_2$ the second component explored by our process, we have
\begin{equation}\label{cc2}
 \Pr(|\cC_1|, |\cC_2|\in I) \sim \E(N(N-1)) (2t/(\mu_1 n))^2.
\end{equation}
The estimates above apply just as well to bound $\Pr(|\cC_2|\in I\mid |\cC_1|\in I)$:
throughout, we only needed that at most $2t^+$ vertices had been `used up'.
We find that the left-hand side in \eqref{cc2} is asymptotically $(c' e^\omega t/n)^2$,
so it follows that
\[
 \E(N(N-1)) \sim (c'\mu_1 e^{\omega}/2)^2 \sim (\E N)^2.
\]
Since $\E N\to\infty$, this gives $\E(N^2)\sim (\E N)^2$, so Chebyshev's inequality
implies that $\Pr(N>0)\to 1$, completing the proof of \eqref{L1-}.

The argument for \eqref{L1-+} is essentially the same; we simply replace $-\omega$ by
$+x$ in the definition \eqref{t-def} of $t^-$. With $N$ the number of components
with order between this new $t^-$ and $t^+$ it follows as above that $\E N\sim \alpha= c'\mu_1 e^{-x}/2$.
Moreover, arguing as for $N(N-1)$ above, for each fixed $r$ the $r$th factorial moment
of $N$ converges to $\alpha^r$. It follows by standard results that $N$ converges
in distribution to a Poisson distribution with mean $\alpha$, so $\Pr(N=0)\to e^{-\alpha}$.
Note that the constant
$c$ in \eqref{L1-+} is
\begin{equation}\label{cform}
 c= c'\mu_1/2=c_{2,Z^*}(2v_0)^{-3/2}\mu_1/2 \sim c_{2,Z^*} 2^{-5/2}\mu_1^{5/2}\mu_3^{-3/2},
\end{equation}
with $c_{2,Z^*}$ as in Lemma~\ref{Ltr}.
\end{proof}

\begin{ack}
The author would like to thank B\'ela Bollob\'as for many helpful discussions,
as well as for the invitation to visit the University of Memphis, and Paul
Balister for suggesting the use of `tilting' in the proof of Lemma~\ref{llt}.
\end{ack}

\end{document}